\documentclass[12pt,dvipsnames]{amsart}
\usepackage[colorlinks=true, pdfstartview=FitV, linkcolor=blue, citecolor=blue, urlcolor=blue]{hyperref}
\usepackage{geometry,pb-diagram}
\geometry{letterpaper}                   
\usepackage{amssymb, amscd, amsmath, amsfonts, amsthm}
\usepackage{stmaryrd}
\usepackage{verbatim}
\usepackage{multicol}

\newcommand{\idiot}[1]{\vspace{5 mm}\par \noindent
\marginpar{\textsc{For longer version}}
\framebox{\begin{minipage}[c]{.99 \textwidth}
#1 \end{minipage}}\vspace{5 mm}\par}

\newcommand{\rant}[1]{\vspace{5 mm}\par \noindent
\marginpar{\textsc{Comments}}
\framebox{\begin{minipage}[c]{.99 \textwidth}
\tt #1 \end{minipage}}\vspace{5 mm}\par}

\renewcommand{\idiot}[1]{}
\renewcommand{\rant}[1]{}

\def\unprotectedboldentry#1{\textcolor{Red}{\textbf{#1}}}
\def\boldentry{\protect\unprotectedboldentry}
\usepackage{tikz}
\usetikzlibrary{calc}
\usepackage{ifthen}
\newcommand{\tikztableau}[2][scale=0.6,every node/.style={font=\small}]{
    \def\newtableau{#2}
    \begin{array}{c}
    \begin{tikzpicture}[#1]
    \coordinate (x) at (-0.5,0.5);
    \coordinate (y) at (-0.5,0.5);
    \foreach \row in \newtableau {
        \coordinate (x) at ($(x)-(0,1)$);
        \coordinate (y) at (x);
        \foreach \entry in \row {
            \ifthenelse{\equal{\entry}{X}}
               {
                \node (y) at ($(y) + (1,0)$) {};
                \fill[color=gray!10] ($(y)-(0.5,0.5)$) rectangle +(1,1);
                \draw[color=gray] ($(y)-(0.5,0.5)$) rectangle +(1,1);
               }
               {
                \ifthenelse{\equal{\entry}{\boldentry X}}
                   {
                    \node (y) at ($(y) + (1,0)$) {};
                    \fill[color=gray] ($(y)-(0.5,0.5)$) rectangle +(1,1);
                    \draw ($(y)-(0.5,0.5)$) rectangle +(1,1);
                   }
                   {
                    \node (y) at ($(y) + (1,0)$) {\entry};
                    \draw ($(y)-(0.5,0.5)$) rectangle +(1,1);
                   }
               }
            }
        }
    \end{tikzpicture}
    \end{array}}
\newcommand{\tikztableausmall}[1]{\tikztableau[scale=0.45,every node/.style={font=\rm\small}]{#1}}


\def\ZZ{\mathbb{Z}}
\def\NN{\mathbb{N}}
\def\BB{\mathbb{B}}
\def\QQ{\mathcal{Q}}
\def\B{{\bf B}}

\def\content{\operatorname{content}}

\def\sym{\operatorname{\mathsf{Sym}}}

\def\Qsym{\operatorname{\mathsf{QSym}}}

\def \fS{{\mathfrak S}}
\def \HH{{H}}
\def\Nsym{\operatorname{\mathsf{NSym}}}
\def\OM{{\tilde \Omega}}
\def\coeff{{\Big|}}
\def\EEE{{\mathcal E}}
\def\HHH{{\mathcal H}}
\def\ccc{{\bf c}}
\newcommand\binomial[2]{\begin{pmatrix}#1\\#2\end{pmatrix}}
\def\sort{\operatorname{sort}}
\def\true{\operatorname{true}}
\def\false{\operatorname{false}}

\newtheorem{Theorem}{Theorem}[section]
\newtheorem{Proposition}[Theorem]{Proposition}
\newtheorem{Corollary}[Theorem]{Corollary}
\newtheorem{Lemma}[Theorem]{Lemma}
\newtheorem{Example}[Theorem]{Example}
\theoremstyle{definition}
\newtheorem{Remark}[Theorem]{Remark}
\newtheorem{Definition}[Theorem]{Definition}
\newtheorem{Conjecture}[Theorem]{Conjecture}

\begin{document}

\title[New bases for the non-commutative symmetric functions]{A lift of the Schur and Hall-Littlewood bases to non-commutative symmetric functions}
\author[C. Berg \and N. Bergeron \and F. Saliola \and L. Serrano \and M. Zabrocki]{Chris Berg$^2$ \and Nantel Bergeron$^{1,3}$ \and Franco Saliola$^2$ \and Luis Serrano$^2$ \and Mike Zabrocki$^{1,3}$}
\address[1]{Fields Institute\\ Toronto, ON, Canada}
\address[2]{Universit\'e du Qu\'ebec \`a Montr\'eal, Montr\'eal, QC, Canada}
\address[3]{York University\\ Toronto, ON, Canada}

\date{\today}

\begin{abstract} We introduce a new basis of the algebra of non-commutative symmetric functions whose images under the forgetful map  are Schur functions when indexed by a partition. Dually, we build a basis of the quasi-symmetric functions which expand positively in the fundamental quasi-symmetric functions.
We then use the basis to construct a non-commutative lift of the Hall-Littlewood symmetric functions with similar properties to their commutative counterparts.
\end{abstract}

\maketitle
\setcounter{tocdepth}{3}
\tableofcontents

\section{Introduction - Yet another Schur-like basis of $\Nsym$}

The algebras of non-commutative symmetric functions $\Nsym$ and quasi-symmetric
functions $\Qsym$ are dual Hopf algebras. They have been of great importance to
algebraic combinatorics.
As seen in \cite{ABS}, they are universal in the category of combinatorial Hopf algebras.
They also represent the Grothendieck rings for the finitely generated
projective representations and the finite dimensional representation theory of
the $0$-Hecke algebra. We will not attempt to summarize these notions in great
detail; the interested reader should see \cite{KT}.

This paper is the result of an exploration of what a `Schur' analogue in the
non-commutative setting should look like. We focused on a minimal set of axioms
that define a basis of $\Nsym$ whose image under the forgetful map is a Schur
function when the basis element is indexed by a partition.

The primary goal in this paper is to build a new basis, the \textit{``immaculate basis,''}
of $\Nsym$ and to develop its theory.
This basis has many of the same properties of the classical basis of Schur functions of the symmetric function algebra. 
The immaculate basis has a positive right-Pieri rule (Theorem \ref{thm:Pieri}), a simple Jacobi-Trudi formula (Theorem \ref{thm:JT}), and a creation operator construction (Definition \ref{def:immaculate}). Furthermore, under the forgetful map $\chi$ from the non-commutative symmetric functions to the symmetric functions (Equation \eqref{def:chi}) their image is a Jacobi-Trudi expression for a Schur function (Corollary \ref{thm:projection}).
Thus, immaculate functions map to Schur functions using a {\sl signed sorting} action (Proposition \ref{prop:schurcomposition}).
By duality, these functions give rise to a basis of the quasi-symmetric function algebra and expand positively into the monomial and fundamental bases of quasi-symmetric functions (Propositions \ref{prop:MonomialPositive} and \ref{prop:FundamentalPositive}).  We also give a combinatorial expansion of the Schur functions (Theorem \ref{thm:decompose}) in this basis. 

In the commutative setting there are many characterizing properties of the Schur functions (e.g. triangularity
with other bases, a Pieri rule, uniquely determined by an orthonormalization process). 
We immediately focused on the concept of a Jacobi-Trudi identity defining a basis of $\Nsym$.
The reader interested only in the definition of our basis may wish to take the Jacobi-Trudi rule 
(Theorem \ref{thm:JT}) as the definition of the immaculate basis. However, to simplify the proofs and provide a coherent story, we started with the functions being defined through creation operators. Since they are defined through the process of creation (applying certain creation operators to the identity), we decided to name them the ``immaculately conceived basis,'' which we have shortened to the immaculate basis. 

Recently, many of $\Nsym$'s enthusiasts have developed bases for the algebra which have various properties in common with different  classical bases of $\sym$. The ``immaculate'' term within this paper is intended to be humorous; our basis shares many of the properties of the Schur basis of $\sym$, but is in no way a perfect analogue of the Schur basis (for instance, products of immaculate functions do not expand positively in the immaculate basis). We have no supportive evidence to believe that any basis of $\Nsym$ will ever be a perfect analogue of the Schur basis.

It should be noted that this basis is not the non-commutative Schur basis 
(dual to the quasi-symmetric Schur basis of \cite{HLMvW11a, HLMvW11b, BLvW}), 
even though they share several similar properties.  The non-commutative Schur basis 
has the property that the image under the map $\chi$ of an element indexed by a composition is 
a Schur function indexed by the parts of the composition sorted in decreasing order.  The basis studied here has
the property that an element indexed by a composition is sent to a Jacobi-Trudi  determinant expression (Theorem \ref{thm:JT} and
Corollary \ref{thm:projection}).
We have not developed any connections at this point between the non-commutative Schur basis and our immaculate basis.

Starting with the immaculate basis, we construct lifts of the Hall-Littlewood symmetric functions in the non-commutative symmetric function algebra. It should be noted that many different versions of a non-commutative Hall-Littlewood symmetric function already exist \cite{BergeronZabrocki, Hivert, LascouxNovelliThibon, NovelliThibonWilliams, Tevlin}, but as far as we are aware, none of these project for all partitions to the classical Hall-Littlewood functions (ours do!). Having a basis of $\Nsym$ which projects to the Hall-Littlewood basis could prove to be a powerful tool towards their study; fundamental problems in the classical theory of Hall-Littlewood symmetric functions, like a combinatorial understanding of their structure coefficients, remain open.

The results of this paper are mostly combinatorial and algebraic. In a future
paper we turn our attention to the representation theoretic interpretation of
our basis \cite{BBSSZ1}. We construct indecomposable modules of the $0$-Hecke
algebra whose characteristics, under the identification with elements of
$\Qsym$, form the dual basis to the immaculate basis.

\subsection{Acknowledgments}
This work is supported in part by NSERC.
It is partially the result of a working session at the Algebraic
Combinatorics Seminar at the Fields Institute with the active
participation of C.~Benedetti, Z.~Chen, T.~Denton, H.~Heglin, and D.~Mazur. 

This research was facilitated by computer exploration using the open-source
mathematical software \texttt{Sage}~\cite{sage} and its algebraic
combinatorics features developed by the \texttt{Sage-Combinat}
community~\cite{sage-co}.

In addition, the authors would like to thank Florent Hivert, Jia Huang, Jeff Remmel,
Nicolas Thi\'ery and Martha Yip for lively discussions, and Darij Grinberg for several interesting mathematical and expository remarks.

The first and fourth authors are also grateful to Christophe Hohlweg for introducing us to the non-commutative symmetric functions at a working seminar at LaCIM in Montr\'eal.

The authors would like to thank the reviewers for their helpful comments and suggestions.
\section{Background}

\subsection{Compositions and combinatorics} \label{sec:compositions}

A \textit{partition} of a non-negative integer $n$ is a tuple
$\lambda = [\lambda_1, \lambda_2, \dots, \lambda_m]$ of positive integers satisfying 
$\lambda_1 \geq \lambda_2 \geq \cdots \geq \lambda_m$ which sum to $n$; it is denoted 
$\lambda \vdash n$. Partitions are of particular importance to 
algebraic combinatorics; among other things, partitions of $n$ index a basis 
for the symmetric functions of degree $n$, $\sym_n$, and the character ring 
for the representations of the symmetric group. These concepts are intimately 
connected; we assume the reader is well versed in this area (see for instance \cite{Sagan} for background details).

A \textit{composition} of a non-negative integer $n$ is a tuple 
$\alpha = [\alpha_1, \alpha_2, \dots, \alpha_m]$ of positive 
integers which sum to $n$, often written $\alpha \models n$.
The entries $\alpha_i$ of the composition are referred to as the parts
of the composition.  The size of the composition is the sum of the parts
and will be denoted $|\alpha|:=n$.  The length of the composition is the
number of parts and will be denoted $\ell(\alpha):=m$.
In this paper we study 
dual graded Hopf algebras whose bases at level $n$ are indexed by compositions of $n$.

Compositions of $n$ are in bijection with subsets of $\{1, 2, \dots, n-1\}$. We will follow the convention of identifying $\alpha = [\alpha_1, \alpha_2, \dots, \alpha_m]$ with the subset $D(\alpha) = 
\{\alpha_1, \alpha_1+\alpha_2, \alpha_1+\alpha_2 + \alpha_3, \dots, \alpha_1+\alpha_2+\dots + \alpha_{m-1} \}$. 

If $\alpha$ and
$\beta$ are both compositions of $n$,  say that $\alpha \leq  \beta$ in refinement order if $D(\beta) \subseteq D(\alpha)$. For instance, $[1,1,2,1,3,2,1,4,2] \leq [4,4,2,7]$, since $D([1,1,2,1,3,2,1,4,2]) = \{1,2,4,5,8,10,11,15\}$ and $D([4,4,2,7]) = \{4,8,10\}$.

We  introduce  a  new  notion  which  will  arise  in  our  
Pieri  rule  (Theorem  \ref{thm:Pieri});  we  say  that  $\alpha  \subset_{i}  \beta$  if:
\begin{enumerate}
\item $|\beta| = |\alpha| + i$,
\item $\alpha_j \leq \beta_j$ for all $1 \leq j \leq \ell(\alpha)$,
\item $\ell(\beta) \leq \ell(\alpha) + 1.$
\end{enumerate}
For a composition $\alpha = [\alpha_1, \alpha_2, \dots, \alpha_\ell]$ and a positive integer $m$, we let $[m, \alpha]$ stand for the composition $[m, \alpha_1, \alpha_2, \dots, \alpha_\ell]$.

In this presentation, compositions will be represented as diagrams of left aligned rows of cells.
The combinatorics of the elements that we introduce will lead us to represent our diagrams in this
way rather than as a ribbon (as is the usual method for representing compositions when working with
the ribbon Schur and fundamental bases).  We will also use the matrix convention (`English' notation)
that the first row of the diagram is at the top and the last row is at the bottom.  For example, the composition
$[4,1,3,1,6,2]$ is represented as
\[ \tikztableausmall{{X, X, X, X},{X}, {X, X, X}, {X}, {X,X,X,X,X,X}, {X, X}}~.
\]

Note that in examples and in a few formulas, compositions used as subscripts indexing elements of an algebra
will often be written without enclosing brackets.

\subsection{Schur functions and creation operators}

We use the standard notation for the common bases of $\sym$:
$h_\lambda$ for complete homogeneous;
$e_\lambda$ for elementary;
$m_\lambda$ for monomial;
$p_\lambda$ for power sums;
$s_\lambda$ for Schur.
For simplicity, we let $h_i$, $e_i$, $m_i$, $p_i$ and $s_i$ denote the
corresponding generators indexed by the partition $[i]$.

We next define a Schur function indexed by an arbitrary tuple of integers.
The family of symmetric functions indexed by partitions $\lambda$ are the usual
Schur basis of the symmetric functions.

\begin{Definition} \label{def:JTformula}
For an arbitrary integer tuple 
$\alpha = [\alpha_1, \alpha_2, \ldots, \alpha_\ell] \in \ZZ^\ell$, we define 
\[ s_\alpha := \det \begin{bmatrix} 
h_{\alpha_1}&h_{\alpha_1+1}&\cdots&h_{\alpha_1+\ell-1}\\
h_{\alpha_2-1}&h_{\alpha_2}&\cdots&h_{\alpha_2+\ell-2}\\
\vdots&\vdots&\ddots&\vdots\\
h_{\alpha_\ell-\ell+1}& h_{\alpha_\ell-\ell+2}&\cdots&h_{\alpha_\ell}\\
\end{bmatrix} = 
\det [h_{\alpha_i + j - i}]_{1 \leq i,j \leq \ell}\]
where we use the convention that $h_0 = 1$ and $h_{-m} = 0$ for $m>0$.
\end{Definition}

With this definition, we notice that switching two adjacent rows of the defining matrix
has the effect of changing the sign of the determinant.  
Switching rows of the matrix implies that we have the following equality:
$$s_{\alpha_1, \alpha_2, \ldots, \alpha_r, \alpha_{r+1}, \ldots, \alpha_\ell} =
-s_{\alpha_1, \alpha_2, \ldots, \alpha_{r+1}-1,\alpha_r+1, \ldots, \alpha_\ell} ~.
$$

Two rows of the matrix are equal if $\alpha_i - i = \alpha_j-j$.  This implies
part of the following well--known result.

\begin{Proposition} \label{prop:schurcomposition}
If $\alpha$ is a composition of $n$ with length equal to $k$, then
$s_\alpha = 0$ if and only if there exists $i,j \in \{ 1,2, \ldots,k\}$ with $i \neq j$ 
such that $\alpha_i -i = \alpha_j-j$. If this is not the case, then there is a unique permutation $\sigma$ such that $[\alpha_{\sigma_1} +1-\sigma_1, \alpha_{\sigma_2}+2-\sigma_2, \ldots, \alpha_{\sigma_k}+k-\sigma_k]$
is a partition.  In this case,
$$s_\alpha = (-1)^\sigma s_{\alpha_{\sigma_1} +1-\sigma_1, \alpha_{\sigma_2}+2-\sigma_2, \ldots, \alpha_{\sigma_k}+k-\sigma_k}~.$$
\end{Proposition}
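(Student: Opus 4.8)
The plan is to prove the statement in two halves, treating the vanishing criterion and the straightening formula separately but using the same underlying mechanism: row operations on the Jacobi-Trudi matrix $[h_{\alpha_i + j - i}]_{1 \leq i,j \leq \ell}$.

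First I would handle the ``if'' direction of the vanishing criterion. Suppose $\alpha_i - i = \alpha_j - j$ for some $i \neq j$. Then for every column index $c \in \{1, \dots, k\}$ the $(i,c)$ entry is $h_{\alpha_i + c - i}$ and the $(j,c)$ entry is $h_{\alpha_j + c - j} = h_{\alpha_i + c - i}$ (using $\alpha_j - j = \alpha_i - i$), so rows $i$ and $j$ of the matrix are literally identical. A determinant with two equal rows is zero, hence $s_\alpha = 0$. This is exactly the observation already flagged in the paragraph preceding Proposition~\ref{prop:schurcomposition}, so it requires no real work.

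Next, the ``only if'' direction together with the straightening formula. Assume the values $\alpha_i - i$ for $i = 1, \dots, k$ are pairwise distinct. The key point is that these $k$ distinct integers can be sorted into strictly decreasing order: there is a unique permutation $\sigma \in \fS_k$ such that $\alpha_{\sigma_1} - \sigma_1 > \alpha_{\sigma_2} - \sigma_2 > \cdots > \alpha_{\sigma_k} - \sigma_k$. Set $\mu_r := \alpha_{\sigma_r} + r - \sigma_r$. Then $\mu_r - \mu_{r+1} = (\alpha_{\sigma_r} - \sigma_r) - (\alpha_{\sigma_{r+1}} - \sigma_{r+1}) - 1 + \ldots$ — more carefully, $\mu_r - \mu_{r+1} = (\alpha_{\sigma_r} - \sigma_r) - (\alpha_{\sigma_{r+1}} - \sigma_{r+1}) + (r - (r+1)) + \ldots$; let me just observe $\mu_r - \mu_{r+1} = \big((\alpha_{\sigma_r} - \sigma_r) - (\alpha_{\sigma_{r+1}} - \sigma_{r+1})\big) - 1 + 2 = (\alpha_{\sigma_r} - \sigma_r) - (\alpha_{\sigma_{r+1}} - \sigma_{r+1}) + 1 > 0$, wait I should be careful: from $\mu_r = \alpha_{\sigma_r} + r - \sigma_r$ we get $\mu_r - \mu_{r+1} = (\alpha_{\sigma_r} - \alpha_{\sigma_{r+1}}) + (r - (r+1)) - (\sigma_r - \sigma_{r+1}) = (\alpha_{\sigma_r} - \sigma_r) - (\alpha_{\sigma_{r+1}} - \sigma_{r+1}) - 1$. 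Hmm, so strict decrease of $\alpha_{\sigma_r} - \sigma_r$ gives $\mu_r - \mu_{r+1} \geq 0$, i.e.\ $\mu_1 \geq \mu_2 \geq \cdots \geq \mu_k$. That is precisely a (weakly decreasing) partition entry tuple; and this is consistent with the claim since the claimed output $[\alpha_{\sigma_1}+1-\sigma_1, \dots, \alpha_{\sigma_k}+k-\sigma_k]$ is $\mu$. I would then argue: reordering the rows of the Jacobi-Trudi matrix according to $\sigma$ multiplies the determinant by $(-1)^{\sigma}$, and the reordered matrix is exactly the Jacobi-Trudi matrix for the tuple $\mu$; this is just the identity $s_\alpha = (-1)^\sigma s_{\mu}$ from Definition~\ref{def:JTformula} applied repeatedly, equivalently the adjacent-transposition identity $s_{\ldots \alpha_r, \alpha_{r+1} \ldots} = -s_{\ldots \alpha_{r+1} - 1, \alpha_r + 1 \ldots}$ composed over a reduced word for $\sigma$ (one checks the transposition identity permutes the shifted values $\alpha_i - i$ by the corresponding simple transposition, so composing along any word for $\sigma$ lands at $\mu$ and the sign is $(-1)^{\ell(\sigma)} = (-1)^\sigma$). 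Finally, $s_\alpha = 0$ can now only happen if $s_\mu = 0$; but for a weakly decreasing tuple $\mu$ with no two entries equal — which holds because the $\mu_r$ are obtained from the strict inequalities, hence $\mu_1 > \mu_2 > \cdots$, wait I need $\mu_r > \mu_{r+1}$: from $\mu_r - \mu_{r+1} = (\alpha_{\sigma_r} - \sigma_r) - (\alpha_{\sigma_{r+1}} - \sigma_{r+1}) - 1$ and strict decrease by at least $1$ of the integer values we only get $\geq 0$. So $\mu$ is a genuine partition (possibly with repeated parts); if some $\mu_r < 0$ then... actually if $\mu_r \geq 0$ for all $r$ it is a partition of $n$ and $s_\mu \neq 0$ is the classical Schur function; if some $\mu_r < 0$, then since $\mu$ is weakly decreasing the last entry $\mu_k < 0$, and expanding the determinant along the last row (all entries $h_{\mu_k + j - k}$ with $\mu_k - k + j \leq \mu_k - k + k = \mu_k < 0$ when... no, $j$ ranges up to $k$ so the largest is $h_{\mu_k}$ which is $0$ since $\mu_k<0$) — actually every entry in the last row is $h_{\mu_k + j - k}$ for $j \le k$, so the largest index is $\mu_k \le -1 <0$, all such $h$ are zero, so $s_\mu = 0$; but wait, this would contradict the claim that $s_\alpha \neq 0$. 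Let me reconcile: the proposition says $s_\alpha \neq 0$ under the distinctness hypothesis, so I must show $\mu_k \geq 0$. Indeed $\sum \mu_r = \sum (\alpha_{\sigma_r} + r - \sigma_r) = \sum \alpha_i = n \geq 0$, and I want the weakly decreasing tuple summing to $n$ with... hmm that alone doesn't force nonnegativity. I think the correct statement is that $s_\alpha$ may still be zero — re-reading the Proposition: ``$s_\alpha = 0$ if and only if there exist $i \neq j$ with $\alpha_i - i = \alpha_j - j$. If this is not the case, then there is a unique permutation...'' — so the Proposition does claim nonvanishing. The resolution: $\alpha$ is a \emph{composition}, so all $\alpha_i \geq 1$, hence $\alpha_i - i \geq 1 - i \geq 1 - k$, and more to the point after sorting, $\mu_k = \alpha_{\sigma_k} + k - \sigma_k \geq 1 + k - k = 1 > 0$ only if $\sigma_k = k$, which need not hold. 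The right argument: $\mu$ weakly decreasing and the distinct shifted values $\alpha_i - i$ are $k$ distinct integers each $\geq 1-k$ hmm. I'll instead simply invoke that for a composition (parts $\geq 1$), $\mu$ sorts to an honest partition; the cleanest route is to note the partition $\mu$ has $\mu_r = (\text{the } r\text{-th largest of } \{\alpha_i - i\}) + r$, and since the $k$ values $\alpha_i-i$ are distinct integers with minimum $\geq \min_i(\alpha_i - i)$, the $r$-th largest is $\geq (\text{min}) + (k - r) \geq (1-k) + (k-r) = 1-r$, so $\mu_r \geq 1 > 0$. Hence $\mu$ is a partition of $n$ into \emph{positive} parts (I'd double check the repeated-part case is excluded by distinctness of $\alpha_i - i$ giving $\mu_1 > \cdots > \mu_k$ — wait that needs strict, and I showed only $\geq$; but distinct integers sorted strictly decreasing do satisfy $(\alpha_{\sigma_r}-\sigma_r) - (\alpha_{\sigma_{r+1}}-\sigma_{r+1}) \geq 1$, giving $\mu_r - \mu_{r+1} \geq 0$, so parts may repeat — that's fine, $s_\mu$ for a partition with repeated parts is still a nonzero Schur function). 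Therefore $s_\mu \neq 0$ by the classical theory, so $s_\alpha = (-1)^\sigma s_\mu \neq 0$, completing the ``only if'' direction and simultaneously exhibiting the straightening formula.

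The main obstacle I anticipate is the bookkeeping in the straightening step: one must verify that the sign picked up under an arbitrary sequence of adjacent row swaps is independent of the chosen reduced word and equals $(-1)^\sigma$, and that the shifted-parameter tuple genuinely transforms by the corresponding permutation of $\fS_k$ acting on $\{\alpha_i - i\}$ — this is a standard but slightly fussy ``sorting with signs'' argument, best phrased either via the universal property of the sign character on $\fS_k$ or by a direct induction on $\ell(\sigma)$ using the adjacent-transposition identity already recorded after Definition~\ref{def:JTformula}. Everything else (equal rows $\Rightarrow$ zero; positivity of the sorted parts using that $\alpha$ is a composition; nonvanishing of a genuine Schur function) is routine or classical.
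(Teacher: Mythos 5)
Your argument is correct and follows exactly the route the paper itself intends: the paper presents this Proposition as a well-known consequence of the two observations stated just before it (swapping adjacent rows of the Jacobi--Trudi matrix negates the determinant, and $\alpha_i - i = \alpha_j - j$ forces two equal rows), and your write-up simply carries that sketch out in full, including the useful check that the sorted tuple $\mu_r = \alpha_{\sigma_r} + r - \sigma_r$ has all parts $\geq 1$ because $\alpha$ is a composition, so $s_\mu$ is a genuine (nonzero) Schur function. The only point worth adding a line for is the uniqueness claim: any permutation $\sigma$ making $[\alpha_{\sigma_1}+1-\sigma_1, \ldots, \alpha_{\sigma_k}+k-\sigma_k]$ weakly decreasing must arrange the distinct values $\alpha_i - i$ in strictly decreasing order, hence must coincide with your sorting permutation.
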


$\sym$ is a self dual Hopf algebra. It has a pairing (the Hall scalar product)
defined by 
$$\langle h_\lambda, m_\mu \rangle =
\langle s_\lambda, s_\mu \rangle = \delta_{\lambda, \mu}~.$$
An element $f \in \sym$ gives rise to an operator $f^\perp: \sym \to \sym$ 
according to the relation:
\[ \langle fg,h \rangle = \langle g, f^\perp h \rangle \hspace{.1in} \textrm{ for all } g, h \in \sym.\]
Using this as a definition, the action of the operator $f^\perp$ can be 
calculated on another symmetric function by the formula
$$f^\perp(g) = \sum_{\lambda} \langle g, f a_\lambda \rangle b_\lambda$$
where $\{ a_\lambda \}_{\lambda}$ and $\{ b_\lambda \}_\lambda$ are any two
bases which are dual with respect to the pairing $\langle\cdot,\cdot\rangle$.

We define a ``creation'' operator $\B_m: \sym_n \to \sym_{m+n}$ by: \[\B_m := \sum_{i \geq 0} (-1)^i h_{m+i} e_i^\perp.\]

The following theorem, which states that creation operators construct Schur functions, will become one of the motivations 
for our new basis of $\Nsym$ (see Definition \ref{def:immaculate}).

\begin{Theorem} (Bernstein \cite[pg 69-70]{Zel}) \label{th:bern}
For all tuples $\alpha \in \ZZ^m$, 
\[ s_\alpha = \B_{\alpha_1} \B_{\alpha_2} \cdots \B_{\alpha_{m}}(1).\]
\end{Theorem}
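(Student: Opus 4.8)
The plan is to proceed by induction on $m$, the length of the tuple $\alpha$. For the base case $m=0$ (or $m=1$) we need $\B_{\alpha_1}(1) = s_{\alpha_1}$, which follows directly from the definition of $\B_m$: since $e_i^\perp(1) = 0$ for $i>0$, only the $i=0$ term survives, giving $\B_{\alpha_1}(1) = h_{\alpha_1} = s_{\alpha_1}$. For the inductive step, assume the identity holds for tuples of length $m-1$; it then suffices to establish the single-step recursion
\[
\B_{\alpha_1}(s_{\alpha_2,\ldots,\alpha_m}) = s_{\alpha_1,\alpha_2,\ldots,\alpha_m}.
\]
Expanding the left side via $\B_{\alpha_1} = \sum_{i\geq 0}(-1)^i h_{\alpha_1+i} e_i^\perp$ and the Jacobi--Trudi determinant (Definition \ref{def:JTformula}) for $s_{\alpha_2,\ldots,\alpha_m}$, I would compute $e_i^\perp$ applied to that determinant.

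The key computational ingredient is understanding how $e_i^\perp$ acts on a Jacobi--Trudi determinant, or equivalently on products $h_{\beta_1}\cdots h_{\beta_{m-1}}$. Using the coproduct formula $e_i^\perp(fg) = \sum_{a+b=i} (e_a^\perp f)(e_b^\perp g)$ together with $e_a^\perp(h_r) = h_{r-a}$ (which itself follows from $\langle e_a h_s, m_\mu\rangle$ computations, or from the dual Pieri rule), one finds that $e_i^\perp$ acting on a product of complete homogeneous functions distributes as a sum over ways of decrementing the indices with total decrement $i$. Carrying this through the $(m-1)\times(m-1)$ Jacobi--Trudi determinant for $s_{\alpha_2,\ldots,\alpha_m}$, the alternating sum $\sum_i (-1)^i h_{\alpha_1+i} e_i^\perp$ reorganizes — after prepending the row and column indexed by $\alpha_1$ — precisely into cofactor expansion along the first row of the $m\times m$ Jacobi--Trudi matrix for $s_{\alpha_1,\ldots,\alpha_m}$. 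The signs $(-1)^i$ match the cofactor signs, and the terms $h_{\alpha_1+i}$ with the decremented minors assemble the full determinant.

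The main obstacle is the bookkeeping in this last reorganization: one must check that the contributions of $e_i^\perp$ to the Laplace-type expansion of the smaller determinant, weighted by $(-1)^i h_{\alpha_1+i}$, align term-by-term with the cofactor expansion of the larger determinant, including the case-analysis when indices drop below zero (where $h_{-m}=0$ kills terms, consistent with the determinant's own vanishing conventions) and the boundary terms involving $h_0 = 1$. A cleaner route to the same end — and the one I would ultimately prefer to write up — is to work at the level of generating functions: introduce the vertex-operator generating series $\B(z) = \sum_m \B_m z^m = \bigl(\sum_r h_r z^r\bigr)\bigl(\sum_i (-1)^i e_i^\perp z^{-i}\bigr)$, establish the commutation relation $\B(z)\B(w) = -\tfrac{w}{z}\,\B(w)\B(z)$ (equivalently, the antisymmetry $\B_a\B_{b}(f) = -\B_{b-1}\B_{a+1}(f)$ matching the row-swap relation for $s_\alpha$ noted after Definition \ref{def:JTformula}), and then reduce an arbitrary tuple $\alpha$ to the partition case where $\B_{\lambda_1}\cdots\B_{\lambda_m}(1) = s_\lambda$ can be verified directly by iterating the dual Pieri rule. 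This matches the structure of Proposition \ref{prop:schurcomposition} and makes the sign tracking essentially automatic.
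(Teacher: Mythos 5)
The paper never proves Theorem \ref{th:bern}: it is quoted from Bernstein via \cite{Zel}, and the closest in-house argument is the proof of its non-commutative lift, Theorem \ref{thm:JT}. Your preferred second route is essentially the commutative shadow of that proof, and it is sound; moreover you can finish more directly than you propose. Writing $\B(z)=\bigl(\sum_{d\geq 0}h_dz^d\bigr)\bigl(\sum_{i\geq 0}(-1)^iz^{-i}e_i^\perp\bigr)$ and running the same computation as in the proof of Theorem \ref{thm:JT} (with $\HH_d$ replaced by $h_d$ and $F_{1^i}^\perp$ by $e_i^\perp$) gives $\B(z_1)\cdots\B(z_m)(1)=\bigl(\prod_{k}\sum_{d\geq 0}h_dz_k^d\bigr)\prod_{i<j}(1-z_j/z_i)$; expanding $\prod_{i<j}(1-z_j/z_i)$ by the Vandermonde identity and extracting the coefficient of $z_1^{\alpha_1}\cdots z_m^{\alpha_m}$ lands exactly on the determinant of Definition \ref{def:JTformula}, which is how the paper \emph{defines} $s_\alpha$ for an arbitrary integer tuple. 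This bypasses both the commutation relation $\B(z)\B(w)=-(w/z)\B(w)\B(z)$ and the reduction to the partition case, whose final step (``verified directly by iterating the dual Pieri rule'') you only gesture at: it genuinely requires the Pieri rule for $h_{m+i}s_\nu$, the dual Pieri rule for $e_i^\perp s_\mu$, and a sign-reversing cancellation that is valid only when $m\geq\mu_1$.

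Your first route, as written, has a concrete error in its key ingredient: $e_a^\perp(h_r)=h_{r-a}$ is false for $a\geq 2$. By the dual Pieri rule, $e_a^\perp h_r=s_{(r)/(1^a)}$, which equals $h_r$ for $a=0$, $h_{r-1}$ for $a=1$, and $0$ for $a\geq 2$; the formula you wrote is the action of $h_a^\perp$. Consequently $e_i^\perp\bigl(h_{\beta_1}\cdots h_{\beta_{m-1}}\bigr)$ is the sum over $i$-element subsets of the factors with each chosen index decremented by exactly one, not the sum over all ways of distributing a total decrement of $i$ among the factors. With your stated formula the alternating sum $\sum_i(-1)^ih_{\alpha_1+i}e_i^\perp$ does not reorganize into the first-row cofactor expansion (replacing $e_i^\perp$ by $h_i^\perp$ in $\B_{\alpha_1}$ yields a different operator for which the identity is false), so that route fails as described; with the corrected action of $e_i^\perp$ it becomes the classical inductive proof, and the bookkeeping you outline can then be carried through.
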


Because of this result we shall refer to the $\B_m$ operators as either creation or Bernstein operators.

\subsection{Non-commutative symmetric functions}

The algebra of non-commutative symmetric functions $\Nsym$ is a non-commutative analogue of $\sym$ that arises by
considering an algebra with one non-commutative generator at each positive
degree.  In addition to the relationship with the symmetric functions,
this  algebra  has  links  to  Solomon's  descent  algebra  in  type  $A$  \cite{MR},
the  algebra  of  quasi-symmetric  functions  \cite{MR},  and representation theory
of  the  type  $A$  Hecke  algebra  at  $q=0$  \cite{KT}.  It is an example
of a combinatorial Hopf algebra \cite{ABS}.  While we will follow the foundational
results  and  definitions  from  references  such  as  \cite{GKLLRT,MR},  we  have  chosen
to use notation here which is suggestive of analogous results in $\sym$.

We define $\Nsym$ as the algebra with generators $\{\HH_1, \HH_2, \dots \}$ and 
no relations. Each generator $H_i$ is defined to be of degree $i$, 
giving $\Nsym$ the structure of a graded algebra. We let $\Nsym_n$ denote the 
graded component of $\Nsym$ of degree $n$. A basis for $\Nsym_n$ are the 
\textit{complete homogeneous functions} 
$\{\HH_\alpha := \HH_{\alpha_1} \HH_{\alpha_2} \cdots \HH_{\alpha_m}\}_{\alpha \vDash n}$ 
indexed by compositions of $n$.  To make this convention consistent, 
some formulas will use expressions that have $H$ indexed by tuples of integers
and we use the convention that $\HH_0=1$ and $\HH_{-r} = 0$ for $r>0$.

There exists a map (sometimes referred to as the forgetful map) which we shall
also denote $\chi: \Nsym \to \sym$ defined by sending the basis element  
$\HH_\alpha$ to the complete homogeneous symmetric function 
\begin{equation}\label{def:chi}
\chi(\HH_\alpha) := h_{\alpha_1} h_{\alpha_2} \cdots h_{\alpha_{\ell(\alpha)}} \in \sym
\end{equation}
and extended linearly to all of $\Nsym$. 
This map is a surjection onto $\sym$.

Similar to the study of $\sym$ and the ring of characters for the symmetric groups, the ring of non-commutative symmetric functions is isomorphic to the Grothendieck ring of finitely generated indecomposable projective representations of the $0$-Hecke algebra. We state this fact only as an analogy to $\sym$, we will not use it in this paper. We refer the reader to \cite{KT} for details.

The element of $\Nsym$ which corresponds to the indecomposable projective representation indexed by $\alpha$ is here denoted $R_\alpha$. The collection of $R_\alpha$ are a basis of $\Nsym$, usually called the \textit{ribbon basis} of $\Nsym$. They are defined through their expansion in the complete homogeneous basis: 
\begin{equation}\label{def:Ribbon} R_\alpha = \sum_{\beta \geq \alpha} (-1)^{\ell(\alpha)-\ell(\beta)} \HH_\beta
\hskip .2in \textrm{or equivalently} \hskip .2in \HH_\alpha = \sum_{\beta \geq \alpha} R_\beta. \end{equation}

The product expansion follows easily from the non-commutative product on the generators 
\[\HH_\alpha \HH_\beta = 
\HH_{\alpha_1, \ldots \alpha_{\ell(\alpha)},\beta_1, \ldots \beta_{\ell(\beta)}}~.\]
$\Nsym$ has a coalgebra structure, which is defined on the generators by
\[ \Delta( \HH_j ) = \sum_{i=0}^j \HH_i \otimes \HH_{j-i}~. \]
This determines the action of the coproduct on the basis $\HH_\alpha$ since the
coproduct is an algebra morphism with respect to the product. Explicitly we have:
\[\Delta( \HH_\alpha ) =
\Delta( \HH_{\alpha_1} ) \Delta( \HH_{\alpha_2} ) \cdots \Delta( \HH_{\alpha_{\ell(\alpha)}} )~.\]

\subsection{Quasi-symmetric functions}

The algebra of quasi-symmetric functions, $\Qsym$, was introduced in \cite{Ges} 
(see also subsequent
references such as \cite{GR, Sta84}) and this algebra has become a useful tool for algebraic
combinatorics since it is
an algebra which is dual to $\Nsym$ and which contains $\sym$ as a subalgebra. 

As  with  the  algebra  $\Nsym$,  the  graded  component  $\Qsym_n$  is  indexed  by  compositions  of  $n$. 
The algebra is most readily realized within the 
ring of power series of bounded degree 
$\mathbb{Q}[\![x_1, x_2, \dots]\!]$. The monomial 
quasi-symmetric function indexed by a composition $\alpha$ is defined as
\begin{equation}
    \label{monomial-qsym}
    M_\alpha = \sum_{i_1 < i_2 < \cdots < i_m} x_{i_1}^{\alpha_1} x_{i_2}^{\alpha_2} \cdots x_{i_m}^{\alpha_m}.
\end{equation}
The algebra of quasi-symmetric functions, $\Qsym$, can then be defined as the algebra with the 
monomial quasi-symmetric functions as a basis, whose multiplication is inherited as a subalgebra of $\mathbb{Q}[\![x_1, x_2, \dots]\!]$.
We define the coproduct on this basis as:
\[ \Delta(M_\alpha) = \sum_{S \subset \{1,2, \ldots, \ell(\alpha)\}} M_{\alpha_{S}} \otimes M_{\alpha_{S^c}},\]
where if $S=\{ i_1 < i_2 < \cdots < i_{|S|}\}$, then 
$\alpha_S = [\alpha_{i_1}, \alpha_{i_2}, \ldots, \alpha_{i_{|S|}}]$.

We view $\sym$ as a subalgebra of $\Qsym$. In fact, the usual monomial symmetric functions $m_\lambda \in \sym$
expand positively in the quasi-symmetric monomial functions :
\[ m_\lambda = \sum_{\sort(\alpha) = \lambda} M_\alpha,\]
where $\sort(\alpha)$ is the partition obtained by organizing the parts of $\alpha$ from the largest to the smallest.

Similar to $\Nsym$, the algebra $\Qsym$ is isomorphic to the Grothendieck ring of finite-dimensional representations of the $0$-Hecke algebra. The irreducible representations of the $0$-Hecke algebra form a basis for this ring, and under this isomorphism the irreducible representation indexed by $\alpha$ is identified with an element of $\Qsym$, the \textit{fundamental quasi-symmetric function}, denoted $F_\alpha$. The $F_\alpha$, for $\alpha \models n$, form a basis of $\Qsym_n$, and are defined by their expansion
in the monomial quasi-symmetric basis: 
\[F_\alpha = \sum_{\beta \leq \alpha} M_\beta.\]
\subsection{Identities relating non-commutative and quasi-symmetric functions} \label{sec:nsymqsymcalc}
The algebras $\Qsym$ and $\Nsym$ form graded dual Hopf algebras. The monomial basis of $\Qsym$ is dual in this context to the complete homogeneous basis of $\Nsym$, and the fundamental basis of $\Qsym$ is dual to the ribbon basis of $\Nsym$.
$\Nsym$ and $\Qsym$ have a pairing $\langle \cdot, \cdot \rangle: \Nsym \times \Qsym \to \mathbb{Q}$, defined under this duality as either $\langle \HH_\alpha, M_\beta \rangle = \delta_{\alpha, \beta}$, or $\langle R_\alpha, F_\beta \rangle = \delta_{\alpha, \beta}$.

We will generalize the operation which is dual to multiplication by a quasi-symmetric function
using this pairing.  For $F \in \Qsym$, let $F^\perp$ be the operator which acts on elements
$H \in \Nsym$ according to the relation $\langle H, F G \rangle = \langle F^\perp H, G \rangle$.  To expand 
$F^\perp(H)$, we take a basis $\{ A_\alpha \}_\alpha$ of $\Qsym$ and $\{ B_\alpha \}_\alpha$ a basis
of $\Nsym$ such that $\langle B_\alpha, A_\beta \rangle = \delta_{\alpha\beta}$, then

$$F^\perp(H) = \sum_{\alpha} \langle H, F A_\alpha \rangle B_\alpha~.$$

By the duality of the product and the coproduct structure of $\Nsym$ and $\Qsym$, 
we have for $F,G \in \Qsym$ and $H,K \in \Nsym$, that the
pairing satisfies 
$\langle H K, G \rangle = \langle H \otimes K, \Delta(G) \rangle$ and
$\langle H, F G \rangle = \langle \Delta(H), F \otimes G \rangle$.
As a consequence of the first of these two identities, we have the following Lemma.

\begin{Lemma} \label{lemma:perpcomult}
If $G \in \Qsym$ and $\Delta(G) = \sum_{i} G^{(i)} \otimes G_{(i)}$, then
for $H,K \in \Nsym$,
\[
G^\perp( H K ) = \sum_i G^{(i)\perp}(H) G_{(i)}^\perp(K)~. \]
\end{Lemma}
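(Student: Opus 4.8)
The plan is to deduce this from the duality between the product on $\Qsym$ and the coproduct on $\Nsym$, together with the coassociativity/naturality of these structures. The key point is the stated pairing identity $\langle H \otimes K, \Delta(G) \rangle = \langle HK, G \rangle$ (equivalently $\langle HK, G\rangle = \langle H\otimes K, \Delta(G)\rangle$), which is exactly the dual statement to the fact that multiplication in $\Nsym$ is dual to comultiplication in $\Qsym$.

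First I would fix $H, K \in \Nsym$ and aim to show that $G^\perp(HK)$ and $\sum_i G^{(i)\perp}(H)\, G_{(i)}^\perp(K)$ are equal as elements of $\Nsym$. Since $\Nsym$ is paired nondegenerately with $\Qsym$, it suffices to check that both sides have the same pairing against an arbitrary $F \in \Qsym$. For the left-hand side, by the defining relation of $F^\perp$ we have $\langle F, G^\perp(HK)\rangle = \langle HK, GF\rangle$ — wait, being careful about which side the $\perp$ sits on: using $\langle H', F'G'\rangle = \langle (F')^\perp H', G'\rangle$ with the roles arranged so that $\langle G^\perp(HK), F\rangle = \langle HK, GF\rangle$. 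Then apply the product-coproduct duality: $\langle HK, GF\rangle = \langle H\otimes K, \Delta(GF)\rangle = \langle H \otimes K, \Delta(G)\Delta(F)\rangle$, using that $\Delta$ is an algebra map on $\Qsym$. Expanding $\Delta(G) = \sum_i G^{(i)}\otimes G_{(i)}$ and writing $\Delta(F) = \sum_j F^{(j)}\otimes F_{(j)}$, this becomes $\sum_{i,j} \langle H, G^{(i)}F^{(j)}\rangle \langle K, G_{(i)}F_{(j)}\rangle$.

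Next, for each term I would run the $F^\perp$ relation backwards in each tensor factor: $\langle H, G^{(i)}F^{(j)}\rangle = \langle G^{(i)\perp}(H), F^{(j)}\rangle$ and $\langle K, G_{(i)}F_{(j)}\rangle = \langle G_{(i)}^\perp(K), F_{(j)}\rangle$. Summing over $j$ and reassembling via the product-coproduct duality in the other direction gives $\sum_i \langle G^{(i)\perp}(H)\otimes G_{(i)}^\perp(K), \Delta(F)\rangle = \sum_i \langle G^{(i)\perp}(H)\, G_{(i)}^\perp(K), F\rangle$. This matches $\langle \sum_i G^{(i)\perp}(H) G_{(i)}^\perp(K), F\rangle$, and since $F$ was arbitrary and the pairing is nondegenerate on $\Nsym$, the two elements coincide.

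I do not expect any serious obstacle here; this is the standard "a perp operator is a coderivation-type identity" argument. The one thing requiring care is bookkeeping of left/right conventions: the excerpt defines $F^\perp$ by $\langle H, FG\rangle = \langle F^\perp H, G\rangle$, and one must consistently track whether $G^{(i)}$ acts on $H$ and $G_{(i)}$ on $K$ (rather than the reverse), which is pinned down by the order of tensor factors in $\Delta(G) = \sum_i G^{(i)}\otimes G_{(i)}$ and in the product-coproduct duality $\langle HK, G\rangle = \langle H\otimes K, \Delta(G)\rangle$. A secondary, purely formal point is that all sums are finite because everything is graded of bounded degree, so no convergence issues arise. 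One could alternatively phrase the whole thing more slickly by noting that $G \mapsto G^\perp$ is the algebra antihomomorphism (or homomorphism, depending on convention) $\Qsym \to \End(\Nsym)$ dual to $\Delta_{\Nsym}$, and that the claimed formula is just the statement that this assignment intertwines $\Delta_{\Qsym}$ with the natural coproduct on $\End(\Nsym)$ induced by the algebra structure of $\Nsym$; but the direct pairing computation above is the most transparent route for the paper.
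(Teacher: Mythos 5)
Your proposal is correct and follows essentially the same route as the paper: expand via the defining adjointness $\langle G^\perp(HK), F\rangle = \langle HK, GF\rangle$, apply the product--coproduct duality and the fact that $\Delta$ is an algebra morphism on $\Qsym$, and then reassemble using adjointness in each tensor factor. The paper phrases this by expanding $G^\perp(HK)$ in a pair of dual bases $\{A_\alpha\}$, $\{B_\alpha\}$ rather than pairing against an arbitrary $F$ and invoking nondegeneracy, but these are the same argument in different clothing.
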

\begin{proof}
Because of the duality of the Hopf algebra structure between $\Nsym$ and $\Qsym$,
\begin{align*}
G^\perp( H K ) &= \sum_{\alpha} \langle G^\perp( H K ), A_\alpha \rangle B_\alpha
=\sum_{\alpha} \langle H K, G A_\alpha \rangle B_\alpha\\
&=\sum_{\alpha} \langle H \otimes K, \Delta(G A_\alpha) \rangle B_\alpha
=\sum_{\alpha} \langle H \otimes K, \Delta(G) \Delta(A_\alpha) \rangle B_\alpha\\
&=\sum_{\alpha} \sum_i \langle H \otimes K, (G^{(i)} \otimes G_{(i)}) \Delta(A_\alpha) \rangle B_\alpha\\
&=\sum_{\alpha} \sum_i \langle G^{(i)\perp}(H) G_{(i)}^\perp(K), A_\alpha \rangle B_\alpha~. \qedhere
\end{align*}
\end{proof}

Another way we can compute the action of $M_\alpha^\perp$ is by using the following Lemma.
\begin{Lemma}\label{lemma:Malphaperp}
For $G \in \Nsym$, if the coproduct on $G$ has the expansion
$\Delta(G) = \sum_{\gamma} \HH_\gamma \otimes G^{(\gamma)}$, then
$M_\alpha^\perp(G) = G^{(\alpha)}$~.
\end{Lemma}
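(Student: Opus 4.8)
The statement is exactly the kind of identity that falls out of the definition of $F^\perp$ together with the duality $\langle H, FG\rangle = \langle \Delta(H), F\otimes G\rangle$, so the proof will be a short direct computation; the only thing to be careful about is tracking which pairings are being used. The plan is to expand $M_\alpha^\perp(G)$ in the complete homogeneous basis of $\Nsym$ using the general formula for $F^\perp$, with the dual bases taken to be $\{\HH_\beta\}_\beta$ in $\Nsym$ and $\{M_\beta\}_\beta$ in $\Qsym$ (which are dual by the very definition of the pairing recalled in Section \ref{sec:nsymqsymcalc}).

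\textbf{Step 1.} Write
\[
M_\alpha^\perp(G) = \sum_{\beta} \langle G, M_\alpha M_\beta\rangle\, \HH_\beta ,
\]
which is the specialization of $F^\perp(H) = \sum_\alpha \langle H, F A_\alpha\rangle B_\alpha$ to $F = M_\alpha$, $H = G$, $A_\beta = M_\beta$, $B_\beta = \HH_\beta$.

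\textbf{Step 2.} Rewrite the coefficient using $\langle H, FG\rangle = \langle \Delta(H), F\otimes G\rangle$ and the hypothesis $\Delta(G) = \sum_\gamma \HH_\gamma \otimes G^{(\gamma)}$:
\[
\langle G, M_\alpha M_\beta\rangle
= \langle \Delta(G), M_\alpha\otimes M_\beta\rangle
= \sum_\gamma \langle \HH_\gamma, M_\alpha\rangle\,\langle G^{(\gamma)}, M_\beta\rangle
= \langle G^{(\alpha)}, M_\beta\rangle ,
\]
where the last equality uses $\langle \HH_\gamma, M_\alpha\rangle = \delta_{\gamma,\alpha}$.

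\textbf{Step 3.} Conclude by recognizing $\sum_\beta \langle G^{(\alpha)}, M_\beta\rangle\,\HH_\beta$ as the expansion of $G^{(\alpha)}$ in the complete homogeneous basis: since $\{\HH_\beta\}$ and $\{M_\beta\}$ are dual, $\langle G^{(\alpha)}, M_\beta\rangle$ is precisely the coefficient of $\HH_\beta$ in $G^{(\alpha)}$, so the sum equals $G^{(\alpha)}$. Combining Steps 1--3 gives $M_\alpha^\perp(G) = G^{(\alpha)}$.

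\textbf{Main obstacle.} There is no serious obstacle here — the content is entirely in unwinding definitions — but the one point requiring a line of justification is that the pairing on $\Nsym\otimes\Nsym$ against $\Qsym\otimes\Qsym$ factors as a product of the pairings on each tensor factor, which is what licenses the split $\langle \HH_\gamma\otimes G^{(\gamma)}, M_\alpha\otimes M_\beta\rangle = \langle \HH_\gamma, M_\alpha\rangle\langle G^{(\gamma)}, M_\beta\rangle$ in Step 2; this is immediate from how the dual Hopf algebra pairing is defined.
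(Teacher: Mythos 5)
Your proof is correct and follows exactly the same route as the paper's: expand $M_\alpha^\perp(G)$ via the dual-basis formula with $\{\HH_\beta\}$ and $\{M_\beta\}$, use the product/coproduct duality to rewrite the coefficient as $\langle \Delta(G), M_\alpha\otimes M_\beta\rangle$, and extract $G^{(\alpha)}$ using $\langle \HH_\gamma, M_\alpha\rangle = \delta_{\gamma,\alpha}$. Nothing further is needed.
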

\begin{proof} Let $G$ be an element of $\Nsym$ such that
$\Delta(G)$ has the expansion $\sum_{\gamma} \HH_\gamma \otimes G^{(\gamma)}$
with the $\HH$-basis in the left tensor.  A direct computation shows
\begin{align*}
M_\alpha^\perp(G) &= \sum_{\beta} \langle G, M_\alpha M_\beta \rangle \HH_\beta
=  \sum_{\beta} \langle \Delta(G), M_\alpha \otimes M_\beta \rangle \HH_\beta\\
&=  \sum_{\beta} \sum_\gamma \langle \HH_\gamma \otimes G^{(\gamma)}, M_\alpha \otimes M_\beta \rangle \HH_\beta=  \sum_{\beta} \langle  G^{(\alpha)}, M_\beta \rangle \HH_\beta = G^{(\alpha)}~.\qedhere
\end{align*}
\end{proof}

To develop some of the formulas for the immaculate basis we will need some
algebraic identities on $\Nsym$ and $\Qsym$.  These are standard results which
are analogous to the corresponding formulas in $\sym$, but require some development
of the algebra to verify their correctness.

As a consequence of Lemma \ref{lemma:perpcomult} we have the following relations.

\begin{Lemma}\label{lemma:Frule} For $i,j>0$ and for $f \in \Nsym$,
\begin{equation}\label{eq:eperpexpr}
 F_{1^i}^\perp( f \HH_j ) = F_{1^i}^\perp(f) \HH_{j} + F_{1^{i-1}}^\perp(f) \HH_{j-1}
\end{equation}
\begin{equation}\label{eq:hperpexpr}
F_{i}^\perp( f \HH_j ) = \sum_{k=0}^{min(i,j)} F_{i-k}^\perp(f) \HH_{j-k}~.
\end{equation}
\end{Lemma}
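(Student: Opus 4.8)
Both identities will follow from Lemma \ref{lemma:perpcomult} applied to the factorization $f\HH_j$, so the first task is to compute the coproduct of $F_{1^i}$ and $F_i$ in $\Qsym$.  The fundamental functions here are indexed by the compositions $1^i=[1,1,\dots,1]$ and $[i]$; since $F_{1^i}=M_{1^i}$ is the $i$-th elementary quasi-symmetric function $e_i$ and $F_{[i]}=\sum_{\beta\models i}M_\beta$, their coproducts are well known.  Explicitly I would record
\[
\Delta(F_{1^i}) \;=\; \sum_{a+b=i} F_{1^a}\otimes F_{1^b},
\qquad
\Delta(F_{[i]}) \;=\; \sum_{a+b=i} F_{[a]}\otimes F_{[b]},
\]
with the conventions $F_{1^0}=F_{[0]}=1$; these follow directly from the coproduct formula on $M_\alpha$ given in the text together with the definition $F_\alpha=\sum_{\beta\leq\alpha}M_\beta$ (one can also cite the standard fact that $F$ is a Hopf-algebra map extending the $e_i$ and $h_i$ pattern).

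**Carrying it out.**  Feeding $\Delta(F_{1^i})$ into Lemma \ref{lemma:perpcomult} with $H=f$, $K=\HH_j$ gives
\[
F_{1^i}^\perp(f\HH_j) \;=\; \sum_{a+b=i} F_{1^a\,\perp}(f)\, F_{1^b\,\perp}(\HH_j).
\]
Thus everything reduces to evaluating $F_{1^b}^\perp(\HH_j)$.  Using Lemma \ref{lemma:Malphaperp} (or the duality pairing directly, $F_{1^b}^\perp(\HH_j)=\sum_\beta\langle \HH_j,F_{1^b}M_\beta\rangle\HH_\beta$) together with $\Delta(\HH_j)=\sum_{k=0}^j \HH_k\otimes\HH_{j-k}$ and the fact that $\langle \HH_k, F_{1^b}\rangle=\langle \HH_k, M_{1^b}\rangle=\delta_{k,b}$ when $b\le 1$ (and more generally $\langle\HH_k,F_\gamma\rangle$ picks out the coarsest composition), one finds $F_{1^0}^\perp(\HH_j)=\HH_j$, $F_{1^1}^\perp(\HH_j)=\HH_{j-1}$, and $F_{1^b}^\perp(\HH_j)=0$ for $b\ge 2$ (since $1^b$ with $b\ge2$ is never $\le$ a one-part composition).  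Only the terms $b=0,1$ survive, yielding exactly \eqref{eq:eperpexpr}.  For \eqref{eq:hperpexpr} the same route with $\Delta(F_{[i]})$ gives
\[
F_i^\perp(f\HH_j)\;=\;\sum_{a+b=i} F_{i-b}^\perp(f)\,F_{[b]}^\perp(\HH_j),
\]
and now $F_{[b]}^\perp(\HH_j)=\sum_\beta\langle\HH_j,F_{[b]}M_\beta\rangle\HH_\beta$.  Since $F_{[b]}=\sum_{\gamma\models b}M_\gamma$ and $\langle\HH_j, M_\gamma M_\beta\rangle$ is nonzero only when the concatenation $[\gamma,\beta]$ equals $[j]$ (forcing $\gamma=[b]$, $\beta=[j-b]$, provided $b\le j$), we get $F_{[b]}^\perp(\HH_j)=\HH_{j-b}$ for $0\le b\le j$ and $0$ otherwise; re-indexing $k=b$ gives \eqref{eq:hperpexpr}.

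**The main obstacle.**  There is no deep difficulty here — it is a bookkeeping exercise — but the one point that needs care is the computation of $F_\gamma^\perp(\HH_j)$, i.e. correctly handling the pairing $\langle \HH_j, F_\gamma M_\beta\rangle$.  The cleanest way to stay honest is to expand $F_\gamma=\sum_{\delta\le\gamma}M_\delta$, use that $M_\delta M_\beta$ is a sum over quasi-shuffles of $\delta$ and $\beta$, and observe that the coefficient of $M_{[j]}$ in such a product is $1$ precisely when $\delta$ and $\beta$ quasi-shuffle (overlapping-concatenate) to $[j]$ — which for a single-part target $[j]$ means $\beta=[j-|\gamma|]$ and $\gamma$ arbitrary of size $\le j$ — and $0$ otherwise.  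Equivalently, and perhaps more transparently, apply Lemma \ref{lemma:Malphaperp}: write $\Delta(\HH_j)=\sum_{k}\HH_k\otimes\HH_{j-k}$, so $M_\alpha^\perp(\HH_j)=\HH_{j-|\alpha|}$ when $\alpha=[|\alpha|]$ is a single part $\le j$ and $0$ otherwise, then sum $M_\delta^\perp(\HH_j)$ over $\delta\le\gamma$.  Either bookkeeping choice delivers the stated coefficients, and the two displayed identities drop out immediately.
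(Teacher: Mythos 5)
Your argument is correct and follows essentially the same route as the paper: compute the coproducts $\Delta(F_{1^i})$ and $\Delta(F_i)$, apply Lemma \ref{lemma:perpcomult} to the product $f\HH_j$, and evaluate $F_{1^b}^\perp(\HH_j)$ and $F_b^\perp(\HH_j)$ via Lemma \ref{lemma:Malphaperp} (the paper uses $F_{r}^\perp=\sum_{\alpha\models r}M_\alpha^\perp$ and $F_{1^r}^\perp=M_{1^r}^\perp$, which is exactly your second bookkeeping option). The only blemish is the aside in your final paragraph asserting the coefficient is $1$ for ``$\gamma$ arbitrary of size $\le j$'': taken literally this would give $F_\gamma^\perp(\HH_j)=\HH_{j-|\gamma|}$ for every such $\gamma$, which fails for $\gamma=1^b$ with $b\ge 2$ (there the value is $0$, as your main computation correctly states), because a single-part $\delta$ can refine $\gamma$ only when $\gamma$ itself has a single part.
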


\begin{proof}
Since $F_i$ and $F_{1^i}$
are, respectively, $h_i$ and $e_i$ in $\sym$,
we know the coproduct rule $\Delta( F_{1^i} ) = \sum_{k=0}^{i} F_{1^{i-k}} \otimes F_{1^k}$,
and $\Delta( F_{i} ) = \sum_{k=0}^{i} F_{{i-k}} \otimes F_{k}$. 
From Lemma \ref{lemma:Malphaperp} and the fact that 
$F_{r}^\perp = \sum_{\alpha \models r} M_\alpha^\perp$, we calculate that 
$F_{r}^\perp( \HH_{j} ) = \sum_{\alpha\models r} M_\alpha^\perp(\HH_j) = \HH_{j-r}$ for $1\leq r\leq j$.
Another application of Lemma \ref{lemma:Malphaperp} with $F_{1^r}^\perp = M_{1^r}^\perp$
 shows that for $s>1$,
$F_{1^s}^\perp( \HH_{j} ) = M_{1^s}^\perp( \HH_j ) = 0$.  Equations
\eqref{eq:eperpexpr} and \eqref{eq:hperpexpr} are a consequence of Lemma \ref{lemma:perpcomult}.
\end{proof}

By applying the formulas in the previous lemma, the following expansions
may be shown by induction on the length of the composition.

\begin{Corollary}\label{lemma:ehperp}
For all $i \ge 0$ and all compositions $\alpha \models n$ such that $\ell(\alpha)=m$, 
\begin{equation}\label{eq:eperponHbasis}
F_{1^i}^\perp(\HH_\alpha) = \sum_{\substack{\beta \in \NN^m\\|\beta| = |\alpha| - i\\\alpha_j - 1 \leq \beta_j \leq \alpha_j}} \HH_\beta,
\end{equation}
\begin{equation}\label{eq:hperponHbasis}
F_{i}^\perp(\HH_\alpha) = \sum_{\substack{\gamma \in \NN^m\\|\gamma| = |\alpha| - i\\0 \leq \gamma_j \leq \alpha_j}} \HH_\gamma~.
\end{equation}
In both sums we use the convention that parts of size $0$ are deleted from the tuple
when it is re-expressed in the $\HH$-basis because $\HH_0 = 1$.
\end{Corollary}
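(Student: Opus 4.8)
The plan is to prove both identities \eqref{eq:eperponHbasis} and \eqref{eq:hperponHbasis} simultaneously by induction on the length $m = \ell(\alpha)$, using Lemma \ref{lemma:Frule} as the inductive engine. The base case $m = 1$ is precisely the computation carried out inside the proof of Lemma \ref{lemma:Frule}: $F_i^\perp(\HH_j) = \HH_{j-i}$ for $0 \le i \le j$ (and $0$ otherwise), which matches the right-hand side of \eqref{eq:hperponHbasis} since the only $\gamma \in \NN^1$ with $|\gamma| = j - i$ and $0 \le \gamma_1 \le j$ is $\gamma = [j-i]$; and $F_{1^i}^\perp(\HH_j) = \HH_j$ if $i=0$, $\HH_{j-1}$ if $i = 1$, and $0$ if $i \ge 2$, which matches \eqref{eq:eperponHbasis} since the constraint $\alpha_1 - 1 \le \beta_1 \le \alpha_1$ forces $i \in \{0,1\}$.

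For the inductive step I would write $\HH_\alpha = \HH_{\alpha'} \HH_{\alpha_m}$ where $\alpha' = [\alpha_1, \dots, \alpha_{m-1}]$, and apply the two displayed formulas of Lemma \ref{lemma:Frule} with $f = \HH_{\alpha'}$ and $j = \alpha_m$. For \eqref{eq:hperponHbasis}, equation \eqref{eq:hperpexpr} gives $F_i^\perp(\HH_\alpha) = \sum_{k=0}^{\min(i,\alpha_m)} F_{i-k}^\perp(\HH_{\alpha'}) \HH_{\alpha_m - k}$; substituting the inductive hypothesis for $F_{i-k}^\perp(\HH_{\alpha'})$ expresses this as a sum over tuples $\gamma' \in \NN^{m-1}$ with $|\gamma'| = |\alpha'| - (i-k)$ and $0 \le \gamma'_j \le \alpha'_j$, appended with a last part $\gamma_m = \alpha_m - k$. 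As $k$ ranges over $0, \dots, \min(i,\alpha_m)$, the last part $\gamma_m$ ranges over the admissible values $0 \le \gamma_m \le \alpha_m$ and the total degree correctly equals $|\alpha| - i$; one checks the reindexing $(\gamma', \gamma_m) \leftrightarrow \gamma$ is a bijection onto the index set on the right of \eqref{eq:hperponHbasis}. The argument for \eqref{eq:eperponHbasis} is the same but shorter: equation \eqref{eq:eperpexpr} only has two terms ($k = 0$ and $k = 1$), corresponding to the last part $\beta_m$ being $\alpha_m$ or $\alpha_m - 1$, exactly matching the constraint $\alpha_m - 1 \le \beta_m \le \alpha_m$.

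The one point requiring a little care — and the only genuine obstacle — is the bookkeeping around the convention that parts of size $0$ are dropped when re-expressed in the $\HH$-basis, since $\HH_0 = 1$. When $\gamma_m = 0$ (the $k = \alpha_m$ term), the term $F_i^\perp(\HH_{\alpha'}) \HH_0 = F_i^\perp(\HH_{\alpha'})$ is a sum of $\HH_{\gamma'}$ with $\gamma' \in \NN^{m-1}$, which is consistent with listing $\gamma = (\gamma', 0) \in \NN^m$ and then deleting the trailing zero; so the stated convention is exactly what makes the two sides agree. I would note explicitly that the identities are asserted at the level of $\NN$-tuples and the passage to the $\HH$-basis (collapsing zeros) is applied uniformly on both sides, so no multiplicities are lost or duplicated. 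With that convention fixed, the induction goes through mechanically, and I would simply remark that the details are "a routine induction using Lemma \ref{lemma:Frule}," as the corollary's preamble already suggests.
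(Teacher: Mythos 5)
Your proposal is correct and is exactly the argument the paper intends: the paper states the corollary "may be shown by induction on the length of the composition" by applying Lemma \ref{lemma:Frule}, which is precisely your induction on $m$ via $\HH_\alpha = \HH_{\alpha'}\HH_{\alpha_m}$ with the base case read off from the computations inside the proof of that lemma. Your explicit handling of the $\HH_0 = 1$ convention and of multiplicities at the level of $\NN$-tuples fills in the routine details the paper omits.
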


\begin{Example}
We compute $F_{11}^\perp \HH_{2112} = \HH_{22} + 2\HH_{211} + 2\HH_{112} + \HH_{1111}$. This comes from removing two boxes from $[2,1,1,2]$ in the following ways and not considering parts of size $0$:

\begin{center}
$
\begin{array}{cccccc} 
\tikztableausmall{{X,\boldentry X},{X}, {X}, {X,\boldentry X}}  &
\tikztableausmall{{X,\boldentry X},{\boldentry X}, {X}, {X,X}} &
\tikztableausmall{{X,\boldentry X},{X}, {\boldentry X}, {X,X}} &
\tikztableausmall{{X,X},{\boldentry X}, {X}, {X,\boldentry X}}  &
\tikztableausmall{{X,X},{X}, {\boldentry X}, {X,\boldentry X}} &
\tikztableausmall{{X,X},{\boldentry X}, {\boldentry X}, {X,X}}
\\
[12pt] [1,1,1,1] &[1,1,2] & [1,1,2]  & [2,1,1] & [2,1,1] &  [2,2]
\end{array}
$
\end{center}

\end{Example}

\begin{Example}
We compute $F_2^\perp \HH_{2112} = 	
\HH_{1111} + 3\HH_{112} + 3\HH_{211} + \HH_{22}
$. This comes from removing two boxes from $[2,1,1,2]$ in the following ways:

\begin{center}
$
\begin{array}{cccccccc} 
\tikztableausmall{{X,\boldentry X},{X}, {X}, {X,\boldentry X}}  &
\tikztableausmall{{X,\boldentry X},{\boldentry X}, {X}, {X,X}} &
\tikztableausmall{{X,\boldentry X},{X}, {\boldentry X}, {X,X}} &
\tikztableausmall{{\boldentry X,\boldentry X},{X}, {X}, {X, X}}  &
\tikztableausmall{{X,X},{\boldentry X}, {X}, {X,\boldentry X}} &
\tikztableausmall{{X,X},{X}, {\boldentry X}, {X,\boldentry X}} &
\tikztableausmall{{X,X},{X}, {X}, {\boldentry X,\boldentry X}} &
\tikztableausmall{{X,X},{\boldentry X}, {\boldentry X}, {X,X}} 
\\
[12pt] [1,1,1,1] & [1,1,2] & [1,1,2] & [1,1,2] & [2,1,1] & [2,1,1] & [2,1,1] & [2,2]
\end{array}
$
\end{center}

\end{Example}

We call a linear ordering of variables $(y_1,y_2,...)$ an {\sl alphabet}. Remark that an alphabet could be finite or countable.

For an alphabet $Y$, we define
    $$\OM_Y = \sum_{\alpha} M_{\alpha}[Y] \HH_\alpha,$$
where $M_\alpha[Y]$ is the monomial quasi-symmetric function indexed by $\alpha$
expanded  over  an    alphabet  $Y$ and the monomial quasi-symmetric functions are allowed
to commute with the non-commutative symmetric functions.    In  particular  when  $Y$  is  an  alphabet 
with a single variable,
\begin{gather*}
    \OM_z = \sum_{d \geq 0} z^d \HH_d.
\end{gather*}

If $Y = (y_1, y_2, \dots)$ is an alphabet and $z$ is another variable not in $Y$,
we let $z,Y$ denote the alphabet $(z, y_1, y_2, \dots)$.
We notice that $\OM_z \OM_Y = \OM_{z,Y}$ by calculating

\begin{align*}
    \OM_{z,Y}
    &= \sum_{\alpha} M_{\alpha}[z,Y] \HH_\alpha \\
    &= \sum_{\alpha}
        \left(M_{\alpha}[Y] + z^{\alpha_1} M_{\alpha_2, \dots, \alpha_m}[Y]\right)
        \HH_{\alpha} \\
    &= \sum_{\alpha} M_{\alpha}[Y] \HH_\alpha
    + \sum_{\alpha} z^{\alpha_1} \HH_{\alpha_1} M_{\alpha_2, \dots, \alpha_m}[Y] \HH_{\alpha_2, \dots, \alpha_m} \\
    &= \sum_\alpha M_\alpha[Y] \HH_\alpha 
     + \sum_{d > 0} z^{d} \HH_{d} \sum_{\beta} M_{\beta} [Y] \HH_{\beta} \\
    &= \left(\sum_{d \geq 0} z^d \HH_d\right) \left(\sum_\gamma M_\gamma[Y] \HH_\gamma\right) \\
    &= \OM_z \OM_Y.
\end{align*}
We can then determine by induction that $\OM_Z \OM_Y = \OM_{Z,Y}$ for a finite alphabet $Z$.

For $F \in \Qsym$, $F^\perp$ acts on the non-commutative symmetric functions and
does not affect the monomial quasi-symmetric function coefficients:
\begin{align*}
F^\perp \OM_Y &= \sum_{\alpha} M_\alpha[Y] F^\perp (\HH_\alpha) \\
&= \sum_{\alpha}M_\alpha[Y] \sum_\beta \langle \HH_\alpha, F M_\beta \rangle \HH_\beta\\
&= \sum_\beta \sum_{\alpha} \langle \HH_\alpha, F M_\beta \rangle M_\alpha[Y] \HH_\beta\\
&= \sum_\beta F[Y] M_\beta[Y] \HH_\beta  = F[Y] \OM_Y~.
\end{align*}

We also define the two operators with the parameter $z$ as
\begin{equation*}
    \EEE^\perp_{z} = \sum_{i \geq 0} z^i F_{1^i}^\perp,\hbox{ and }
    \HHH^\perp_{z} = \sum_{i \geq 0} z^i F_{i}^\perp,
\end{equation*} 
then $\EEE^\perp_{z}\OM_{Y} = \sum_{i \geq 0} z^i F_{1^i}[Y] \OM_Y = \OM_Y \prod_{y \in Y} (1+zy)$ and
$\HHH^\perp_{z}\OM_{Y} = \sum_{i \geq 0} z^i F_{i}[Y] \OM_Y = \OM_Y /\prod_{y \in Y}(1-zy)~.$

\section{A new basis for $\Nsym$}

We are now ready to introduce our new basis of $\Nsym$.
These functions were discovered while playing with a non-commutative analogue of the 
Jacobi-Trudi identity (Theorem \ref{thm:JT}). They may also be defined as the 
unique functions in $\Nsym$ which satisfy a right-Pieri rule (Theorem \ref{thm:Pieri} 
and Proposition \ref{prop:ePieri}). In order to streamline our proofs and extend our 
definitions to a Hall-Littlewood analogue, we start by building our new basis 
using a non-commutative version of the Bernstein operators (Theorem \ref{th:bern}).

\subsection{Non-commutative immaculate functions}

We continue with the notation of the previous section:
$\HH_i$ is the complete homogeneous non-commutative symmetric function;
$F_\alpha$ is the fundamental quasi-symmetric function indexed by the composition $\alpha$;
and
$F_\alpha^\perp$ is the linear transformation of $\Nsym$ that is adjoint
to multiplication by $F_\alpha$ in $\Qsym$.

\begin{Definition}
We define the non-commutative Bernstein operators $\BB_m$ as:
\[ \BB_m = \sum_{i \ge 0} (-1)^i \HH_{m+i} F_{1^i}^\perp~.\]
\end{Definition}

Using the non-commutative Bernstein operators, we can inductively build functions 
using creation operators similar to Bernstein's formula (Theorem \ref{th:bern}) for the Schur functions.

\begin{Definition}\label{def:immaculate}
For any $\alpha = [\alpha_1, \alpha_2, \cdots, \alpha_m] \in \ZZ^m$, 
the \emph{immaculate function} $\fS_\alpha \in \Nsym$ 
is defined as the composition of the operators applied to $1$ in the expression
$$\fS_\alpha := \BB_{\alpha_1} \BB_{\alpha_2} \cdots \BB_{\alpha_m} (1)~.$$
\end{Definition}

Calculations in the next subsection will show that the elements
$\{ \fS_\alpha \}_{\alpha \models n}$ form a basis for $\Nsym_n$.

\begin{Example}
If $\alpha = (a)$ has only one part, then $\fS_a$ is just the complete homogeneous generator $\HH_a$. If $\alpha = [a, b]$ consists of two parts, then $\fS_{ab} = \BB_a (\HH_b) = \HH_a \HH_b - \HH_{a+1} \HH_{b-1}$.
\end{Example}

\subsection{The right-Pieri rule for immaculate functions}

\begin{Lemma}\label{lemma:Bexpansion} For $s \geq 0$ and $m \in \ZZ$, and for $f$ an element of $\Nsym$,
\[ \BB_m(f)\HH_s = \BB_{m+1}(f) \HH_{s-1} + \BB_m(f\HH_s).\]
\end{Lemma}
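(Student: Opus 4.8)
The plan is to expand $\BB_m(f)\HH_s$ using the definition $\BB_m = \sum_{i\ge 0}(-1)^i\HH_{m+i}F_{1^i}^\perp$ and the ``Leibniz-type'' rule \eqref{eq:eperpexpr} of Lemma~\ref{lemma:Frule}, then reorganize the resulting double sum to recognize the two terms on the right-hand side. First I would write
\[
\BB_m(f)\HH_s = \sum_{i\ge 0}(-1)^i\HH_{m+i}\,F_{1^i}^\perp(f)\,\HH_s.
\]
At this point the factor $F_{1^i}^\perp(f)\HH_s$ is just a product in $\Nsym$, so I cannot yet apply \eqref{eq:eperpexpr}; instead the idea is to run the manipulation in the other direction. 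I would start from the right-hand side and expand each of $\BB_{m+1}(f)\HH_{s-1}$ and $\BB_m(f\HH_s)$ using the definition of $\BB$. For the second term, $\BB_m(f\HH_s) = \sum_{i\ge 0}(-1)^i\HH_{m+i}F_{1^i}^\perp(f\HH_s)$, and now \eqref{eq:eperpexpr} applies to $F_{1^i}^\perp(f\HH_s)$, giving $F_{1^i}^\perp(f)\HH_s + F_{1^{i-1}}^\perp(f)\HH_{s-1}$.

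So the key computation is:
\[
\BB_m(f\HH_s) = \sum_{i\ge 0}(-1)^i\HH_{m+i}\bigl(F_{1^i}^\perp(f)\HH_s + F_{1^{i-1}}^\perp(f)\HH_{s-1}\bigr),
\]
where the $i=0$ term of the second summand vanishes since $F_{1^{-1}}^\perp = 0$. The first group of terms is exactly $\BB_m(f)\HH_s$. For the second group, reindex with $j = i-1$: it becomes $\sum_{j\ge 0}(-1)^{j+1}\HH_{m+1+j}F_{1^j}^\perp(f)\HH_{s-1} = -\BB_{m+1}(f)\HH_{s-1}$. Hence $\BB_m(f\HH_s) = \BB_m(f)\HH_s - \BB_{m+1}(f)\HH_{s-1}$, which rearranges to the claimed identity. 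I should also note the edge case: the lemma allows $s=0$ and $m\in\ZZ$ arbitrary; when $s=0$ we have $\HH_0 = 1$ and $\HH_{-1}=0$, so the identity reads $\BB_m(f) = \BB_m(f)$, consistent; and the convention $\HH_{-r}=0$ handles negative indices in the operators themselves without trouble.

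The only mild obstacle is making sure \eqref{eq:eperpexpr} is being invoked legitimately — it is stated for $j>0$ and $f\in\Nsym$, whereas I want to apply it with $j=s$, possibly $s=0$. For $s>0$ it applies verbatim; for $s=0$ one checks the statement directly as above, or simply observes that the lemma's conclusion is trivially true when $s=0$. There is nothing deep here: the result is a formal consequence of the definition of $\BB_m$ together with the comultiplication-driven identity in Lemma~\ref{lemma:Frule}, and the whole proof is a two-line reindexing once the pieces are assembled in the right order.
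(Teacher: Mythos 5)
Your proposal is correct and follows essentially the same route as the paper: expand $\BB_m(f\HH_s)$ by the definition of $\BB_m$, apply Equation \eqref{eq:eperpexpr} of Lemma \ref{lemma:Frule} to each $F_{1^i}^\perp(f\HH_s)$, and reindex to obtain $\BB_m(f\HH_s) = \BB_m(f)\HH_s - \BB_{m+1}(f)\HH_{s-1}$. Your extra care with the $s=0$ case (where Lemma \ref{lemma:Frule} is stated only for $j>0$) is a small point the paper glosses over, but otherwise the arguments coincide.
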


\begin{proof}
By definition:
$$  \BB_m (f \HH_s) =  \sum_{i\ge 0} (-1)^i \HH_{m+i} F_{1^i}^\perp (f \HH_s).$$
Using Lemma \ref{lemma:Frule}, one obtains:
     $$  \BB_m (f \HH_s) =  \sum_{i\ge 0} (-1)^i \HH_{m+i} (F_{1^{i-1}}^\perp (f) \HH_{s-1} + F_{1^i}^\perp (f) \HH_s ),$$
which by associativity and reindexing gives:
     $$\BB_m (f \HH_s) = - \BB_{m+1}(f) \HH_{s-1} + \BB_m(f) \HH_s.\qedhere$$
\end{proof}
\begin{Theorem}\label{thm:Pieri} For a composition $\alpha$, 
the $\fS_\alpha$ satisfy a multiplicity free right-Pieri rule for multiplication by $\HH_s$:
\[\fS_\alpha  \HH_s  =  \sum_{  \alpha  \subset_{s}  \beta}  \fS_\beta,\] 
where the notation $\subset_{s}$ is introduced in Section \ref{sec:compositions}.
\end{Theorem}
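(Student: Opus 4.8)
The plan is to induct on the length $\ell(\alpha)$ of the composition $\alpha$, using Lemma \ref{lemma:Bexpansion} as the engine. The base case $\ell(\alpha) = 0$ (so $\alpha$ is empty and $\fS_\alpha = 1$) reduces to checking that $\HH_s = \sum_{\emptyset \subset_s \beta} \fS_\beta$, and the only $\beta$ with $\emptyset \subset_s \beta$ is $\beta = [s]$, for which $\fS_{[s]} = \HH_s$; so the base case is immediate. For the inductive step, write $\alpha = [m, \gamma]$ with $m = \alpha_1$ and $\gamma = [\alpha_2, \dots, \alpha_\ell]$, so that $\fS_\alpha = \BB_m(\fS_\gamma)$. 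Applying Lemma \ref{lemma:Bexpansion} with $f = \fS_\gamma$ gives
\[
\fS_\alpha \HH_s = \BB_m(\fS_\gamma)\HH_s = \BB_{m+1}(\fS_\gamma)\HH_{s-1} + \BB_m(\fS_\gamma \HH_s) = \fS_{[m+1,\gamma]}\HH_{s-1} + \BB_m(\fS_\gamma \HH_s).
\]
Now I would iterate: apply the same identity to $\fS_{[m+1,\gamma]}\HH_{s-1}$, and continue peeling off terms, obtaining
\[
\fS_\alpha \HH_s = \sum_{j=0}^{s} \BB_{m+j}\bigl(\fS_\gamma \HH_{s-j}\bigr) \;-\; \text{(telescoping correction)},
\]
more precisely a clean telescoping identity expressing $\fS_\alpha \HH_s$ as $\sum_{j \ge 0} \BB_{m+j}(\fS_\gamma \HH_{s-j})$ where the sum is finite since $\HH_{s-j} = 0$ for $j > s$ and (as I will argue) the $\BB_{m+j}$ terms with large $j$ also vanish when applied to a fixed-degree element. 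Actually the cleanest route is: one shows by a short induction on $s$ that $\BB_m(f)\HH_s = \sum_{j=0}^{s} \BB_{m+j}(f\HH_{s-j})$ for any $f \in \Nsym$ — this is the "fully unwound" version of Lemma \ref{lemma:Bexpansion}.

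With that unwound identity in hand, set $f = \fS_\gamma$ and apply the inductive hypothesis to each $\fS_\gamma \HH_{s-j}$: since $\ell(\gamma) = \ell(\alpha) - 1$, we get $\fS_\gamma \HH_{s-j} = \sum_{\gamma \subset_{s-j} \delta} \fS_\delta$. Therefore
\[
\fS_\alpha \HH_s = \sum_{j=0}^{s} \;\sum_{\gamma \subset_{s-j} \delta} \BB_{m+j}(\fS_\delta) = \sum_{j=0}^{s} \;\sum_{\gamma \subset_{s-j} \delta} \fS_{[m+j,\delta]}.
\]
The final step is purely combinatorial: I need to check that the map $(j,\delta) \mapsto [m+j,\delta]$ is a bijection from the set of pairs appearing on the right onto $\{\beta : \alpha \subset_s \beta\}$, and that every $\fS_\beta$ occurring is a genuine immaculate function indexed by a composition (no negative or zero parts sneaking in — indeed $m+j \ge m = \alpha_1 \ge 1$ and $\delta$ is a composition since $\gamma \subset_{s-j}\delta$ forces $\delta_i \ge \gamma_i$, with $\delta$ having all positive parts once we note parts of $\gamma$ are positive, except we must handle the case $\ell(\delta) = \ell(\gamma)+1$ where the new last part is positive by definition of $\subset$). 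Unwinding the definition: $\beta = [m+j, \delta_1, \dots]$ satisfies $|\beta| = |\alpha| + s$ iff $j + |\delta| - |\gamma| = s$ iff $|\delta| = |\gamma| + (s-j)$; the condition $\beta_i \le ?$ — wait, the $\subset_s$ relation requires $\alpha_i \le \beta_i$ for all $i \le \ell(\alpha)$, which translates to $m \le m+j$ (automatic) and $\gamma_i \le \delta_i$ for $i \le \ell(\gamma)$ (exactly condition (2) of $\gamma \subset_{s-j}\delta$); and $\ell(\beta) \le \ell(\alpha)+1$ translates to $\ell(\delta) \le \ell(\gamma)+1$ (condition (3)). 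So the correspondence is a genuine bijection once one also checks the ranges of $j$ match up ($0 \le j$ is automatic, and $j \le s$ since $|\delta| \ge |\gamma|$).

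The main obstacle — and it is a mild one — is the bookkeeping in the combinatorial final step: one must be careful that the decomposition $\beta = [m+j,\delta]$ is \emph{unique}, i.e. that $j$ and $\delta$ are determined by $\beta$ (they are: $m+j = \beta_1$ so $j = \beta_1 - m$, and $\delta = [\beta_2,\dots,\beta_{\ell(\beta)}]$), and that one does not accidentally double-count or miss the boundary case where $\delta$ has one more part than $\gamma$. I would also want to double-check the unwound form of Lemma \ref{lemma:Bexpansion} by a clean induction on $s$ rather than trusting the informal "telescoping" — that induction is where a sign error could hide, since Lemma \ref{lemma:Bexpansion} itself has the shape $\BB_m(f)\HH_s = \BB_{m+1}(f)\HH_{s-1} + \BB_m(f\HH_s)$, and iterating it replaces $\BB_{m+1}(f)\HH_{s-1}$ repeatedly, so the $\BB_{m+j}(f\HH_{s-j})$ terms all come out with coefficient $+1$ and nothing telescopes away except by the degree vanishing $\HH_{\text{negative}} = 0$. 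Modulo that verification, the proof is a straightforward induction.
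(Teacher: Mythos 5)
Your proposal is correct and is essentially the paper's own argument: both rest on Lemma \ref{lemma:Bexpansion} together with the creation-operator recursion $\fS_{[m,\gamma]}=\BB_m(\fS_\gamma)$, the only difference being bookkeeping — you first unwind the lemma into $\BB_m(f)\HH_s=\sum_{j=0}^{s}\BB_{m+j}(f\HH_{s-j})$ so that a single induction on $\ell(\alpha)$ suffices, whereas the paper applies the lemma once and runs a joint induction on $s+\ell(\alpha)$, handling the term $\fS_{[m+1,\overline{\alpha}]}\HH_{s-1}$ by the same Pieri rule at smaller $s$. Your final bijection $(j,\delta)\mapsto[m+j,\delta]$ is exactly the paper's step of combining the two sums (boxes added to the first row versus not), so nothing essential differs.
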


\begin{proof}
Let $m = \alpha_1$ and let $\overline{\alpha} = [\alpha_2, \alpha_3, \dots, \alpha_k]$ denote the composition with first part removed. The proof will be by induction on $s + \ell(\alpha)$, the base case being trivial. By definition, $\BB_m \fS_{\overline{\alpha}} = \fS_\alpha$, so

\begin{align*}
 \fS_\alpha \HH_s &= \BB_m (\fS_{\overline{\alpha}}) \HH_s \\
&= \BB_{m+1}(\fS_{\overline{\alpha}})\HH_{s-1} + \BB_m(\fS_{\overline{\alpha}} \HH_s) & \textrm{ by Lemma \ref{lemma:Bexpansion} },\\
&= \fS_{[m+1, \overline{\alpha}]} \HH_{s-1} + \BB_m(\sum_{\overline{\alpha} \subset_s \eta} \fS_{\eta})& \textrm{ by the Pieri rule on $\overline{\alpha}$ and $s$},\\
&= \sum_{[m+1, \overline{\alpha}] \subset_{s-1} \gamma} \fS_\gamma + \sum_{\overline{\alpha} \subset_s \eta} \fS_{[m, \eta]} & \textrm{ by the Pieri rule on $[m+1, \overline{\alpha}]$ and $s-1$}.
\end{align*}
The first sum counts all $\gamma$ which arise from adding boxes to $\alpha$, adding at least one to the first part of the composition, which are bounded in length by $\ell(\alpha) + 1$. The second sum counts all $\eta$ which arise from adding boxes to $\alpha$, without adding to the first part, which are bounded in length by $\ell(\alpha) + 1$. The statement now follows from combining the two sums.
\end{proof}

\begin{Remark}
Products of the form $\HH_m \fS_\alpha$ do not have as nice an expression as $\fS_\alpha \HH_m$
since they
generally have negative signs in their expansion and there is no obvious containment of
resulting compositions.  For example,
\[ \HH_1 \fS_{13} = \fS_{113} - \fS_{221} - \fS_{32}~. \]
The reason for this is that left multiplication by $\HH_m$
can be re-expressed as
\[ \HH_m = \sum_{i \geq 0} \BB_{m+i} F_{i}^\perp~. \]
We will develop the algebra required to understand where the negative signs potentially arise,
but we will not give a satisfactory left Pieri rule.
We conjecture that the left Pieri rule is multiplicity free, up to sign.
\begin{Conjecture}
\[\HH_m \fS_\alpha = \sum_\beta (-1)^{sign(\alpha, \beta)} \fS_\beta,\] where the sum is over some collection of compositions $\beta$ of size $|\alpha| +m$ and $sign$ is some statistic which depends on $\alpha$ and $\beta$.
\end{Conjecture}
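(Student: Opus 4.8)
The plan is to attack the left product through the operator identity $\HH_m=\sum_{i\ge0}\BB_{m+i}F_i^\perp$ recorded in the Remark, which gives
\[\HH_m\fS_\alpha=\sum_{i\ge0}\BB_{m+i}\bigl(F_i^\perp\fS_\alpha\bigr),\]
so the first step is to expand $F_i^\perp\fS_\alpha$ in the immaculate functions. For this I would establish the commutation rule $F_i^\perp\BB_m=\sum_{k\ge0}\BB_{m-k}F_{i-k}^\perp$; it follows from Lemma \ref{lemma:Frule} by a short induction, or, in generating-function form, from
\[\HHH_z^\perp\,\BB(w)=\frac{1}{1-zw}\,\BB(w)\,\HHH_z^\perp,\qquad \BB(w):=\sum_{m\in\ZZ}w^m\BB_m=\OM_w\,\EEE^\perp_{-1/w},\]
which in turn comes out of Lemma \ref{lemma:perpcomult} together with the group-likeness of $\HHH_z=\sum_i z^iF_i$ and $\EEE_z=\sum_i z^iF_{1^i}$ in $\Qsym$ and the action of $\HHH_z^\perp$ on $\OM_w$ computed in Section \ref{sec:nsymqsymcalc}. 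Iterating this rule along the word $\BB_{\alpha_1}\cdots\BB_{\alpha_{\ell(\alpha)}}$ and using $F_0^\perp=\id$ and $F_j^\perp(1)=\delta_{j,0}$ yields
\[F_i^\perp\fS_\alpha=\sum_{\substack{k_1+\cdots+k_{\ell(\alpha)}=i\\ k_j\ge 0}}\fS_{(\alpha_1-k_1,\ldots,\alpha_{\ell(\alpha)}-k_{\ell(\alpha)})},\]
and, substituting back (using $F_i^\perp\fS_\alpha=0$ for $i>|\alpha|$ for degree reasons),
\[\HH_m\fS_\alpha=\sum_{\substack{\gamma\in\ZZ^{\ell(\alpha)},\ \gamma_j\le\alpha_j\\ 0\le|\gamma|\le|\alpha|}}\fS_{(m+|\alpha|-|\gamma|,\,\gamma_1,\ldots,\gamma_{\ell(\alpha)})}.\]
The constraints force each $\gamma_j$ into a bounded interval, so this is a \emph{finite} sum, and as a sum over integer tuples it is already multiplicity-free, every coefficient being $+1$. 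Thus the whole content of the conjecture is that multiplicity-freeness (now up to sign) is preserved when each integer-tuple-indexed $\fS$ is straightened into the composition-indexed basis.

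The second step is therefore to develop a straightening algorithm expressing $\fS_\gamma$, for an arbitrary $\gamma\in\ZZ^{\ell(\alpha)+1}$, in the basis $\{\fS_\beta:\beta\text{ a composition}\}$ --- precisely the ``algebra required to understand where the negative signs potentially arise'' alluded to in the Remark. One checks immediately that $\fS_\gamma=0$ when the last part of $\gamma$ is negative (since $\BB_c(1)=\HH_c$) and that a trailing $0$ simply deletes a part, so only the interior nonpositive entries require real work; the simplest genuine relation is $\fS_{0b}=-\sum_{j=1}^{b-1}\fS_{(j,b-j)}$, which follows by telescoping $\BB_0(\HH_b)=\HH_b-\HH_1\HH_{b-1}$. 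Such relations can be read off from the defining formula of $\BB_m$ after re-expanding in the immaculate basis, and they are more intricate than the Schur straightening of Proposition \ref{prop:schurcomposition}, since a single $\fS_\gamma$ generally spreads over several composition-indexed basis elements with coefficients $\pm1$. Applying these moves to every term in the resulting sum for $\HH_m\fS_\alpha$ and collecting rewrites it as a finite $\ZZ$-linear combination of the $\fS_\beta$ with $\beta\models|\alpha|+m$.

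The final --- and, I expect, decisive --- step is to prove that this $\ZZ$-linear combination has all coefficients in $\{-1,0,1\}$, and to pin down both the indexing set of $\beta$ and the statistic $sign(\alpha,\beta)$. This is the main obstacle: unlike the right-Pieri rule, whose proof runs by a clean cancellation-free induction on $s+\ell(\alpha)$ via Lemma \ref{lemma:Bexpansion}, here one must exhibit a massive cancellation between positive and negative contributions coming from straightening. The two natural strategies are (a) to build a sign-reversing involution on the multiset of straightened terms which pairs off all but one contribution to each surviving $\fS_\beta$, the sign of the survivor being $(-1)^{sign(\alpha,\beta)}$; or (b) to run an induction on $\ell(\alpha)$ in the spirit of the right-Pieri proof, peeling off $\BB_{\alpha_1}$ and combining the commutation rule $F_i^\perp\BB_{\alpha_1}=\sum_k\BB_{\alpha_1-k}F_{i-k}^\perp$ with the (conjectured) left-Pieri rule for $\fS_{\overline{\alpha}}$ --- though the $\BB_{\alpha_1-k}$ sitting between $\BB_{m+i}$ and $F_j^\perp\fS_{\overline{\alpha}}$ prevents the sum from telescoping, so this does not obviously close without an independent handle on short compositions and a careful ordering that keeps the signs from tangling. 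In either approach one must first reverse-engineer the correct combinatorial description of the pairs $(\alpha,\beta)$ and of the sign from data such as $\HH_1\fS_{13}=\fS_{113}-\fS_{221}-\fS_{32}$; that no such clean description has emerged is exactly why the statement appears here only as a conjecture.
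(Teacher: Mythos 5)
You are trying to prove a statement that the paper itself leaves as a conjecture — there is no proof in the paper to compare against — and your proposal does not close it either. The reduction you carry out is sound as far as it goes: the identity $\HH_m=\sum_{i\ge0}\BB_{m+i}F_i^\perp$ is exactly the one recorded in the paper's remark following Theorem \ref{thm:Pieri}; your formula for $F_i^\perp\fS_\alpha$ is Proposition \ref{prop:hperponfS} (with finiteness coming from Lemma \ref{cor:zeroelts}); and the resulting multiplicity-free expansion $\HH_m\fS_\alpha=\sum_{\gamma}\fS_{(m+|\alpha|-|\gamma|,\,\gamma_1,\dots,\gamma_{\ell(\alpha)})}$ over integer tuples with $\gamma_j\le\alpha_j$ is correct, since $\BB_c\fS_\gamma=\fS_{(c,\gamma)}$ by definition. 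Your sample straightening $\fS_{0b}=-\sum_{j=1}^{b-1}\fS_{(j,b-j)}$ also checks out, by telescoping $\fS_{(j,b-j)}=\HH_j\HH_{b-j}-\HH_{j+1}\HH_{b-j-1}$.

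The genuine gap is that the decisive step is exactly the content of the conjecture and you leave it untouched: you must show that, after straightening every $\fS_\gamma$ whose index is not a composition into the basis $\{\fS_\beta\}_{\beta\models|\alpha|+m}$, the net coefficient of each $\fS_\beta$ lies in $\{-1,0,+1\}$, and you must identify which $\beta$ occur and what $sign(\alpha,\beta)$ is. Neither the sign-reversing involution of your strategy (a) nor the induction of your strategy (b) is actually constructed, and you yourself note why (b) does not obviously close. Moreover, unlike the commutative situation of Proposition \ref{prop:schurcomposition} (or the analogous row-swap behaviour one might hope for in Theorem \ref{thm:JT}), a single $\fS_\gamma$ indexed by an integer tuple generally straightens to a signed \emph{sum} of several composition-indexed immaculate functions, so there is real cancellation to control and no a priori bound on multiplicities; the example $\HH_1\fS_{13}=\fS_{113}-\fS_{221}-\fS_{32}$ only constrains, but does not determine, the combinatorics. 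In short, what you have is a correct and useful reformulation of the problem — evidently the one the authors had in mind when they wrote that left multiplication by $\HH_m$ ``can be re-expressed as'' $\sum_{i\ge0}\BB_{m+i}F_i^\perp$ and that they ``will not give a satisfactory left Pieri rule'' — but not a proof; the statement remains open after your proposal exactly as it does in the paper.
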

\end{Remark}

\begin{Example}\label{pigeon} The expansion of $\fS_{23}$ multiplied on the right by $\HH_3$ is done below, along with corresponding pictures.
\[
\begin{array}{ccccccccccc}
\tikztableausmall{{X,X},{X,X,X}}& &\tikztableausmall{{\boldentry X,\boldentry X,\boldentry X}}  =   &
\tikztableausmall{{X, X},{X,X, X},{\boldentry X, \boldentry X, \boldentry  X}} &
 \tikztableausmall{{X, X},{X,X, X, \boldentry X},{\boldentry X,\boldentry X}} &
 \tikztableausmall{{X,X},{X,X,X,\boldentry X,\boldentry X},{\boldentry X}}\\
 \fS_{23} &* &\HH_3 = & \fS_{233} &+\, \fS_{242} &+\,\fS_{251}
\end{array}
\]
\[
\begin{array}{ccccc}
& \tikztableausmall{{X,X},{X, X, X, \boldentry X, \boldentry X, \boldentry X}} &
 \tikztableausmall{{X,X,\boldentry X},{X,X,X}, {\boldentry X, \boldentry X}} &
   \tikztableausmall{{X,X,\boldentry X},{X, X, X, \boldentry X}, {\boldentry X}} &
 \tikztableausmall{{X,X,\boldentry X},{X, X, X, \boldentry X, \boldentry X}}\\
&+\,\fS_{26} &+\, \fS_{332}& +\, \fS_{341}& +\, \fS_{35}
\end{array}
\]
\[
\begin{array}{ccccc}
& \tikztableausmall{{X,X,\boldentry X,\boldentry X},{X, X, X}, {\boldentry X}} 
& \tikztableausmall{{X,X,\boldentry X,\boldentry X},{X, X, X, \boldentry X}} 
& \tikztableausmall{{X,X, \boldentry X, \boldentry X, \boldentry X},{X,X,X}}
\\
&+\,\fS_{431} &+ \,\fS_{44} &+\, \fS_{53}
\end{array}
\]
\end{Example}

\subsection{Relationship with the classical bases of $\Nsym$}

We will now develop some relations between the classical bases of $\Nsym$ and the immaculate basis. In particular, the first result will establish the fact that the immaculate functions indexed by compositions do in fact form a graded basis of $\Nsym$. First, we need the notion of an immaculate tableau.

\subsubsection{Immaculate tableaux}

\begin{Definition}
Let $\alpha$ and $\beta$ be compositions. An \emph{immaculate tableau} of shape
$\alpha$ and content $\beta$ is a labelling of the boxes of the diagram of
$\alpha$ by positive integers in such a way that:
\begin{enumerate}
\item the number of boxes labelled by $i$ is $\beta_i$;
\item the sequence of entries in each row, from left to right, is weakly increasing;
\item the sequence of entries in the \emph{first} column, from top to bottom,
    is increasing.
\end{enumerate}

An immaculate tableau is said to be \emph{standard} if it has content
$1^{|\alpha|}$.

Let $K_{\alpha, \beta}$ denote the number of immaculate tableaux of shape
$\alpha$ and content $\beta$.
\end{Definition}

We re-iterate that, aside from the elements in the first column, there is no relation on the elements in the
columns of an immaculate tableau.

\begin{Example}\label{ex:immaculatetableau}
There are five immaculate tableaux of shape $[4,2,3]$ and content $[3,1,2,3]$: 
\[ \tikztableausmall{{1,1, 1, 3},{2, 3}, {4,4,4}} 
\tikztableausmall{{1,1, 1, 3},{2, 4}, {3,4,4}} 
\tikztableausmall{{1,1, 1, 4},{2,3}, {3,4,4}} 
\tikztableausmall{{1,1, 1, 4},{2, 4}, {3,3,4}} 
\tikztableausmall{{1,1, 1, 2},{3, 3}, {4,4,4}} 
\]
\end{Example}

Standard immaculate tableaux of size
$n$ can be identified with set partitions of $\{1, 2, \dots, n\}$ by ordering
the parts in the partition by minimal elements.  
This remark was pointed out to us in a discussion with Martha Yip.
It allows us to state a surprising enumeration formula for standard immaculate
tableaux which is analogous to the hook length formula for standard tableaux.
In order to state and prove this formula, we need to define
the standardization of immaculate tableaux and hooks of cells.

\begin{Definition} \label{def:standardization}
Given an immaculate tableau $T$ of shape $\alpha$ and content $\beta$, we form a standard immaculate tableau $std(T) = S$ of shape $\alpha$ and content $(1^n)$ as follows. We order the entries of the tableau $T$, reading first all entries valued $1$, then $2$, etc. Among all entries with the same value, we first read all entries in the lowest row, starting at the leftmost position and read first to the right and then up rows. The order of the entries forms a standard immaculate tableau which we call the standardization of $T$.
\end{Definition}

\begin{Example}
\label{example:standardization}
The following tableau is of  shape $[6, 5, 7]$ and content $[2, 3, 5, 1, 4, 3]$.
$$ T= \tikztableausmall{{1,1,2,2, 3,4},{2, 3, 3, 3, 3}, {5,5,5,5,6, 6, 6}} $$
The standardization of $T$ is:
$$ S= \tikztableausmall{{1,2,4,5, 10,11},{3, 6, 7, 8, 9}, {12,13,14,15,16, 17, 18}} $$
\end{Example}

Let $c = (i,j)$ be a cell in row $i$ and column $j$ of the
diagram for a composition $\alpha$ (that is, $1 \leq i \leq \ell(\alpha)$ and $1 \leq j \leq \alpha_i$).
If $c = (i,1)$, define the {\it hook} of $c$ in $\alpha$ to be $h_\alpha(c) = \alpha_i + \alpha_{i+1} + \cdots + \alpha_{\ell(\alpha)}$
(the number of cells below and to the right in the diagram).
If $j>1$, then the hook of $c$ in $\alpha$ is $h_\alpha(c) = \alpha_i -j + 1$
(the number of cells weakly to the right in the same row).

\begin{Proposition}
If $\alpha \models n$, the number of standard immaculate tableaux of shape $\alpha$ is equal to
\begin{equation}\label{eq:hooklength}
K_{\alpha,1^{n}} = \frac{n!}{\prod_{c \in \alpha} h_\alpha(c)}
\end{equation}
where $c \in \alpha$ indicates $c = (i,j)$ with $1 \leq i \leq \ell(\alpha)$ and $1 \leq j \leq \alpha_i$.
\end{Proposition}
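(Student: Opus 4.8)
The plan is to prove the hook-length formula for standard immaculate tableaux by a sign-reversing / recursive argument that mirrors the structure of the right-Pieri rule. The key observation is that, just as standard Young tableaux satisfy a branching rule (removing the cell containing $n$), standard immaculate tableaux of shape $\alpha$ decompose according to which cell contains the largest entry $n$. By the definition of an immaculate tableau, the cell containing $n$ must be a cell that can be removed to leave the diagram of a valid composition: it must be the last cell of some row $i$, and — because the first column must be strictly increasing — removing it must not destroy that property. Concretely, the cell $(i, \alpha_i)$ can hold $n$ precisely when $\alpha_i > \alpha_{i+1}$ if $i < \ell(\alpha)$ (so that deletion still leaves $\alpha_i \ge \alpha_{i+1}$ is not needed, but rather that after deletion we still have a composition, i.e. $\alpha_i \ge 2$ or $i = \ell(\alpha)$), and additionally when $j = 1$ we need $i = \ell(\alpha)$ so that strictness of the first column is preserved. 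This is exactly the ``removable cell'' condition dual to the $\subset_1$ relation of Theorem~\ref{thm:Pieri} read backwards. So I would first establish the recursion
\[
K_{\alpha, 1^n} = \sum_{\gamma} K_{\gamma, 1^{n-1}},
\]
where $\gamma$ ranges over compositions obtained from $\alpha$ by deleting one ``immaculate-removable'' cell.

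**Next I would** set up the induction on $n$. Write $F(\alpha) = n! / \prod_{c \in \alpha} h_\alpha(c)$ for the claimed count; the base case $n = 0$ (or $n=1$) is immediate. For the inductive step it suffices to verify the numerical identity
\[
F(\alpha) = \sum_{\gamma} F(\gamma),
\]
the sum being over the same set of removable-cell deletions, since then $K_{\alpha,1^n} = \sum_\gamma K_{\gamma, 1^{n-1}} = \sum_\gamma F(\gamma) = F(\alpha)$ by induction. To prove this identity I would divide through by $F(\alpha)$ and reduce it to showing
\[
\sum_{\gamma} \frac{1}{n} \prod_{c} \frac{h_\alpha(c)}{h_\gamma(c)} = 1,
\]
where for each removable cell the product is over $c \in \gamma$ and only finitely many hook lengths change. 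The subtlety compared to the classical hook-length formula is that deleting the last cell of row $i$ changes hooks in two different ways depending on whether that cell is in the first column: if it is in column $>1$, only hooks in row $i$ to its left change (each drops by $1$), exactly as in the ordinary case; if it is in column $1$ (so $i = \ell(\alpha)$ and $\alpha_i = 1$), the whole row disappears and the ``arm-less, leg-only'' hooks $h_\alpha((i',1)) = \alpha_{i'} + \cdots + \alpha_{\ell(\alpha)}$ in the first column for $i' < i$ each drop by $1$.

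**The main obstacle** will be proving this rational-function identity cleanly. My preferred route is to recognize it as an instance of a known combinatorial identity: after the standardization map of Definition~\ref{def:standardization}, standard immaculate tableaux of shape $\alpha$ are in bijection with linear extensions of a naturally associated forest-like poset — indeed, the first-column entries form a chain and each row hangs off its first-column entry as an antichain-over-a-point, so the relevant poset is a disjoint union of ``brooms'' (a chain with some incomparable elements attached at each node), and linear extensions of forests are counted by exactly the hook-length formula of Knuth (the tree hook-length formula $n!/\prod h(c)$). So the cleanest proof is: (1) show $\operatorname{std}$ gives a bijection between standard immaculate tableaux of shape $\alpha$ and linear extensions of the poset $P_\alpha$ whose Hasse diagram is the disjoint union over rows $i$ of the element $(i,1)$ covering $(i+1,1)$, together with $(i,j)$ covering $(i,j-1)$ for $j \ge 2$ — wait, one must be careful that the row entries are only weakly increasing and incomparable to each other and to later first-column entries, which forces $P_\alpha$ to be a forest with roots at the bottom of the first column; (2) invoke the forest hook-length formula, checking that the poset-theoretic hook of each element equals $h_\alpha(c)$ as defined. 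If a self-contained argument is preferred over citing the forest formula, I would instead prove the rational identity directly by induction, peeling off the bottom row of $\alpha$ (which is where most removable cells live) and handling the first-column contribution as a separate telescoping sum over the choices $\gamma = [\alpha_1, \ldots, \alpha_{\ell-1}]$ versus shortening the last row; I expect the bookkeeping there, rather than any conceptual difficulty, to be the real work.
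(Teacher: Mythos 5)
Your main route (the forest-poset argument, your option (b)) is correct in substance, but it is genuinely different from the paper's proof. The paper does not branch on the cell containing the largest entry at all: it strips the \emph{first row}. In a standard immaculate tableau of shape $[k,\alpha]\models n+k$, the first row consists of $1$ together with an arbitrary $(k-1)$-subset of $\{2,\dots,n+k\}$, and standardizing the remaining rows yields an arbitrary standard immaculate tableau of shape $\alpha$; this gives the recursion $K_{[k,\alpha],1^{n+k}}=\binom{n+k-1}{k-1}K_{\alpha,1^{n}}$, and induction on the number of parts finishes the proof, since the hooks of the cells in the first row of $[k,\alpha]$ are $n+k,k-1,k-2,\dots,1$ while the remaining hooks are those of $\alpha$. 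That argument is short and self-contained: no removable-cell recursion, no rational-function identity, and no appeal to Knuth's formula for forests. What your approach buys is a conceptual explanation (standard immaculate tableaux are linear extensions of a tree poset, so a hook formula is to be expected); what the paper's buys is a two-line bijection.

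Two caveats about your write-up. First, the rows do \emph{not} hang off the first column as antichains: in a standard immaculate tableau the row entries are distinct and weakly increasing, hence strictly increasing, so the cells of each row form a chain in $P_\alpha$. If they were antichains (``brooms''), the hook product would lose the factors $\alpha_i-j+1$ and the formula would be false. The cover relations you eventually write down (between $(i,1)$ and $(i+1,1)$, and between $(i,j)$ and $(i,j-1)$ for $j\ge 2$) are the correct ones; note that they make $P_\alpha$ a single tree threaded through the first column (connected, not a disjoint union over rows), in which every cell has a unique lower cover, so the forest hook-length formula applies with the hook of a cell equal to the size of its principal filter, which is exactly $h_\alpha(c)$ (namely $\alpha_i-j+1$ off the first column and $\alpha_i+\cdots+\alpha_{\ell(\alpha)}$ on it); also, no standardization map is needed, since a standard immaculate tableau literally is such a linear extension. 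With those corrections your route (b) is a complete proof. Second, your route (a) is only a plan as stated: the recursion $K_{\alpha,1^n}=\sum_\gamma K_{\gamma,1^{n-1}}$ over the covers $\gamma$ of $\alpha$ in $\mathfrak{P}$ is right (after your parenthetical self-correction of the removability condition to ``$\alpha_i\ge 2$ or $i=\ell(\alpha)$''), but the identity $\sum_\gamma F(\gamma)=F(\alpha)$ is the entire content and is left unverified; the paper's first-row recursion is precisely a device for avoiding that bookkeeping.
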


\begin{proof} Consider a standard immaculate tableau of shape $[k,\alpha] \models n+k$.  The first row contains $1$
and a subset $S$ of $k-1$ other integers from $\{2, 3, \ldots, n+k \}$.  
Moreover, if we standardize rows $2$ through $\ell(\alpha)+1$ of this tableau
then we have a standard immaculate tableau of shape $\alpha$.
This gives us a bijection between standard immaculate tableaux of shape $[k,\alpha]$ and the set of pairs $(S,T)$
where $S$ is a subset of $\{2, 3, \ldots, n+k \}$ of size $k-1$ and $T$ is a
standard immaculate tableau of shape $\alpha$.
This is a bijective proof of the recursion
\begin{equation}
K_{[k,\alpha],1^{n+k}} = \binomial{n+k-1}{k-1} K_{\alpha,1^{n}}~.
\end{equation}

Now the hook length formula follows by an induction argument on the length of the composition $\alpha$.
Assume that we know Equation
\eqref{eq:hooklength} holds for compositions $\alpha$ of length $\ell$, then the
hooks of the cells in the first row of $[k, \alpha]$ are (respectively) $n+k, k-1, k-2, \ldots, 2, 1.$
\begin{align*}
K_{[k,\alpha],1^{n+k}} &= \binomial{n+k-1}{k-1} K_{\alpha,1^{n}} = \binomial{n+k-1}{k-1}
\frac{n!}{\prod_{c \in \alpha} h_\alpha(c)}\\
&= \frac{(n+k-1)(n+k-2)\cdots(n+1)}{(k-1)!} \frac{n!}{\prod_{c \in \alpha} h_\alpha(c)}\\
&= \frac{(n+k)(n+k-1)(n+k-2)\cdots(n+1)}{(n+k)(k-1)!} \frac{n!}{\prod_{c \in \alpha} h_\alpha(c)}\\
&=  \frac{(n+k)!}{\prod_{c \in [k,\alpha]} h_\alpha(c)}~.
\end{align*}
This shows that (\ref{eq:hooklength}) holds for compositions of length $\ell+1$.  The base case for $\ell=0$
 holds trivially.  Therefore the hook length formula (\ref{eq:hooklength}) holds for all compositions.
\end{proof}

\begin{Example}
The hook length formula (\ref{eq:hooklength}) says that since the hooks of $[4,2,3]$ are given by
the entries in the diagram:
\[ \tikztableausmall{{9,3, 2, 1},{5, 1}, {3,2,1}} \]
The number of standard immaculate tableaux of shape $[4,2,3]$ is equal to
$$\frac{9!}{9 \cdot 3 \cdot 2 \cdot 1 \cdot 5 \cdot 1 \cdot 3 \cdot 2 \cdot 1} = 224~.$$
\end{Example}

\begin{Proposition}\label{prop:kostkamatrix}
Recall that $K_{\alpha, \beta}$ denote the number of immaculate tableaux of shape
$\alpha$ and content $\beta$. Then
\begin{enumerate}
\item $K_{\alpha, \alpha} = 1,$
\item $K_{\alpha, \beta} = 0$ unless $\alpha \geq_\ell \beta$,
\end{enumerate}
where $\geq_\ell$ represents the lexicographic order on compositions.
\end{Proposition}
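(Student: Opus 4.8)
The plan is to prove both statements directly from the combinatorial definition of immaculate tableaux, treating the two claims separately. For part (1), I would argue that the only immaculate tableau of shape $\alpha$ and content $\alpha$ is the one whose $i$-th row consists entirely of the entry $i$. To see this is forced, note that row $1$ must be weakly increasing and the first column must be strictly increasing from top to bottom; combined with the content constraint that exactly $\alpha_i$ boxes carry the label $i$, one shows by induction on the row index that the entry in position $(i,1)$ is at least $i$ (since the first-column entries $1,\dots,i$ are distinct positive integers), and simultaneously that row $i$ cannot use any label exceeding $i$ without leaving too few small labels for the rows above. Pinning down that the entry in box $(i,1)$ is exactly $i$ and that the row is constant then follows by a counting argument: all $\alpha_1$ copies of $1$ must lie in row $1$ (they cannot appear below the top row, as the first column is strictly increasing and rows are weakly increasing, so a $1$ anywhere forces it into row $1$), hence row $1$ is all $1$'s; removing row $1$ and relabeling reduces to the same statement for $[\alpha_2,\dots,\alpha_\ell]$, and induction finishes.

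For part (2), I would show that $K_{\alpha,\beta} = 0$ unless $\alpha \geq_\ell \beta$ by examining the first index $j$ at which $\alpha$ and $\beta$ differ. Suppose $K_{\alpha,\beta} \neq 0$ and let $T$ be an immaculate tableau of shape $\alpha$ and content $\beta$. The key observation is that all boxes labeled $1$ must occur in the first row (again because the first column is strictly increasing, so only box $(1,1)$ can carry a $1$, and weak increase along row $1$ forces all the $1$'s leftmost in that row); more generally, every label $i$ that appears must, by the strict-increase condition on the first column, appear in a row whose index is at most $i$ — equivalently the labels $1,\dots,i$ together occupy only rows $1,\dots,i$. Counting boxes, this gives $\beta_1 + \beta_2 + \cdots + \beta_i \leq \alpha_1 + \alpha_2 + \cdots + \alpha_i$ for every $i$. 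This chain of partial-sum inequalities is exactly the statement that $\alpha$ dominates $\beta$ in the relevant sense, and in particular at the first index of disagreement it forces $\alpha_j > \beta_j$, i.e. $\alpha >_\ell \beta$.

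I expect the main obstacle to be stating and exploiting the first-column constraint cleanly enough to get the partial-sum bounds: one must be careful that ``label $i$ occurs only in rows $\leq i$'' really does follow from strict increase down the first column together with weak increase along rows — the subtlety is that a box in row $r$ and column $>1$ has no direct column constraint, so one argues that if label $i$ appears in row $r$ then the first-column entry of row $r$ is $\leq i$ (since rows weakly increase) and the first-column entries of rows $1,\dots,r$ are strictly increasing positive integers bounded above by $i$, forcing $r \leq i$. Once that lemma is in hand, both parts are short. I would prove this first-column lemma as an explicit preliminary step, then deduce (2) via the partial-sum inequalities and deduce (1) either as the equality case of those inequalities or by the direct inductive peeling argument sketched above.
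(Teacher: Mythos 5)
Your proof is correct, and it rests on the same structural fact as the paper's: the strictly increasing first column, together with weak increase along rows, confines the label $i$ to rows $1,\dots,i$ (in particular all $1$'s lie in row $1$). The packaging differs slightly. The paper disposes of (1) by simply exhibiting the unique tableau with $\alpha_i$ copies of $i$ in row $i$, and proves (2) by contradiction: at the first index $i$ with $\alpha_i<\beta_i$ (and $\alpha_j=\beta_j$ for $j<i$), the rows above row $i$ are forced to be exactly filled by the labels $1,\dots,i-1$, so the $\beta_i$ copies of $i$ must all go into row $i$, which has only $\alpha_i$ cells. You instead isolate the first-column lemma explicitly and convert it into the chain of partial-sum inequalities $\beta_1+\cdots+\beta_i\le\alpha_1+\cdots+\alpha_i$ for all $i$, from which the lexicographic statement (and part (1), by peeling off the first row and inducting) follows. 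Your route yields a slightly stronger, dominance-type conclusion and avoids the contradiction/first-disagreement bookkeeping, at the cost of stating and proving the lemma separately; the paper's argument is shorter but leaves that lemma implicit in the phrase ``as above.'' Either way the content is the same, and your write-up correctly identifies the one point that needs care (a cell in column $>1$ has no column constraint, so the bound on its row index must be routed through the first entry of its row).
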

\begin{proof}
The first fact follows from definition: there is only one tableau with shape
and content $\alpha$, it consists of $\alpha_i$ many $i$'s in row $i$.
For the second statement, we argue by contradiction. Suppose there is such
a tableau and let $i$ be the first integer such that $\alpha_i < \beta_i$ (so that $\alpha_j
= \beta_j$ for $j < i$). As above, one must place $\beta_j$ many $j$'s in row
$j$, for $j<i$, filling all positions in the rows above row $i$. Then one must
place $\beta_i$ many $i$'s into row $i$, which contains only $\alpha_i$ spaces,
a contradiction.
\end{proof}

\subsubsection{Expansion of the homogeneous basis}

\begin{Proposition}\label{cor:Hexpansion}
The complete homogeneous basis $\HH_\alpha$ has a positive, uni-triangular expansion in the immaculate functions indexed by compositions. Specifically,
\[ \HH_\beta = \sum_{\alpha\geq_\ell \beta} K_{\alpha, \beta} \fS_\alpha.\]
\end{Proposition}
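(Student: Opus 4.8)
The plan is to induct on the length $m=\ell(\beta)$, peeling off the rightmost generator $\HH_{\beta_m}$ and expanding via the right-Pieri rule (Theorem~\ref{thm:Pieri}); the recursion this produces on the immaculate side must be matched against a parallel recursion for the numbers $K_{\alpha,\beta}$, which I would establish first by a combinatorial bijection.

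\textbf{Step 1: a recursion for immaculate Kostka numbers.} Write $\bar\beta=[\beta_1,\dots,\beta_{m-1}]$. I would prove
\[
K_{\alpha,\beta}=\sum_{\gamma\,\subset_{\beta_m}\,\alpha}K_{\gamma,\bar\beta}
\]
by the bijection sending an immaculate tableau $T$ of shape $\alpha$ and content $\beta$ to the pair $(\gamma,T')$, where $T'$ is $T$ with every cell labelled $m$ deleted and $\gamma=\shape(T')$. Since $m$ is the largest entry and rows weakly increase, the cells labelled $m$ in each row are a (possibly empty) suffix of that row, so $T'$ is a genuine left-justified diagram with $\gamma_j\leq\alpha_j$ for $j\leq\ell(\gamma)$; since the first column strictly increases with all entries at most $m$, at most one of its cells (necessarily the bottom one) equals $m$, so at most one row of $T$ is entirely $m$'s, whence $\gamma$ is a composition and $\ell(\alpha)\leq\ell(\gamma)+1$. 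Thus $\gamma\subset_{\beta_m}\alpha$ and $T'$ is immaculate of shape $\gamma$, content $\bar\beta$. Conversely, given $\gamma\subset_{\beta_m}\alpha$ and an immaculate tableau $T'$ of shape $\gamma$ and content $\bar\beta$, I would fill the cells of the diagram of $\alpha$ not in $\gamma$ with $m$'s: the cells added to an existing row $i$ occupy columns $\gamma_i+1,\dots,\alpha_i$ with $\gamma_i\geq1$, so rows stay weakly increasing and the first column is untouched there, while the (at most one) new bottom row is all $m$'s, with $m$ exceeding every first-column entry of $T'$ since $\bar\beta$ uses only values $<m$. These maps are mutually inverse.

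\textbf{Step 2: the induction.} The cases $m\leq1$ are immediate: $\HH_{[\,]}=1=\fS_{[\,]}$, and $\HH_{[\beta_1]}=\fS_{[\beta_1]}$ with $K_{[\beta_1],[\beta_1]}=1$, while $[\beta_1]$ is lexicographically maximal among compositions of $\beta_1$, so no other term appears. For $m\geq2$ I would write $\HH_\beta=\HH_{\bar\beta}\,\HH_{\beta_m}$, apply the inductive hypothesis to $\HH_{\bar\beta}$, and expand each $\fS_\gamma\HH_{\beta_m}$ by Theorem~\ref{thm:Pieri}:
\[
\HH_\beta=\sum_{\gamma\geq_\ell\bar\beta}K_{\gamma,\bar\beta}\,\fS_\gamma\HH_{\beta_m}
=\sum_{\alpha}\Bigl(\sum_{\gamma\,\subset_{\beta_m}\,\alpha}K_{\gamma,\bar\beta}\Bigr)\fS_\alpha,
\]
where the constraint $\gamma\geq_\ell\bar\beta$ is dropped since $K_{\gamma,\bar\beta}=0$ otherwise by Proposition~\ref{prop:kostkamatrix}(2). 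By Step~1 the inner sum is $K_{\alpha,\beta}$, and Proposition~\ref{prop:kostkamatrix} gives $K_{\alpha,\alpha}=1$ and $K_{\alpha,\beta}=0$ unless $\alpha\geq_\ell\beta$; hence $\HH_\beta=\sum_{\alpha\geq_\ell\beta}K_{\alpha,\beta}\,\fS_\alpha$ is positive and uni-triangular.

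\textbf{Main obstacle.} The only non-routine ingredient is the bijective recursion of Step~1, and the delicate point there is the interplay with the strictly-increasing first column: it is exactly what caps the number of entirely-$m$ rows at one and thus delivers the length bound $\ell(\alpha)\leq\ell(\gamma)+1$ built into the definition of $\subset_i$, and it is precisely the condition preserved when an all-$m$ bottom row is appended to $T'$. Once this is verified, everything else is a mechanical unwinding of the Pieri rule together with the triangularity already recorded in Proposition~\ref{prop:kostkamatrix}.
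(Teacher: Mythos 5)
Your proposal is correct and follows essentially the same route as the paper: the paper's proof is precisely the repeated application of the right-Pieri rule, observing that each multiplication by $\HH_{\beta_i}$ adds the cells labelled $i$ of an immaculate tableau, together with the triangularity from Proposition~\ref{prop:kostkamatrix}. Your Step~1 simply makes explicit, as a one-step bijection (strip the maximal entry $m$, using the strictly increasing first column to see that at most one row disappears), the correspondence between Pieri chains and immaculate tableaux that the paper asserts in a single sentence.
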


\begin{proof}
Follows from repeated application of the Pieri rule (Theorem \ref{thm:Pieri}). To see this, start with $H_{\beta_1} = \fS_{\beta_1}$, which is clearly a sum over immaculate tableaux of shape and content $\beta_1$. Each time one multiplies by $H_i$, by the Pieri rule, one adds to the immaculate tableaux in the index of summation, a set of entries labelled $i$ which satisfy the conditions of the definition of an immaculate tableau. The reason that
we need only sum over $\alpha \geq_\ell \beta$ is explained in Proposition \ref{prop:kostkamatrix}.
\end{proof}

\begin{Example}
Continuing from Example \ref{ex:immaculatetableau}, we see that \[ \HH_{3123} = \cdots + 5 \fS_{423} + \cdots.\]
\end{Example}

\begin{Corollary}
The $\{\fS_\alpha : \alpha \vDash n\}$  form a basis of $\Nsym_n$.
\end{Corollary}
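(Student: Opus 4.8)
The plan is to deduce the corollary directly from Proposition \ref{cor:Hexpansion}. That proposition asserts that the transition matrix expressing each $\HH_\beta$ in terms of the $\fS_\alpha$ (with $\alpha, \beta \models n$) is upper-triangular with respect to the lexicographic order $\geq_\ell$ on compositions of $n$, and has all diagonal entries equal to $K_{\alpha,\alpha} = 1$ (by Proposition \ref{prop:kostkamatrix}(1)). First I would fix $n$ and restrict attention to the finite-dimensional graded piece $\Nsym_n$, whose dimension is the number of compositions of $n$, since the $\{\HH_\alpha\}_{\alpha \models n}$ form a basis of $\Nsym_n$.

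Next I would order the compositions of $n$ in a way compatible with $\geq_\ell$ (any linear extension will do, but $\geq_\ell$ is already total on compositions of a fixed size) and observe that, under this ordering, the matrix $M = (K_{\alpha,\beta})$ is unitriangular: $M_{\alpha,\beta} = 0$ unless $\alpha \geq_\ell \beta$, and $M_{\beta,\beta} = 1$. A unitriangular matrix over $\mathbb{Q}$ is invertible (its determinant is $1$), so the change of basis from $\{\HH_\beta\}$ to $\{\fS_\alpha\}$ is invertible. Hence $\{\fS_\alpha\}_{\alpha \models n}$ spans $\Nsym_n$; since its cardinality equals $\dim \Nsym_n$, it is a basis. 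I would also remark that one can invert the relation explicitly to write each $\fS_\alpha$ as an integer combination of the $\HH_\beta$, which shows the $\fS_\alpha$ lie in $\Nsym_n$ (a priori they are only elements of $\Nsym$, built by applying the $\BB_m$ to $1$) and confirms gradedness.

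Honestly, there is no real obstacle here: the substantive content — the Pieri rule, the interpretation of the expansion coefficients as immaculate tableau counts, and the triangularity — has all been established in Theorem \ref{thm:Pieri} and Propositions \ref{prop:kostkamatrix} and \ref{cor:Hexpansion}. The only thing to be careful about is the logical point that $\fS_\alpha$ with $\alpha$ a composition is homogeneous of degree $|\alpha|$: each $\BB_m$ raises degree by $m$ (since $\HH_{m+i}$ raises degree by $m+i$ and $F_{1^i}^\perp$ lowers it by $i$), so $\fS_\alpha = \BB_{\alpha_1} \cdots \BB_{\alpha_m}(1) \in \Nsym_{|\alpha|}$. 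With that noted, the corollary follows immediately from the invertibility of a unitriangular matrix.
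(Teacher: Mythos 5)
Your argument is correct and is essentially the paper's own proof: the paper likewise deduces spanning from Proposition \ref{cor:Hexpansion} and concludes by counting, with your unitriangularity remarks just making explicit the invertibility that the paper leaves implicit. The additional observation that $\fS_\alpha$ is homogeneous of degree $|\alpha|$ is a reasonable (if unstated in the paper) bookkeeping point and does not change the approach.
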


\begin{proof}
    These elements span $\Nsym_n$ by Proposition \ref{cor:Hexpansion}
    and we have the correct number of elements, so they form a basis.
\end{proof}

\begin{Remark}
We will now use the term ``immaculate basis'' to mean those immaculate functions which are indexed by compositions.
\end{Remark}

\subsubsection{Expansion of the ribbon basis}

Next we will expand the ribbon functions in the immaculate basis. To do this, we first need the notions of  standardization and descent.

\begin{Definition} \label{def:descentSIT}
We say that a standard immaculate tableau
$T$ has a descent in position $i$
if  $(i+1)$ is in a row strictly lower than $i$ in $T$.  The symbol $D(T)$ will
represent the set of descents in $T$ and $\alpha(D(T))$ will represent the
corresponding composition.
\end{Definition}

\begin{Example}
    The standard immaculate tableau $S$ of
    Example~\ref{example:standardization} has descents in positions
    $\{2, 5, 11\}$.
    The descent composition of $S$ is then $[2,3,6,7]$.
\end{Example}

\begin{Remark}
The content of an immaculate tableau $T$ must be a refinement of the descent composition $\alpha(D(std(T)))$. This follows from the definition of standardization; the only places descents can happen in $std(T)$ are the final occurrence of a given content in $T$.
\end{Remark}

We let $L_{\alpha, \beta}$ denote the number of standard immaculate tableaux of shape $\alpha$ and descent composition $\beta$.

\begin{Lemma}\label{lemma:bijection}
 $$ K_{\alpha,\gamma} = \sum_{\beta\ge\gamma}   L_{\alpha,\beta}  .$$
\end{Lemma}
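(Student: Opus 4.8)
The plan is to set up a content-preserving bijection between immaculate tableaux of a fixed shape $\alpha$ and content $\gamma$ and pairs $(S, \delta)$ where $S$ is a standard immaculate tableau of shape $\alpha$ with $\delta$ a composition refined by the descent composition of $S$ and whose sorted refinement structure matches $\gamma$; more precisely, the claim to prove is that $K_{\alpha,\gamma}$ counts standard immaculate tableaux $S$ of shape $\alpha$ whose descent composition $\beta := \alpha(D(S))$ refines $\gamma$ (i.e. $\beta \geq \gamma$ in refinement order, equivalently $D(\gamma) \subseteq D(\beta)$). Summing over the possible $\beta$ then gives exactly $\sum_{\beta \geq \gamma} L_{\alpha,\beta}$.

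First I would define the map $T \mapsto std(T)$ using Definition \ref{def:standardization} and observe, via the Remark preceding the statement of Lemma \ref{lemma:bijection}, that the content $\gamma$ of $T$ is always a refinement of $\alpha(D(std(T)))$; this shows that every immaculate tableau of content $\gamma$ maps to a standard immaculate tableau whose descent composition $\beta$ satisfies $\beta \geq \gamma$. Next I would construct the inverse: given a standard immaculate tableau $S$ of shape $\alpha$ and a composition $\gamma$ with $\alpha(D(S)) \geq \gamma$, define $\text{destd}_\gamma(S)$ by replacing the entries $1, 2, \dots, |\alpha|$ with values according to which block of $\gamma$ they fall into (the entries $D(\gamma)_{j-1}+1, \dots, D(\gamma)_j$ all become $j$, using the subset-composition identification from Section \ref{sec:compositions}). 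The key point is that this assignment produces a valid immaculate tableau: rows stay weakly increasing because within a $\gamma$-block the standard entries that are replaced by a single value must occur in positions forming no descent of $S$, which (combined with row-weak-increase of $S$, automatic since $S$ is standard) keeps each row weakly increasing; and the first column stays strictly increasing because equal values in the first column would force a repeated entry, impossible since the first column of $S$ is strictly increasing.

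The two main verifications are that $std \circ \text{destd}_\gamma = \id$ on the relevant set and $\text{destd}_{\content(T)} \circ std = \id$, which reduces to checking that the standardization order of Definition \ref{def:standardization} reconstructs the original relative order within each content class, and conversely that re-standardizing a de-standardized tableau recovers $S$. The heart of the matter, and the step I expect to be the main obstacle, is proving that $\text{destd}_\gamma(S)$ has weakly increasing rows whenever $\alpha(D(S)) \geq \gamma$: I need to argue that if two cells in the same row of $S$ hold consecutive standard values $i, i+1$ that get merged into one $\gamma$-value, then $i$ is not a descent of $S$ (so that $i+1$ lies weakly below, hence in the same or a lower row — but since they are in the same row, fine), and more generally that there is no ``inversion'' forced inside a block; this is precisely where the hypothesis $D(\gamma) \subseteq D(\alpha(D(S)))$ is used, since a descent of $S$ landing strictly inside a $\gamma$-block would contradict $D(\gamma) \subseteq D(\alpha(D(S)))$. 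Once this local compatibility is established, the bijection is immediate and the counting identity $K_{\alpha,\gamma} = \sum_{\beta \geq \gamma} L_{\alpha,\beta}$ follows by partitioning the standard immaculate tableaux according to their exact descent composition $\beta$.
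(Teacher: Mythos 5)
Your overall strategy is exactly the paper's: the published proof consists of the single assertion that standardization is a bijection between immaculate tableaux of shape $\alpha$ and content $\gamma$ and standard immaculate tableaux of shape $\alpha$ with descent composition $\beta \geq \gamma$, and your fleshed-out version with an explicit de-standardization inverse is the right way to substantiate that sentence. However, two points need repair. First, you have the refinement direction backwards. With the paper's convention ($\beta \geq \gamma$ iff $D(\beta) \subseteq D(\gamma)$), the relevant condition is that the \emph{content} $\gamma$ refines the descent composition $\beta = \alpha(D(S))$, i.e.\ $D(\beta) \subseteq D(\gamma)$: every descent of $S$ occurs at the end of a $\gamma$-block. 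You state the hypothesis as ``$\beta$ refines $\gamma$, equivalently $D(\gamma) \subseteq D(\beta)$,'' which is the opposite containment (and the same reversal appears when you say that ``not a descent'' puts $i+1$ weakly \emph{below} $i$; it puts it weakly above). The fact you actually invoke later --- no descent of $S$ may land strictly inside a $\gamma$-block --- is precisely $D(\beta) \subseteq D(\gamma)$, so the argument is salvageable, but as written the hypothesis and its use do not match, and with the containment you wrote the inverse map genuinely fails: a descent inside a block destroys both the immaculate condition and the identity $\mathrm{std}\circ\mathrm{destd}_\gamma = \mathrm{id}$.

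Second, your justification that the first column of $\mathrm{destd}_\gamma(S)$ stays strictly increasing is not an argument: equal values in the first column do not ``force a repeated entry'' of $S$; they simply mean two distinct first-column entries of $S$ fall in the same $\gamma$-block, which is perfectly compatible with the first column of $S$ being strictly increasing. The correct reason again uses the descent condition: if $i<j$ are first-column entries lying in the same $\gamma$-block, then $j$ sits strictly below $i$ (the first column increases downwards), so some $p\in\{i,\dots,j-1\}$ is a descent of $S$ strictly inside the block, contradicting $D(\beta) \subseteq D(\gamma)$. (Conversely, the row condition needs no descent hypothesis at all: any weakly monotone relabeling of a tableau with strictly increasing rows has weakly increasing rows.) With these two repairs your bijection is correct, and partitioning standard immaculate tableaux by their exact descent composition yields $K_{\alpha,\gamma} = \sum_{\beta\geq\gamma} L_{\alpha,\beta}$ as in the paper.
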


\begin{proof} Standardization provides a bijection between the set of immaculate tableaux of shape $\alpha$ and content $\gamma$ and the set of standard immaculate tableaux of shape $\alpha$ and descent composition $\beta \geq \gamma$. The result follows from this bijection.
\end{proof}

\begin{Example}
The five immaculate tableaux of shape $[4,2,3]$ and content $[3,1,2,3]$
$$
\begin{array}{ccccc}
\tikztableausmall{{1,1,1, 3},{2, 3}, {4,4,4}} &
\tikztableausmall{{1,1,1, 3},{2, 4}, {3,4,4}} &
\tikztableausmall{{1,1,1, 4},{2, 3}, {3,4,4}} &
\tikztableausmall{{1,1,1, 4},{2, 4}, {3,3,4}} &
\tikztableausmall{{1,1,1, 2},{3, 3}, {4,4,4}} 
\end{array}
$$
are bijected, under standardization, with the five standard immaculate tableaux
of shape $[4,2,3]$ and descent composition $\beta \geq [3,1,2,3]$ below.
$$
\begin{array}{ccccc}
\tikztableausmall{{1,2, 3, 6},{4, 5}, {7,8,9}} &
\tikztableausmall{{1,2, 3, 6},{4, 9}, {5,7,8}} &
\tikztableausmall{{1,2 , 3, 9},{4,6}, {5,7,8}} &
\tikztableausmall{{1,2, 3, 9},{4, 8}, {5,6,7}} &
\tikztableausmall{{1,2, 3, 4},{5, 6}, {7,8,9}} \\
[12 pt][3,3,3] & [3,1,2,3] & [3,1,2,3] & [3,1,5] & [4,2,3]
\end{array}
$$
\end{Example}

\begin{Theorem}\label{thm:Rpositive}
The ribbon function $R_\beta$ has a positive expansion in the immaculate basis. Specifically 
\[ R_\beta = \sum_{\alpha\geq_\ell\beta} L_{\alpha, \beta} \fS_\alpha.\]
\end{Theorem}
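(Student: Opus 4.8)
The plan is to invert the triangular change of basis already established. By Proposition~\ref{cor:Hexpansion}, the transition matrix $(K_{\alpha,\beta})$ from the immaculate basis to the complete homogeneous basis is uni-triangular with respect to lexicographic order, hence invertible over $\ZZ$; write $\fS_\alpha = \sum_{\gamma \geq_\ell \alpha} \tilde K_{\alpha,\gamma} \HH_\gamma$ for the inverse matrix. Then, using the defining expansion $\HH_\alpha = \sum_{\beta \geq \alpha} R_\beta$ of the ribbon basis (Equation~\eqref{def:Ribbon}), one computes $\HH_\beta = \sum_{\alpha \geq_\ell \beta} K_{\alpha,\beta} \fS_\alpha$ in two ways; the cleanest route is to start from the immaculate side, substitute $R_\beta = \sum_{\beta \leq \gamma}(-1)^{\ell(\beta)-\ell(\gamma)}\HH_\gamma$, and collect coefficients.

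Concretely, I would expand
\[
\sum_{\alpha \geq_\ell \beta} L_{\alpha,\beta}\, \fS_\alpha
= \sum_{\alpha}\sum_{\gamma} L_{\alpha,\beta}\, \tilde K_{\alpha,\gamma}\, \HH_\gamma,
\]
and show this equals $R_\beta = \sum_{\gamma \geq \beta}(-1)^{\ell(\beta)-\ell(\gamma)}\HH_\gamma$. Matching coefficients of $\HH_\gamma$, this reduces to the identity $\sum_{\alpha} L_{\alpha,\beta}\,\tilde K_{\alpha,\gamma} = (-1)^{\ell(\beta)-\ell(\gamma)}[\beta \leq \gamma]$. The slicker approach, which avoids ever writing down $\tilde K$, is to verify the claim by pairing with $\HH$ and using Proposition~\ref{cor:Hexpansion} together with Lemma~\ref{lemma:bijection}: set $R'_\beta := \sum_{\alpha \geq_\ell \beta} L_{\alpha,\beta}\fS_\alpha$, then use $\HH_\gamma = \sum_{\delta \geq_\ell \gamma} K_{\delta,\gamma}\fS_\delta$ is not quite what I want — instead I would apply the \emph{other} direction: from $\fS_\alpha = \sum_{\gamma}\tilde K_{\alpha,\gamma}\HH_\gamma$ and Lemma~\ref{lemma:bijection} (which says $K_{\alpha,\gamma} = \sum_{\beta \geq \gamma} L_{\alpha,\beta}$), the M\"obius inversion built into the ribbon/homogeneous relation should make everything collapse.

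The most transparent execution, and the one I would actually write, is this: it suffices to check that the proposed formula, when summed against the inverse transition, reproduces \eqref{def:Ribbon}. Since $\HH_\beta = \sum_{\alpha} K_{\alpha,\beta}\fS_\alpha$ and $\HH_\beta = \sum_{\gamma \geq \beta} R_\gamma$, and since $R_\gamma = \sum_{\alpha} L_{\alpha,\gamma}\fS_\alpha$ is exactly what we want to prove, substituting the conjectured expression gives $\sum_{\gamma \geq \beta}\sum_\alpha L_{\alpha,\gamma}\fS_\alpha = \sum_\alpha \big(\sum_{\gamma \geq \beta} L_{\alpha,\gamma}\big)\fS_\alpha = \sum_\alpha K_{\alpha,\beta}\fS_\alpha = \HH_\beta$, where the middle equality is precisely Lemma~\ref{lemma:bijection}. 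Because $\{\fS_\alpha\}$ is a basis, the relation $\sum_{\gamma\geq\beta} R'_\gamma = \HH_\beta$ holds, and by the defining property \eqref{def:Ribbon} of the ribbon basis (uniqueness of the solution to that triangular system) we conclude $R'_\beta = R_\beta$ for all $\beta$.

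The one genuine thing to verify carefully is that the summation range in the conjectured formula is compatible — i.e., that $L_{\alpha,\gamma} \neq 0$ forces $\alpha \geq_\ell \gamma$, so that after summing over $\gamma \geq \beta$ we indeed recover all and only the terms $\alpha \geq_\ell \beta$ appearing in $K_{\alpha,\beta}$; this follows from Proposition~\ref{prop:kostkamatrix}(2) combined with Lemma~\ref{lemma:bijection}, since $L_{\alpha,\gamma} \leq K_{\alpha,\gamma}$. So the main (and only) obstacle is bookkeeping with the two partial orders ($\leq$ = refinement, $\leq_\ell$ = lexicographic) and confirming that the triangular system \eqref{def:Ribbon} has a unique solution, which is immediate since refinement order is a finite poset with a maximum element $[n]$. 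There is no hard analytic or combinatorial content beyond Lemma~\ref{lemma:bijection}.
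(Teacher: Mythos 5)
Your ``transparent execution'' is correct and is essentially the paper's own proof: both substitute Lemma~\ref{lemma:bijection} into the expansion of Proposition~\ref{cor:Hexpansion}, regroup to recover the defining triangular relation \eqref{def:Ribbon} (whose unique solvability the paper leaves implicit and you make explicit), and restrict to $\alpha \geq_\ell \beta$ via $L_{\alpha,\beta} \leq K_{\alpha,\beta}$ and Proposition~\ref{prop:kostkamatrix}. The preliminary musings about the inverse matrix $\tilde K$ are unnecessary detours, but the argument you say you would actually write is sound and matches the paper.
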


\begin{proof}
By Proposition \ref{cor:Hexpansion}, we know that $\HH_\gamma = \sum_\alpha K_{\alpha, \gamma} \fS_\alpha$, and we can substitute Lemma \ref{lemma:bijection} and obtain:
\[\HH_\gamma = \sum_\alpha K_{\alpha, \gamma} \fS_\alpha = \sum_\alpha \sum_{\beta \geq \gamma} L_{\alpha, \beta} \fS_\alpha  = \sum_{\beta \geq \gamma} \left(\sum_{\alpha} L_{\alpha,\beta} \fS_\alpha\right),\] which is the defining relation for the ribbon basis (see Equation \ref{def:Ribbon}).
We know that $L_{\alpha, \beta} \leq K_{\alpha, \beta}$, hence we need only sum over $\alpha \geq_\ell \beta$.
\end{proof}

\begin{Example}
There are eight standard immaculate tableaux with descent composition $[2,2,2]$, giving the expansion of $R_{222}$ into the immaculate basis.
$$
\begin{array}{cccccc}
&
\tikztableausmall{{1,2},{3, 4}, {5,6}} &
\tikztableausmall{{1,2},{3, 4,6}, {5}} &
\tikztableausmall{{1,2 , 4},{3}, {5,6}} &
\tikztableausmall{{1,2, 4},{3,6}, {5}}
\tikztableausmall{{1,2, 6},{3,4}, {5}}
\\ R_{222} = &
\fS_{222} &+\, \fS_{231} & +\, \fS_{312} &+\,2\fS_{321}
\end{array}
$$
$$
\begin{array}{cccc}
&
\tikztableausmall{{1,2,4},{3,5,6}}&
\tikztableausmall{{1,2 ,4 ,6},{3}, {5}}&
\tikztableausmall{{1,2, 4, 6},{3,5}}\\
&+\,\fS_{33} &+\, \fS_{411} &+\, \fS_{42}
\end{array}
$$
\end{Example}
\subsection{Jacobi-Trudi rule for $\Nsym$}

Another compelling reason to study the immaculate functions is that they have an expansion in
the $\HH_\alpha$ basis that makes them a clear analogue of the Jacobi-Trudi rule
of Definition \ref{def:JTformula}.

\begin{Theorem}\label{thm:JT} For $\alpha  \in  \ZZ^m$:
\begin{equation}\label{eq:JTformula}
\fS_\alpha  = \sum_{\sigma \in S_m} (-1)^\sigma \HH_{\alpha_1+\sigma_1 -1, \alpha_2 + \sigma_2 -2, \dots, \alpha_m + \sigma_m - m},
\end{equation}
where we have used the convention that $\HH_0 = 1$ and $\HH_{-m} = 0$ for $m>0$.
\end{Theorem}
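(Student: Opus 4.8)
The plan is to induct on $m$, establishing the statement directly for all integer tuples $\alpha \in \ZZ^m$ (which is what \eqref{eq:JTformula} claims). Write $J_\alpha$ for the right-hand side of \eqref{eq:JTformula}, read with the conventions $\HH_0 = 1$, $\HH_{-r}=0$. The base case $m=1$ is immediate, since $\fS_{[a]} = \BB_a(1) = \HH_a = J_{[a]}$. For the inductive step, put $\overline{\alpha} = [\alpha_2,\dots,\alpha_m]$; then $\fS_\alpha = \BB_{\alpha_1}(\fS_{\overline{\alpha}}) = \BB_{\alpha_1}(J_{\overline{\alpha}})$ by the inductive hypothesis, so it suffices to prove the operator identity $\BB_a(J_\gamma) = J_{[a,\gamma]}$ for every $a\in\ZZ$ and $\gamma \in \ZZ^{m-1}$.

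To compare the two sides, I would first expand $J_{[a,\gamma]}$ in the spirit of a cofactor expansion of a determinant along its first row. Writing each $\sigma \in S_m$ uniquely as a value $v = \sigma_1 \in \{1,\dots,m\}$ together with the permutation $\pi \in S_{m-1}$ recording the relative order of $\sigma_2,\dots,\sigma_m$ inside $\{1,\dots,m\}\setminus\{v\}$, one has $(-1)^\sigma = (-1)^{v-1}(-1)^\pi$, and with $i = v-1$ this gives
\[ J_{[a,\gamma]} = \sum_{i=0}^{m-1} (-1)^i \HH_{a+i} \Bigl( \sum_{\pi \in S_{m-1}} (-1)^\pi \HH_{(\gamma_k + \pi_k - k - [\pi_k \le i])_{k=1}^{m-1}} \Bigr), \]
where $[\pi_k\le i]$ is $1$ if $\pi_k \le i$ and $0$ otherwise. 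On the other hand, from $\BB_a = \sum_{i\ge 0}(-1)^i \HH_{a+i} F_{1^i}^\perp$ together with the expansion of $F_{1^i}^\perp$ on the $\HH$-basis (Corollary~\ref{lemma:ehperp}, which still holds for tuples having zero or negative entries, since $\HH_0=1$ and $\HH_{<0}=0$ make its summands collapse correctly), one gets, after discarding the vanishing terms with $i \ge m$,
\[ \BB_a(J_\gamma) = \sum_{i=0}^{m-1} (-1)^i \HH_{a+i} \Bigl( \sum_{\pi \in S_{m-1}} (-1)^\pi \sum_{\substack{\varepsilon \in \{0,1\}^{m-1}\\ |\varepsilon| = i}} \HH_{(\gamma_k + \pi_k - k - \varepsilon_k)_{k=1}^{m-1}} \Bigr). \]
Comparing the two displays term by term in $i$, the identity $\BB_a(J_\gamma) = J_{[a,\gamma]}$ reduces to the combinatorial claim that, for each fixed $i$, in the signed double sum over all pairs $(\pi,\varepsilon)$ with $|\varepsilon|=i$ everything cancels except the pairs with $\varepsilon_k = [\pi_k\le i]$ for all $k$ (which biject with $S_{m-1}$).

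This cancellation is the heart of the matter, and the step I expect to be the main obstacle. I would prove it by a sign-reversing involution on the ``bad'' pairs — those with $\varepsilon_k \ne [\pi_k \le i]$ for some $k$ — that preserves the tuple $(\gamma_k + \pi_k - k - \varepsilon_k)_k$ (hence the corresponding $\HH$-term). Given $(\pi,\varepsilon)$, record for each value $v \in \{1,\dots,m-1\}$ the bit $b_v := \varepsilon_{\pi^{-1}(v)}$; since $\sum_v b_v = i$, the pair is bad precisely when the word $b_1 b_2 \cdots b_{m-1}$ is not weakly decreasing, i.e.\ has an ascent $b_t = 0$, $b_{t+1}=1$. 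Taking the smallest such $t$, the involution transposes the values $t$ and $t+1$ in $\pi$ and simultaneously flips the $\varepsilon$-bit at the old position of $t$ from $0$ to $1$ and the $\varepsilon$-bit at the old position of $t+1$ from $1$ to $0$. One checks readily that this is a well-defined involution, that it keeps $|\varepsilon|=i$, that it leaves the tuple $(\gamma_k + \pi_k - k - \varepsilon_k)_k$ unchanged, that it multiplies $(-1)^\pi$ by $-1$, and that its fixed points are exactly the pairs with $\varepsilon_k = [\pi_k\le i]$. This establishes the reduced identity for every $i$, hence $\BB_a(J_\gamma) = J_{[a,\gamma]}$, completing the induction.
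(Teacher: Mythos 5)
Your proposal is correct, but it takes a genuinely different route from the paper. You argue by induction on $m$, reducing everything to the operator identity $\BB_a(J_\gamma)=J_{[a,\gamma]}$, which you verify by expanding $J_{[a,\gamma]}$ cofactor-style along its first row, expanding $\BB_a(J_\gamma)$ via the action of $F_{1^i}^\perp$ on the $\HH$-basis (Corollary~\ref{lemma:ehperp}, extended to integer tuples --- a small but genuinely needed check, which you correctly flag and which does go through because entries $0$ and negative entries collapse consistently), and then cancelling the unwanted terms by an explicit sign-reversing involution on pairs $(\pi,\varepsilon)$; I checked the involution: it fixes the word $b_v=\varepsilon_{\pi^{-1}(v)}$, hence fixes the chosen ascent $t$ and is a genuine involution, it preserves the index tuple $(\gamma_k+\pi_k-k-\varepsilon_k)_k$ and reverses $(-1)^\pi$, and its survivors are exactly the pairs with $\varepsilon_k=[\pi_k\le i]$, which reproduce the cofactor expansion. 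The paper instead proves the theorem by a generating-function (vertex-operator) computation: it packages the creation operators into $\BB(z)=\OM_z\EEE^\perp_{-1/z}$, establishes the kernel identity $\BB(z_1)\cdots\BB(z_m)\,1=\OM_{z_1,\dots,z_m}\prod_{i<j}(1-z_j/z_i)$, and extracts coefficients using the Vandermonde determinant. What the paper's route buys is reusable machinery: the same identities immediately yield the skew Pieri rules for $F_{1^r}^\perp$ and $F_r^\perp$ on the immaculate basis (Propositions~\ref{prop:eperponfS} and~\ref{prop:hperponfS}). What your route buys is elementarity and transparency: it uses only Lemma~\ref{lemma:Frule}/Corollary~\ref{lemma:ehperp}, avoids the $\OM_Y$ formalism and the Vandermonde identity, and makes explicit the cancellation behind the heuristic, stated as a Remark after the theorem, that $\fS_\alpha$ is the ``expansion of the determinant about the first row.''
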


\begin{Remark}
This sum is a non-commutative analogue of the determinant of the following matrix:

\[
\begin{bmatrix} 
\HH_{\alpha_1}&\HH_{\alpha_1+1}&\cdots&\HH_{\alpha_1+\ell-1}\\
\HH_{\alpha_2-1}&\HH_{\alpha_2}&\cdots&\HH_{\alpha_2+\ell-2}\\
\vdots&\vdots&\ddots&\vdots\\
\HH_{\alpha_\ell-\ell+1}& \HH_{\alpha_\ell-\ell+2}&\cdots&\HH_{\alpha_\ell}\\
\end{bmatrix}.\]
The non-commutative analogue of the determinant corresponds to expanding the
determinant of this matrix about the first row and multiplying those elements
on the left.
\end{Remark}

\begin{Remark} One might ask why one would naturally
expand about the first row rather than, say, the first column or the last row.  What
we considered to be the natural analogue of expanding about the first column however
is not a basis since, for instance, the matrix corresponding to $\alpha = [1,2]$
would be $0$ under this analogue.
\end{Remark}

Before we begin with the proof we introduce some notation that will prove useful
in our development of this identity.
Let $z$ be a single variable and define the operator $\BB(z) = \sum_{r \in \ZZ}
z^r \BB_r$. We will show that the coefficient of 
$z_1^{\alpha_1} z_2^{\alpha_2} \cdots z_m^{\alpha_m}$
in $\BB(z_1) \BB(z_2) \dots \BB(z_m) (1)$ is the right hand side
of \eqref{eq:JTformula}.

Using the notation developed in Section \ref{sec:nsymqsymcalc},
\begin{align}
\BB(z)
    &= \sum_{m \in \ZZ} z^m \BB_m
    \notag \\
    &= \sum_{m \in \ZZ} z^m \sum_{i \geq 0} (-1)^i \HH_{m+i} F_{1^i}^\perp
    \notag \\
    &= \sum_{d \geq 0} \sum_{i \geq 0} z^{d-i} (-1)^i \HH_{d} F_{1^i}^\perp
    \notag \\
    &= \left(\sum_{d \geq 0} z^d \HH_{d}\right)
        \left(\sum_{i \geq 0} (-1/z)^i F_{1^i}^\perp\right)\nonumber\\
    &= \OM_z \EEE_{-1/z}^\perp~.
    \label{Bz-factorization}
\end{align}

Hence, for every indeterminate $z_r$ and every alphabet $Y$ of commuting indeterminates which commute with $z_r$ but are distinct from $z_r$, we have
\begin{align}
    \label{Bz-action}
    \BB(z_r) \OM_Y
    = \OM_{z_r} \EEE_{-1/z_r}^\perp \OM_Y
    = \OM_{z_r} \OM_Y \prod_{y \in Y} (1-y/z_r)
    = \OM_{z_r, Y} \prod_{y \in Y} (1-y/z_r)~.
\end{align}

Repeated application of \eqref{Bz-action} yields that for any alphabet $Y$ of commuting indeterminates which commute with $z_1, z_2,\cdots, z_r$ but are distinct from each of them, we have
\begin{align}
    \label{prodBz-identity}
    \BB(z_1) \BB(z_{2}) \cdots, \BB(z_{m}) \ \OM_Y
    =
    \OM_{z_1, z_2, \dots, z_m, Y}
    \prod_{r=1}^m \left(\prod_{y \in \{z_{r+1}, \dots z_{m}\} \cup Y} (1 - y/z_r)\right)~.
\end{align}

\begin{proof}[Proof of Theorem \ref{thm:JT}]
Taking $Y = \emptyset$ in \eqref{prodBz-identity}, we obtain the identity
\begin{align}
    \label{prodBz-specialization}
    \BB(z_1) \BB(z_{2}) \cdots \BB(z_{m}) \,1
    =
    \OM_{z_1, z_2, \dots, z_m}
    \prod_{1 \leq i < j \leq m} \left(1 - z_j / z_i\right).
\end{align}
Using the Vandermonde determinant identity,
$$\prod_{1 \leq i < j \leq m} (z_i - z_j) = \sum_{\sigma \in S_m} (-1)^\sigma
z_1^{m-\sigma_1} z_2^{m-\sigma_2} \cdots z_m^{m-\sigma_m}~,$$
we can rewrite \eqref{prodBz-specialization} as
\begin{align*}
\BB(z_1) \BB(z_2) \cdots \BB(z_m)\, 1
&= \OM_{z_1,z_2,\ldots,z_m} \prod_{1\leq i<j\leq m} (1-z_j/z_i)\\
&= \OM_{z_1,z_2,\ldots,z_m} z_1^{1-m} z_2^{2-m} \cdots z_m^{m-m} \prod_{1\leq i<j\leq m} (z_i-z_j)\\
&= \OM_{z_1,z_2,\ldots,z_m} \sum_{\sigma \in S_m} (-1)^\sigma
z_1^{1-\sigma_1} z_2^{2-\sigma_2} \cdots z_m^{m-\sigma_m}~.
\end{align*}
So the coefficient of $z_1^{\alpha_1} z_2^{\alpha_2} \cdots z_m^{\alpha_m}$
in $\BB(z_1) \cdots \BB(z_m) (1)$ is equal to
\begin{align*}
\fS_\alpha &= \BB_{\alpha_1} \BB_{\alpha_2} \cdots \BB_{\alpha_m} \, 1 
= \BB(z_1) \BB(z_2) \cdots \BB(z_m)\, 1 \coeff_{z_1^{\alpha_1} z_2^{\alpha_2} \cdots 
z_m^{\alpha_m}}\\
&=\OM_{z_1,z_2,\ldots,z_m} \sum_{\sigma \in S_m} (-1)^\sigma
z_1^{1-\sigma_1} z_2^{2-\sigma_2} \cdots z_m^{m-\sigma_m} \coeff_{z_1^{\alpha_1} z_2^{\alpha_2} \cdots 
z_m^{\alpha_m}}\\
&= \sum_{\sigma \in S_m} (-1)^\sigma \OM_{z_1,z_2,\ldots,z_m}
\coeff_{z_1^{\alpha_1+\sigma_1-1} z_2^{\alpha_2+\sigma_2-2} \cdots 
z_m^{\alpha_m+\sigma_m-m}}\\
&=\sum_{\sigma \in S_m} (-1)^\sigma
\HH_{\alpha_1+\sigma_1-1,\alpha_2+\sigma_2-2, \cdots, \alpha_m+\sigma_m-m}~.
\qedhere
\end{align*}
\end{proof}

Of course, the original reason for considering this definition is
the property that they are a lift of the symmetric function corresponding
to the Jacobi-Trudi matrix.

\begin{Corollary}\label{thm:projection}
    For any composition $\alpha$, we have $\chi( \fS_\alpha) = s_\alpha$.
\end{Corollary}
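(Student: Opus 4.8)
The plan is to deduce the identity immediately from the non-commutative Jacobi--Trudi expansion of Theorem~\ref{thm:JT} by applying the forgetful map $\chi$ and recognizing the output as the Leibniz expansion of the determinant in Definition~\ref{def:JTformula}. Concretely, Theorem~\ref{thm:JT} gives
\[ \fS_\alpha = \sum_{\sigma \in S_m} (-1)^\sigma \HH_{\alpha_1 + \sigma_1 - 1,\ \alpha_2 + \sigma_2 - 2,\ \dots,\ \alpha_m + \sigma_m - m}, \]
and since $\chi$ is linear and satisfies $\chi(\HH_\beta) = h_{\beta_1} \cdots h_{\beta_{\ell(\beta)}}$ by \eqref{def:chi}, applying it term-by-term yields $\chi(\fS_\alpha) = \sum_{\sigma \in S_m} (-1)^\sigma \prod_{i=1}^m h_{\alpha_i + \sigma_i - i}$, where the product is now taken in the commutative ring $\sym$ and hence may be reordered freely.

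The remaining step is a bookkeeping check that the conventions on the two sides agree. On the $\Nsym$ side one uses $\HH_0 = 1$ and $\HH_{-r} = 0$ for $r>0$; under $\chi$ these map precisely to the conventions $h_0 = 1$ and $h_{-r} = 0$ of Definition~\ref{def:JTformula}. So a monomial $\HH_{\alpha_1+\sigma_1-1,\dots,\alpha_m+\sigma_m-m}$ with a zero entry maps to the corresponding product of $h$'s with that factor deleted, and one with a negative entry maps to $0$, matching the symmetric-function side verbatim. Finally, $\sum_{\sigma \in S_m} (-1)^\sigma \prod_{i=1}^m h_{\alpha_i + \sigma_i - i}$ is exactly the permutation expansion of $\det[h_{\alpha_i + j - i}]_{1 \le i,j \le m}$, with the value $\sigma_i = \sigma(i)$ serving as the column index $j$ in the $(i,j)$-entry $h_{\alpha_i + j - i}$. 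Thus $\chi(\fS_\alpha) = s_\alpha$. There is no real obstacle here once Theorem~\ref{thm:JT} is available; the only points requiring care are that the reindexing of $\sigma$ is consistent with the first-row expansion convention described in the remark following Theorem~\ref{thm:JT}, and that degenerate (zero or negative) subscripts are treated the same way on both sides.

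An alternative, more operator-theoretic route would be to prove directly that $\chi \circ \BB_m = \B_m \circ \chi$ as maps $\Nsym \to \sym$, and then conclude $\chi(\fS_\alpha) = \chi(\BB_{\alpha_1} \cdots \BB_{\alpha_m}(1)) = \B_{\alpha_1} \cdots \B_{\alpha_m}(1) = s_\alpha$ by Bernstein's theorem (Theorem~\ref{th:bern}). This would require verifying that $\chi$ intertwines $F_{1^i}^\perp$ on $\Nsym$ with $e_i^\perp$ on $\sym$, together with $\chi(\HH_{m+i}) = h_{m+i}$, which is true but takes a little more setup than the Jacobi--Trudi argument; I would therefore favor the first approach.
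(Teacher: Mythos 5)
Your proposal is correct and matches the paper's own argument: the paper proves the corollary by applying $\chi$ to the expansion of Theorem~\ref{thm:JT} and comparing with the determinantal Definition~\ref{def:JTformula}, using $\chi(\HH_i)=h_i$, exactly as in your first route. Your convention-checking and the alternative intertwining argument $\chi\circ\BB_m=\B_m\circ\chi$ are fine additional remarks but not needed beyond what the paper does.
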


\begin{proof} 
This follows from Definition \ref{def:JTformula} of the Jacobi-Trudi rule 
and the fact that $\chi(\HH_i) = h_i$.
\end{proof}

\subsection{A Pieri rule for the elementary basis}

Theorem \ref{thm:JT} shows that in particular for the case
of $\alpha = [1^n]$ that $\fS_{1^n}$ is the usual analogue of the elementary
generators of $\sym$.  The elementary generators of $\Nsym$ are the elements
$E_i$ which satisfy that $E_0=1$, and for $n \ge 1$,
\[ E_n := \sum_{i=1}^n (-1)^{i-1} \HH_{i} E_{n-i}.\]
The antipode map is both an algebra antimorphism and a coalgebra morphism so
that $S( H_{n}  ) = (-1)^{n} E_{n}$.

\begin{Corollary}\label{cor:equaltoEn}
For $n\geq0$, 
\[ \fS_{1^n} = \sum_{\alpha \models n} (-1)^{n-\ell(\alpha)} H_{\alpha} \]
and as a consequence, $F_{1^r}^\perp( \fS_{1^n}) = \fS_{1^{n-r}}$ and
for $s>1$, $F_{s}^\perp( \fS_{1^n} ) = 0$.
\end{Corollary}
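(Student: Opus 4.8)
The plan is to establish the formula $\fS_{1^n} = \sum_{\alpha \models n} (-1)^{n-\ell(\alpha)} \HH_\alpha$ first, and then to deduce the two $\perp$-statements as easy corollaries. For the main formula I would apply Theorem~\ref{thm:JT} with $\alpha = [1^n]$: the Jacobi--Trudi expansion gives $\fS_{1^n} = \sum_{\sigma \in S_n} (-1)^\sigma \HH_{1+\sigma_1-1, 1+\sigma_2-2, \dots, 1+\sigma_n-n} = \sum_{\sigma \in S_n} (-1)^\sigma \HH_{\sigma_1, \sigma_2-1, \dots, \sigma_n-(n-1)}$. Since $\HH_j = 0$ for $j<0$ and $\HH_0 = 1$, the tuple $(\sigma_i - (i-1))_{i=1}^n$ contributes a nonzero term only when $\sigma_i \geq i-1$ for all $i$, i.e.\ when $\sigma$ has no descent drop of more than one at a time; these $\sigma$ are exactly those obtained from the identity by a sequence of commuting adjacent moves, and a standard bijection (the same one underlying the classical identity $s_{1^n} = e_n$) matches them with compositions of $n$. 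Concretely, deleting the parts equal to $0$ from $(\sigma_i - (i-1))_i$ produces a composition $\alpha \models n$, and one checks this map is a bijection from $\{\sigma \in S_n : \sigma_i \geq i-1\ \forall i\}$ onto the compositions of $n$ with $\ell(\alpha) = n - \#\{i : \sigma_i = i-1\}$ and with $(-1)^\sigma = (-1)^{n-\ell(\alpha)}$.

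An alternative and perhaps cleaner route for the main formula, which I would likely prefer in the write-up, is to avoid the permutation bookkeeping entirely and instead use the generating-function identity \eqref{Bz-factorization}--\eqref{prodBz-specialization} directly at $z_1 = \cdots = z_n$: the coefficient extraction in the proof of Theorem~\ref{thm:JT} shows $\fS_{1^n}$ is the coefficient of $z_1 z_2 \cdots z_n$ in $\OM_{z_1,\dots,z_n} \prod_{i<j}(1 - z_j/z_i)$, and expanding $\OM_{z_1,\dots,z_n} = \prod_{r}\OM_{z_r}$ together with the fact that only squarefree monomials in each $z_r$ survive immediately reorganizes the sum into $\sum_{\alpha}(-1)^{n-\ell(\alpha)}\HH_\alpha$. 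Either way, once the formula is in hand, the consequences follow from Corollary~\ref{lemma:ehperp}: applying $F_{1^r}^\perp$ to $\sum_{\alpha \models n} (-1)^{n-\ell(\alpha)}\HH_\alpha$ via \eqref{eq:eperponHbasis} and carefully tracking the sign change $(-1)^{\ell(\alpha)} \leftrightarrow (-1)^{\ell(\beta)}$ as parts drop from $\alpha_j$ to $\beta_j = \alpha_j - 1$ (a part becoming $0$ shortens the length by one, matching the change in $n-r-\ell$) collapses the double sum to $\sum_{\beta \models n-r}(-1)^{(n-r)-\ell(\beta)}\HH_\beta = \fS_{1^{n-r}}$.

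A slicker derivation of the $\perp$-statements, which I would use to shorten the argument, is to invoke the already-proved identity $\EEE^\perp_z \OM_Y = \OM_Y \prod_{y \in Y}(1+zy)$ from Section~\ref{sec:nsymqsymcalc} in tandem with the factorization $\BB(w) = \OM_w \EEE_{-1/w}^\perp$; since $\EEE^\perp_z = \sum_i z^i F_{1^i}^\perp$ is (up to reindexing) the same generating operator appearing in $\BB(w)$, and since the generating function for the $\fS_{1^n}$ is visibly $\prod$-multiplicative, the relation $F_{1^r}^\perp(\fS_{1^n}) = \fS_{1^{n-r}}$ is just the statement that $\EEE^\perp$ acts on the $\fS_{1^\bullet}$ generating series by multiplication by a linear factor, which one reads off directly; and $F_s^\perp(\fS_{1^n}) = 0$ for $s>1$ follows because $\fS_{1^n}$ lies in the subalgebra generated by the $\fS_1 = \HH_1$ on which $F_s^\perp$ acts trivially for $s>1$ by the same computation as in Lemma~\ref{lemma:Frule}.

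The main obstacle I anticipate is the sign bookkeeping in the first approach: verifying that the bijection between the surviving permutations and compositions of $n$ sends $(-1)^\sigma$ to $(-1)^{n-\ell(\alpha)}$ requires identifying the length of $\sigma$ as a product of commuting transpositions, and while this is routine it is error-prone. Using the generating-function route circumvents this, so I expect the cleanest proof to mirror the proof of Theorem~\ref{thm:JT} and then apply Corollary~\ref{lemma:ehperp} (or directly the $\EEE_z^\perp$ identity) to finish; the only remaining care is the convention that zero parts are deleted, which must be handled consistently so that lengths and signs match on both sides.
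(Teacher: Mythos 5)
Your derivation of the expansion $\fS_{1^n} = \sum_{\alpha \models n} (-1)^{n-\ell(\alpha)} \HH_\alpha$ is fine and is essentially what the paper does (it is read off from Theorem~\ref{thm:JT}; your bijection between the surviving permutations and compositions, or the coefficient extraction from $\OM_{z_1,\dots,z_n}\prod_{i<j}(1-z_j/z_i)$, both work). The problems are in how you handle the two $\perp$-statements. The argument you give for $F_s^\perp(\fS_{1^n})=0$, $s>1$, is based on a false claim: $\fS_{1^n}$ does \emph{not} lie in the subalgebra generated by $\HH_1$ (already $\fS_{11}=\HH_{11}-\HH_2$ is not a polynomial in $\HH_1$), and $F_s^\perp$ does not act trivially on that subalgebra anyway, since by \eqref{eq:hperponHbasis} one has $F_2^\perp(\HH_1\HH_1)=\HH_0=1\neq 0$. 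So as written, the $s>1$ vanishing has no proof. For $F_{1^r}^\perp(\fS_{1^n})=\fS_{1^{n-r}}$, your route via \eqref{eq:eperponHbasis} is correct in principle, but ``carefully tracking the sign change'' is not an argument: for a fixed $\HH_\gamma$ with $\gamma\models n-r$ the double sum contributes many terms of both signs, and one must actually verify a cancellation such as $\sum_{j+p=r}(-1)^{r-p}\binom{\ell(\gamma)}{j}\binom{\ell(\gamma)+p}{p}=1$ (true, but it needs to be done). Your ``slicker'' generating-function version is also too quick: extracting the coefficient of $z^r z_1\cdots z_n$ as in the proof of Proposition~\ref{prop:eperponfS} produces $\sum_S \fS_\beta$ over all $0/1$-tuples $\beta$ with $n-r$ ones, and to conclude you must know that every $\fS_\beta$ with a zero occurring before a one vanishes (e.g.\ $\fS_{01}=\BB_0(\HH_1)=0$); that vanishing is not supplied by Lemma~\ref{cor:zeroelts} and is nowhere established at this point, so it cannot simply be ``read off.''

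For comparison, the paper's proof avoids all of this by computing the coproduct of $\fS_{1^n}$: from the $\HH$-expansion one extracts the recursion $\fS_{1^n}=\sum_{i=1}^n(-1)^{i-1}\HH_i\,\fS_{1^{n-i}}$, which identifies $\fS_{1^n}$ with the elementary generator $E_n=(-1)^nS(\HH_n)$; since the antipode is a coalgebra morphism, $\Delta(\fS_{1^n})=\sum_{i=0}^n\fS_{1^{n-i}}\otimes\fS_{1^i}$, and then Lemma~\ref{lemma:Malphaperp} (together with $F_{1^r}=M_{1^r}$ and $F_s=\sum_{\alpha\models s}M_\alpha$) gives immediately $F_{1^r}^\perp(\fS_{1^n})=\fS_{1^{n-r}}$ and $F_s^\perp(\fS_{1^n})=\bigl(\sum_{\alpha\models s}(-1)^{s-\ell(\alpha)}\bigr)\fS_{1^{n-s}}=0$ for $s>1$. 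If you want to keep your more computational route, you must supply the binomial cancellation for the $F_{1^r}^\perp$ case and give a genuine argument (for instance a similar cancellation with \eqref{eq:hperponHbasis}) for the $s>1$ case, replacing the subalgebra claim entirely.
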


\begin{proof}
The expansion of $\fS_{1^n}$ in terms of the complete homogeneous basis
is a direct consequence of Theorem \ref{thm:JT}.  
For $n>1$, expand $\fS_{1^n}$ in the leftmost occurrence of $\HH$
and group the rest of the terms together as
\[
 \fS_{1^n}
 = \sum_{i=1}^{n} \sum_{\beta \models n-i} (-1)^{n-\ell(\beta)-1} \HH_{i} \HH_{\beta}
 = \sum_{i=1}^{n} (-1)^{i-1} \HH_{i} \fS_{1^{n-i}}~.
\]
This implies that $\fS_{1^n} = E_n = (-1)^n S( \HH_n )$ is the usual analogue of the
elementary generators of $\Nsym$.  Because the antipode is a coalgebra morphism,
\[ \Delta(\fS_{1^n}) = (-1)^n \Delta( S( \HH_n )) = (-1)^n 
\sum_{i=0}^n S(H_{n-i}) \otimes S(H_{i}) = \sum_{i=0}^n \fS_{1^{n-i}} \otimes \fS_{1^i}~. \]
By Lemma \ref{lemma:Malphaperp} we know that $F_{1^r}^\perp( \fS_{1^{n}} ) =
M_{1^r}^\perp( \fS_{1^n} ) = \fS_{1^{n-r}}$.  Also by Lemma \ref{lemma:Malphaperp},
for $s>1$,
\[F_{s}^\perp( \fS_{1^n}) = \sum_{\alpha \models s} M_\alpha^\perp( \fS_{1^n} )
= \sum_{\alpha \models s} (-1)^{s-\ell(\alpha)} \fS_{1^{n-s}} = 0~.\qedhere\]
\end{proof}

This allows us to give a right Pieri rule for $\fS_{1^r}$.

\begin{Proposition} \label{prop:ePieri}  For $\alpha$ a composition and $s\geq0$,
 \[\fS_\alpha \fS_{1^s} = \sum_{\substack{\beta \models |\alpha|+s\\\alpha_i\leq\beta_i\leq \alpha_i+1}} \fS_\beta\] where we use the convention that $\alpha_i = 0$ for $i>\ell(\alpha)$.
\end{Proposition}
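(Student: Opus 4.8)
The plan is to mimic the proof of Theorem \ref{thm:Pieri}, replacing Lemma \ref{lemma:Bexpansion} by an ``elementary'' analogue and then inducting on $\ell(\alpha)$.

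First I would establish the key $\BB$-identity: for $g \in \Nsym$, $m \in \ZZ$ and $s \ge 0$,
\[ \BB_m(g\, \fS_{1^s}) = \sum_{j \ge 0} (-1)^j \BB_{m+j}(g)\, \fS_{1^{s-j}} \]
(using the convention $\fS_{1^{t}} = 0$ for $t < 0$). This comes out of expanding $\BB_m(g\fS_{1^s}) = \sum_{i\ge0} (-1)^i \HH_{m+i} F_{1^i}^\perp(g\fS_{1^s})$, applying Lemma \ref{lemma:perpcomult} together with the coproduct $\Delta(F_{1^i}) = \sum_j F_{1^{i-j}} \otimes F_{1^j}$ and Corollary \ref{cor:equaltoEn} (which gives $F_{1^j}^\perp(\fS_{1^s}) = \fS_{1^{s-j}}$), and then reindexing $i = j + l$ to factor out $\BB_{m+j}(g)$. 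From this a one-line telescoping of signs yields the two-term recursion
\[ \BB_m(g)\, \fS_{1^s} = \BB_m(g\, \fS_{1^s}) + \BB_{m+1}(g\, \fS_{1^{s-1}}), \]
which is the exact counterpart of Lemma \ref{lemma:Bexpansion} with $\HH_s$ replaced by $\fS_{1^s}$.

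With this in hand I would prove the Proposition by induction on $\ell(\alpha)$. The base case $\ell(\alpha) = 0$ is immediate: $\fS_\emptyset \fS_{1^s} = \fS_{1^s}$, and the only composition $\beta \models s$ with $0 \le \beta_i \le 1$ for all $i$ is $[1^s]$. For the inductive step, write $\alpha = [m, \overline{\alpha}]$ with $m = \alpha_1$, so that $\fS_\alpha = \BB_m(\fS_{\overline{\alpha}})$ by Definition \ref{def:immaculate}. Applying the two-term recursion with $g = \fS_{\overline{\alpha}}$, then the inductive hypothesis to $\fS_{\overline{\alpha}}\fS_{1^s}$ and $\fS_{\overline{\alpha}}\fS_{1^{s-1}}$, and finally the linearity of $\BB_m$ together with $\BB_m(\fS_\eta) = \fS_{[m,\eta]}$, gives
\[ \fS_\alpha \fS_{1^s} = \sum_{\eta} \fS_{[m,\eta]} + \sum_{\eta'} \fS_{[m+1,\eta']}, \]
where $\eta$ runs over all $\eta \models |\overline{\alpha}| + s$ with $\overline{\alpha}_i \le \eta_i \le \overline{\alpha}_i + 1$, and $\eta'$ over all $\eta' \models |\overline{\alpha}| + s - 1$ with $\overline{\alpha}_i \le \eta'_i \le \overline{\alpha}_i + 1$ (the second sum being empty when $s = 0$). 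The remaining work is bookkeeping: any $\beta \models |\alpha| + s$ with $\alpha_i \le \beta_i \le \alpha_i + 1$ has $\beta_1 \in \{m, m+1\}$, and deleting the first part sends the case $\beta_1 = m$ bijectively onto the first sum and the case $\beta_1 = m+1$ bijectively onto the second; the two families are disjoint (their first parts differ) and each term appears once, multiplicity-freeness being inherited from the inductive hypothesis.

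I expect the only genuine subtlety — the ``main obstacle'' — to be getting the key $\BB$-identity exactly right. In contrast with the $\HH_s$ case, where $F_{1^i}^\perp(f\HH_s)$ contributed only two terms (Lemma \ref{lemma:Frule}), here $F_{1^i}^\perp(g\fS_{1^s})$ produces a full sum $\sum_{j} F_{1^{i-j}}^\perp(g)\fS_{1^{s-j}}$, and one must verify that, after recombining into $\BB_m(g\fS_{1^s}) + \BB_{m+1}(g\fS_{1^{s-1}})$, the signs telescope down to the single clean term $\BB_m(g)\fS_{1^s}$. Everything downstream is the same combinatorial argument as in Theorem \ref{thm:Pieri}, with the edge cases ($s = 0$ and $\overline{\alpha} = \emptyset$) covered by the conventions $\fS_{1^{-1}} = 0$ and $\fS_\emptyset = 1$.
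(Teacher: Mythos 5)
Your proposal is correct and follows essentially the same route as the paper: both rest on Lemma \ref{lemma:perpcomult} and Corollary \ref{cor:equaltoEn} to obtain the recursion $\BB_m(f)\fS_{1^s} = \BB_m(f\,\fS_{1^s}) + \BB_{m+1}(f\,\fS_{1^{s-1}})$ (the paper's Equation \eqref{eq:eonestep}), and then induct on $\ell(\alpha)$ exactly as you do (the paper leaves that induction to the reader). Your derivation of the recursion via the intermediate identity $\BB_m(g\,\fS_{1^s}) = \sum_{j\ge 0}(-1)^j\BB_{m+j}(g)\,\fS_{1^{s-j}}$ is a harmless reorganization of the paper's telescoping of the $F_{1^i}^\perp$ terms.
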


\begin{proof}
By combining results in Lemma \ref{lemma:perpcomult} and Corollary \ref{cor:equaltoEn}, we
know that
\begin{align*}
F_{1^i}^\perp( F \fS_{1^s} ) -  F_{1^{i-1}}^\perp( F \fS_{1^{s-1}} )
&= \sum_{j=0}^i F_{1^j}^\perp(F) F_{1^{i-j}}^\perp(\fS_{1^{s}}) - \sum_{j=0}^{i-1} F_{1^j}^\perp(F) F_{1^{i-j-1}}^\perp(\fS_{1^{s-1}})\\
&= F_{1^i}^\perp(F) \fS_{1^{s}}
\end{align*}
Hence, we calculate directly that
\begin{align}
\BB_m( F )\fS_{1^s} &=\sum_{i\geq 0} (-1)^i \HH_{m+i} F_{1^i}^\perp( F)\fS_{1^s}\nonumber\\
&=\sum_{i\geq 0} (-1)^i \HH_{m+i} F_{1^i}^\perp( F \fS_{1^s} ) - 
\sum_{i\geq 0} (-1)^i \HH_{m+i} F_{1^{i-1}}^\perp( F \fS_{1^{s-1}} )\nonumber\\
&=\sum_{i\geq 0} (-1)^i \HH_{m+i} F_{1^i}^\perp( F \fS_{1^s} ) - 
\sum_{i\geq 0} (-1)^{i+1} \HH_{m+i+1} F_{1^{i}}^\perp( F \fS_{1^{s-1}} )\nonumber\\
&=\BB_m( F \fS_{1^s})  + \BB_{m+1}( F \fS_{1^{s-1}})~.\label{eq:eonestep}
\end{align}
From this identity a proof of the right-Pieri rule follows by a straight-forward 
induction on the length of the composition $\alpha$ which we leave to the reader.
\end{proof}

\subsection{Pieri rules for skew operators}

The following development contains expressions that include integer tuples (as opposed to compositions).
To ensure that the sets of integer tuples that we consider are finite, we note that it is not necessary
to consider more than a finite set once the degree of the element is fixed.
\begin{Lemma}\label{cor:zeroelts}
For $\alpha \in \ZZ^m$, if $\alpha_i<i-m$ for some $1 \leq i \leq m$, then
$\fS_\alpha = 0$.
Also, if $\sum_{i=1}^m \alpha_i <0$, then $\fS_\alpha = 0$.
\end{Lemma}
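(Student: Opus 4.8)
The plan is to prove both claims by induction on $m$, using the Jacobi-Trudi expansion of Theorem~\ref{thm:JT}, or equivalently the recursive structure $\fS_\alpha = \BB_{\alpha_1}(\fS_{\overline\alpha})$ where $\overline\alpha = [\alpha_2,\dots,\alpha_m]$. For the second statement, suppose $\sum_{i=1}^m \alpha_i < 0$. Applying the forgetful map $\chi$ and Corollary~\ref{thm:projection}, we get $\chi(\fS_\alpha) = s_\alpha$, which is a homogeneous symmetric function of degree $\sum \alpha_i < 0$, hence $s_\alpha = 0$. But $\chi$ does not have trivial kernel in general, so this only shows $\chi(\fS_\alpha)=0$, not $\fS_\alpha=0$ — I need a genuinely non-commutative argument. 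Instead I would argue directly from Theorem~\ref{thm:JT}: every term $\HH_{\alpha_1+\sigma_1-1,\dots,\alpha_m+\sigma_m-m}$ appearing in the sum has total degree $\sum_i(\alpha_i + \sigma_i - i) = \sum_i \alpha_i + \sum_i \sigma_i - \sum_i i = \sum_i \alpha_i < 0$ (since $\sum \sigma_i = \sum i$ for any permutation). A composition of $\HH$'s of negative total degree must contain a negative index among its parts, and by the convention $\HH_{-r}=0$ for $r>0$ every such term vanishes. So $\fS_\alpha = 0$, and this handles the second claim cleanly.

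For the first claim, fix $i$ with $\alpha_i < i - m$, i.e.\ $\alpha_i + \sigma_i - i < \sigma_i - m \le 0$ is not quite automatic, so I need to be more careful: I want to show every Jacobi-Trudi term vanishes. Consider the first $i$ parts of the tuple in a given term: these are $\alpha_j + \sigma_j - j$ for $j = 1,\dots,i$. Their sum is $\sum_{j=1}^i \alpha_j + \sum_{j=1}^i \sigma_j - \sum_{j=1}^i j = \sum_{j=1}^i \alpha_j + \sum_{j=1}^i(\sigma_j - j)$. Here $\sum_{j=1}^i \sigma_j \ge 1 + 2 + \cdots + i$ would give a nonnegative contribution, which is the wrong direction. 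So a pure degree argument on the prefix does not immediately work either; the key obstacle is that individual parts of a Jacobi-Trudi term can be large positive even when one $\alpha_i$ is very negative, because the permutation can shuffle large values into early slots.

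The cleaner route, which I would actually pursue, is induction on $m$ via the Bernstein operator recursion together with the right-Pieri rule. For $m=1$: $\fS_{\alpha_1} = \HH_{\alpha_1}$, and $\alpha_1 < 1 - 1 = 0$ forces $\HH_{\alpha_1} = 0$. For the inductive step, I would use the identity dual to the Pieri rule: by Theorem~\ref{thm:JT} applied with the last variable singled out (expanding the determinant about the last column, or iterating the $\HH_s$-Pieri rule of Theorem~\ref{thm:Pieri} in reverse via Proposition~\ref{cor:Hexpansion}), one expresses $\fS_{[\alpha_1,\dots,\alpha_m]}$ in terms of $\fS_{[\beta_1,\dots,\beta_{m-1}]}\HH_t$ and lower terms where each $\beta_j \le \alpha_j$ for $j<i$ — actually the surgical tool here is the antisymmetry relation stated right after Definition~\ref{def:JTformula}'s analogue: swapping adjacent entries $\alpha_r,\alpha_{r+1}$ of the index of $\fS$ sends $\fS_{\dots,\alpha_r,\alpha_{r+1},\dots}$ to $-\fS_{\dots,\alpha_{r+1}-1,\alpha_r+1,\dots}$ (the non-commutative Jacobi-Trudi has the same row-swap behavior since it is an alternating sum over $S_m$). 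Using these transpositions I would move the offending entry $\alpha_i$ to the last position: after $m-i$ adjacent swaps, $\fS_\alpha = \pm \fS_{\alpha'}$ where the last entry of $\alpha'$ is $\alpha_i + (m-i) < (i-m) + (m-i) = 0$. Then writing $\fS_{\alpha'} = \BB_{\alpha'_1}\cdots\BB_{\alpha'_{m-1}}\BB_{\alpha'_m}(1)$ and noting $\BB_{\alpha'_m}(1) = \HH_{\alpha'_m}$ with $\alpha'_m < 0$... but $\HH_{\alpha'_m} = 0$ only when $\alpha'_m$ is a strictly negative integer, which it is. Hence $\fS_{\alpha'} = 0$, so $\fS_\alpha = 0$. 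The only thing to verify carefully is that the row-swap relation indeed holds for $\fS$ (not just for the commutative $s_\alpha$); this follows because Theorem~\ref{thm:JT} writes $\fS_\alpha$ as $\sum_{\sigma}(-1)^\sigma \HH_{\alpha+\sigma-\mathrm{id}}$, and composing $\sigma$ with the transposition $(r,r+1)$ is a sign-reversing bijection realizing exactly the claimed identity — this is a purely formal manipulation of the index set $S_m$, identical to the commutative case. I expect this verification of the row-swap relation in $\Nsym$ to be the main (though routine) obstacle; once it is in hand, both parts of the lemma follow immediately as above.
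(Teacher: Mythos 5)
Your handling of the second claim is correct (each Jacobi--Trudi term has total degree $\sum_i\alpha_i<0$, so some index is negative and the term dies), and it is essentially equivalent to the paper's argument, which simply invokes that $\fS_\alpha$ is homogeneous of negative degree. The problem is the first claim, where you dismissed the direct estimate as ``not quite automatic'' --- but it \emph{is} automatic, and it is exactly the paper's proof: for any $\sigma\in S_m$ one has $\sigma_i\le m$, so the hypothesis $\alpha_i<i-m$ gives $\alpha_i+\sigma_i-i<\sigma_i-m\le 0$ for \emph{every} $\sigma$. Hence in every term of Theorem~\ref{thm:JT} the $i$-th subscript of $\HH$ is strictly negative, each term vanishes, and the lemma follows in one line. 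No prefix-sum estimate and no relocation of the offending entry is needed; the point is that the $i$-th slot of the term only ever receives $\alpha_i+\sigma_i-i$, never a ``large value shuffled in from elsewhere.''

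The fallback route you actually propose does not work, because the row-swap identity $\fS_{\dots,\alpha_r,\alpha_{r+1},\dots}=-\fS_{\dots,\alpha_{r+1}-1,\alpha_r+1,\dots}$ is \emph{false} in $\Nsym$. The sign-reversing bijection $\sigma\mapsto\sigma\circ(r,r+1)$ does match signs, but it also exchanges which of the two values $\alpha_r+\sigma_{r+1}-r$ and $\alpha_{r+1}+\sigma_r-(r+1)$ occupies position $r$ versus position $r+1$ in the subscript of $\HH$; in the commutative case this is harmless, but here the factors do not commute, so the matched terms are \emph{different} elements of the $H$-basis and do not cancel or coincide. Concretely, $\fS_{13}=\HH_{13}-\HH_{22}$ while $-\fS_{22}=\HH_{31}-\HH_{22}$, and these differ since $\HH_{13}\neq\HH_{31}$. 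This is precisely why the paper's Remark after Theorem~\ref{thm:JT} insists on expanding the non-commutative ``determinant'' about the first row: the usual row symmetries of the commutative Jacobi--Trudi determinant are not available in $\Nsym$, so any argument importing them needs independent justification --- and here none exists. Replace that step by the direct estimate above and the proof is complete.
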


\begin{proof}
    By Theorem~\ref{thm:JT},
    $\fS_\alpha  = \sum_{\sigma \in S_m} (-1)^\sigma \HH_{\alpha_1+\sigma_1 -1, \alpha_2 + \sigma_2 -2, \dots, \alpha_m + \sigma_m - m}$.
    Thus, if $\alpha_i < i - m$ for some $i$, then $\alpha_i + \sigma_i
    - i < 0$ for all $\sigma \in S_m$, and hence $H_{\alpha_i + \sigma_i - i}
    = 0$.

    Remark that $\fS_\alpha$ is an element of homogeneous degree $\sum_{i=1}^m
    \alpha_i$, so that if $\sum_{i=1}^m \alpha_i < 0$, then $\fS_\alpha = 0$.
\end{proof}

From the expressions that we have thus far developed, we
are able to describe the action of $F_{1^r}^\perp$ and $F_r^\perp$ 
on the immaculate basis.

\begin{Proposition}\label{prop:eperponfS}
For $r \geq 0$, and for $\alpha \in \ZZ^m$,
\begin{equation}\label{eq:eperponfS}
 F_{1^r}^\perp \fS_{\alpha} = \sum_{\substack{\beta \in \ZZ^{m}\\
\alpha_i - \beta_i \in \{0,1\}\\|\beta|=|\alpha|-r}}
\fS_{\beta}~.
\end{equation}
If $r > \ell(\alpha)$, then $F_{1^r}^\perp \fS_{\alpha}=0$.
\end{Proposition}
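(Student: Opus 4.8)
The plan is to prove the expansion \eqref{eq:eperponfS} by induction on $m = \ell(\alpha)$, using the creation-operator definition $\fS_\alpha = \BB_{\alpha_1}(\fS_{\overline\alpha})$ (where $\overline\alpha = [\alpha_2,\dots,\alpha_m]$) together with a commutation relation between $F_{1^r}^\perp$ and a single Bernstein operator $\BB_m$. The base case $m=1$ is exactly the content of Lemma~\ref{lemma:Frule}: $F_{1^r}^\perp(\HH_{\alpha_1}) = \HH_{\alpha_1}$ for $r=0$, $=\HH_{\alpha_1-1}$ for $r=1$, and $=0$ for $r>1$, which matches the right-hand side of \eqref{eq:eperponfS} (the only $\beta\in\ZZ^1$ with $\alpha_1-\beta_1\in\{0,1\}$ and $|\beta|=\alpha_1-r$).

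For the inductive step, first I would establish the key identity
\begin{equation*}
F_{1^r}^\perp\bigl(\BB_m(f)\bigr) = \BB_m\bigl(F_{1^r}^\perp(f)\bigr) + \BB_{m-1}\bigl(F_{1^{r-1}}^\perp(f)\bigr),
\end{equation*}
valid for $f\in\Nsym$. This should come out of expanding $\BB_m(f)=\sum_{i\ge0}(-1)^i\HH_{m+i}F_{1^i}^\perp(f)$, applying $F_{1^r}^\perp$ to the product $\HH_{m+i}\cdot F_{1^i}^\perp(f)$ using Lemma~\ref{lemma:perpcomult} together with the coproduct rule $\Delta(F_{1^r})=\sum_{k=0}^r F_{1^{r-k}}\otimes F_{1^k}$ and the fact (from Corollary~\ref{lemma:ehperp}, or rather Lemma~\ref{lemma:Frule}) that $F_{1^k}^\perp(\HH_{m+i})$ is $\HH_{m+i}$ for $k=0$, $\HH_{m+i-1}$ for $k=1$, and $0$ otherwise; then the operators $F_{1^j}^\perp$ compose as $F_{1^j}^\perp F_{1^i}^\perp = F_{1^{i+j}}^\perp$, and reindexing collapses the double sum into the two claimed Bernstein terms. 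Applying this with $f=\fS_{\overline\alpha}$ and invoking the induction hypothesis on $\overline\alpha$ for both $F_{1^r}^\perp$ and $F_{1^{r-1}}^\perp$ gives
\begin{equation*}
F_{1^r}^\perp\fS_\alpha = \sum_{\substack{\gamma\in\ZZ^{m-1}\\ \overline\alpha_i-\gamma_i\in\{0,1\}\\ |\gamma|=|\overline\alpha|-r}} \BB_{\alpha_1}(\fS_\gamma) + \sum_{\substack{\gamma\in\ZZ^{m-1}\\ \overline\alpha_i-\gamma_i\in\{0,1\}\\ |\gamma|=|\overline\alpha|-r+1}} \BB_{\alpha_1-1}(\fS_\gamma).
\end{equation*}
The first sum produces the $\beta$ with $\beta_1=\alpha_1$, and the second the $\beta$ with $\beta_1=\alpha_1-1$; since $\BB_{\alpha_1}(\fS_\gamma)=\fS_{[\alpha_1,\gamma]}$ and $\BB_{\alpha_1-1}(\fS_\gamma)=\fS_{[\alpha_1-1,\gamma]}$, combining the two sums yields exactly \eqref{eq:eperponfS}. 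The final claim, that $F_{1^r}^\perp\fS_\alpha=0$ when $r>m$, then follows: every $\beta$ in the sum satisfies $|\beta|=|\alpha|-r$ and $\beta_i\ge\alpha_i-1\ge -1$ componentwise, forcing $|\beta|\ge -m > -r$, i.e.\ $|\beta|+r>0$, contradicting $|\beta|=|\alpha|-r$ unless $|\alpha|>0$... more cleanly, one shows by the same induction that $r>m$ leaves no surviving terms, since at each step one either decrements $m$ by $1$ while decrementing $r$ by $1$ (the $\BB_{m-1}$ branch, which keeps $r-1>m-1$) or keeps $m$ fixed... I would instead just note that $F_{1^r}^\perp\fS_\alpha$ has degree $|\alpha|-r$, and appeal to Lemma~\ref{cor:zeroelts}: each $\beta$ with $\beta_i\in\{\alpha_i,\alpha_i-1\}$ has a part $\beta_i\ge\alpha_i-1$, but once $r>m$ the degree constraint $|\beta|=|\alpha|-r$ together with $\sum(\alpha_i-\beta_i)=r>m=\ell(\alpha)$ is impossible since each summand $\alpha_i-\beta_i\le 1$. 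That last parenthetical is the cleanest route and I would use it.

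The main obstacle I anticipate is getting the commutation identity for $F_{1^r}^\perp\circ\BB_m$ correct — in particular tracking the signs $(-1)^i$ through the coproduct splitting and verifying that the reindexing genuinely telescopes to precisely two Bernstein terms rather than a longer tail. Everything downstream (matching index sets, the degeneracy claim) is bookkeeping once that identity is in hand.
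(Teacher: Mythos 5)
Your route is genuinely different from the paper's. The paper proves this proposition in one stroke with generating functions: it applies $\EEE_z^\perp$ to $\BB(z_1)\cdots\BB(z_m)\,1 = \OM_{z_1,\ldots,z_m}\prod_{i<j}(1-z_j/z_i)$, which produces the extra factor $\prod_{i=1}^m(1+zz_i)$, and then extracts the coefficient of $z^r z_1^{\alpha_1}\cdots z_m^{\alpha_m}$, obtaining the sum over all $r$-subsets of rows at once. You instead peel off one row at a time by induction on $m$, through the commutation relation $F_{1^r}^\perp\bigl(\BB_m(f)\bigr) = \BB_m\bigl(F_{1^r}^\perp(f)\bigr) + \BB_{m-1}\bigl(F_{1^{r-1}}^\perp(f)\bigr)$ (reading $F_{1^{-1}}^\perp=0$). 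That relation is true, the two branches $\beta_1=\alpha_1$ and $\beta_1=\alpha_1-1$ do combine to the stated index set, and your closing observation that $\alpha_i-\beta_i\le 1$ in each of the $m$ rows forces $r=\sum_i(\alpha_i-\beta_i)\le m$ is exactly the right way to get the vanishing for $r>\ell(\alpha)$ (discard the earlier meandering attempt). Your inductive style is closer in spirit to the paper's proofs of Lemma \ref{lemma:Bexpansion} and Proposition \ref{prop:ePieri}, and it buys a proof that does not need the machinery of $\OM_Y$ and alphabets; the paper's generating-function method buys a uniform computation that also yields Proposition \ref{prop:hperponfS} with no extra work.

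There is, however, one false step in your justification of the key identity: the composition rule $F_{1^j}^\perp F_{1^i}^\perp = F_{1^{i+j}}^\perp$ does not hold. Inside $\Qsym$, $F_{1^i}$ is the elementary symmetric function $e_i$, and $e_j e_i \neq e_{i+j}$; concretely, $F_{1}^\perp F_{1}^\perp(\HH_2) = 1$ while $F_{1^2}^\perp(\HH_2) = 0$. Fortunately you never need such a rule. Once Lemma \ref{lemma:perpcomult} gives $F_{1^r}^\perp(\HH_{m+i}\,g) = \HH_{m+i}\,F_{1^r}^\perp(g) + \HH_{m+i-1}\,F_{1^{r-1}}^\perp(g)$ with $g = F_{1^i}^\perp(f)$, the two resulting sums over $i$ are recognized as $\BB_m(F_{1^r}^\perp f)$ and $\BB_{m-1}(F_{1^{r-1}}^\perp f)$ using only the fact that $F_{1^r}^\perp$ and $F_{1^i}^\perp$ commute, which holds because $\Qsym$ is commutative: $F^\perp G^\perp = (GF)^\perp = (FG)^\perp = G^\perp F^\perp$. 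With that repair, your induction is complete.
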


\begin{proof}
Using the identities developed in Section \ref{sec:nsymqsymcalc}, we calculate
\begin{align*}
\EEE^\perp_z \BB(z_1) \BB(z_2) \cdots \BB(z_m)\, 1 
&= \EEE^\perp_z \OM_{z_1,z_2,\ldots,z_m} \prod_{1\leq i<j\leq m} (1-z_j/z_i)\\
&= \OM_{z_1,z_2,\ldots,z_m} \prod_{1\leq i<j\leq m} (1-z_j/z_i)\prod_{i=1}^m (1 + z z_i)\\
&=\BB(z_1) \BB(z_2) \cdots \BB(z_m)\, 1 \prod_{i=1}^m (1 + z z_i)\\
&=\BB(z_1) \BB(z_2) \cdots \BB(z_m)\, 1 \sum_{S \subseteq \{ 1,2,\ldots,m \}} 
z^{|S|} \prod_{i \in S} z_i~.
\end{align*}
Now by taking the coefficient of $z^r z_1^{\alpha_1} z_2^{\alpha_2} \cdots z_m^{\alpha_m}$
in both sides of the equation, on the left we have $F_{1^r}^\perp \fS_{\alpha}$, and
on the right we have $\sum_{\substack{S \subseteq \{ 1,2,\ldots, m\}\\ |S| = r}}
\fS_{\alpha_1 - \delta_{1 \in S}, \alpha_2 - \delta_{2 \in S}, \ldots,
\alpha_m - \delta_{m \in S}}$ (where $\delta_{\true} = 1$ and $\delta_{\false}=0$),
which is equivalent to the right hand side of
Equation \eqref{eq:eperponfS}.
\end{proof}

\begin{Proposition}\label{prop:hperponfS}
For $r\geq0$ and for $\alpha \in \ZZ^m$,
\begin{equation}
F_r^\perp \fS_\alpha = \sum_{\substack{\beta \in \ZZ^{m}\\
i-m\leq\beta_i\leq\alpha_i\\|\beta|=|\alpha|-r}}
\fS_\beta~.
\end{equation}
\end{Proposition}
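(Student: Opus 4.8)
The plan is to mimic the proof of Proposition \ref{prop:eperponfS}, replacing the operator $\EEE_z^\perp$ with $\HHH_z^\perp$. First I would recall from Section \ref{sec:nsymqsymcalc} that $\HHH_z^\perp \OM_Y = \OM_Y / \prod_{y\in Y}(1-zy)$, and combine this with the identity \eqref{prodBz-specialization}, namely $\BB(z_1)\cdots\BB(z_m)\,1 = \OM_{z_1,\ldots,z_m}\prod_{1\le i<j\le m}(1-z_j/z_i)$. Since $\HHH_z^\perp$ commutes past the monomial quasi-symmetric coefficients and acts only on the $\OM$ factor, applying it to both sides yields
\[
\HHH_z^\perp \BB(z_1)\cdots\BB(z_m)\,1 = \BB(z_1)\cdots\BB(z_m)\,1 \cdot \prod_{i=1}^m \frac{1}{1-zz_i}.
\]

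Next I would expand the geometric series $\prod_{i=1}^m (1-zz_i)^{-1} = \sum_{c_1,\ldots,c_m\ge 0} z^{c_1+\cdots+c_m} z_1^{c_1}\cdots z_m^{c_m}$ and extract the coefficient of $z^r z_1^{\alpha_1}\cdots z_m^{\alpha_m}$ on both sides. On the left this coefficient is $F_r^\perp \fS_\alpha$ (using $\HHH_z^\perp = \sum_i z^i F_i^\perp$ and the expansion of $\BB(z_1)\cdots\BB(z_m)\,1$ whose $z_1^{\beta_1}\cdots z_m^{\beta_m}$-coefficient is $\fS_\beta$). On the right it is $\sum_{c} \fS_{\alpha_1-c_1,\ldots,\alpha_m-c_m}$, the sum over tuples $c\in\NN^m$ with $|c|=r$; writing $\beta_i = \alpha_i - c_i$ this becomes $\sum \fS_\beta$ over $\beta\in\ZZ^m$ with $\beta_i\le\alpha_i$ and $|\beta|=|\alpha|-r$. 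Finally I would invoke Lemma \ref{cor:zeroelts}: any $\beta$ with $\beta_i < i-m$ contributes $\fS_\beta = 0$, so those terms may be dropped, producing exactly the stated index set $i-m\le\beta_i\le\alpha_i$, $|\beta|=|\alpha|-r$.

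I do not anticipate a serious obstacle here; the main point requiring a little care is the bookkeeping when converting the substitution $\beta_i = \alpha_i - c_i$ (with $c_i \ge 0$) into the inequality $\beta_i \le \alpha_i$, and then recognizing that adding the redundant lower bound $\beta_i \ge i-m$ changes nothing because of Lemma \ref{cor:zeroelts}. One should also note briefly that only finitely many $\beta$ give nonzero $\fS_\beta$ in a fixed degree, so all the formal power series manipulations are legitimate coefficient extractions. This is essentially the same justification already used implicitly in the proof of Proposition \ref{prop:eperponfS}.
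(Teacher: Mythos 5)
Your proposal is correct and follows the paper's own proof essentially verbatim: apply $\HHH_z^\perp$ to the generating-function identity \eqref{prodBz-specialization}, expand $\prod_i (1-zz_i)^{-1}$, extract the coefficient of $z^r z_1^{\alpha_1}\cdots z_m^{\alpha_m}$, and then use Lemma \ref{cor:zeroelts} to impose the lower bound $\beta_i \geq i-m$. No gaps; the bookkeeping points you flag are exactly the ones the paper handles implicitly.
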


\begin{proof} Again, using identities from Section \ref{sec:nsymqsymcalc},
\begin{align*}
\HHH^\perp_z \BB(z_1) \BB(z_2) \cdots \BB(z_m)\, 1 
&= \HHH^\perp_z \OM_{z_1,z_2,\ldots,z_m} \prod_{1\leq i<j\leq m} (1-z_j/z_i)\\
&= \BB(z_1) \BB(z_2) \cdots \BB(z_m)\, 1 \prod_{i=1}^m 1/(1 - z z_i)\\
&=  \sum_{r \geq 0} z^r \sum_{\substack{\gamma \in \NN^m\\
|\gamma|=r}} z_1^{\gamma_1} z_2^{\gamma_2} \cdots z_m^{\gamma_m}
\BB(z_1) \BB(z_2) \cdots \BB(z_m)\, 1
\end{align*}
If we take the coefficient of $z^r z_1^{\alpha_1} z_2^{\alpha_2} \cdots z_m^{\alpha_m}$
and let $\beta_i = \alpha_i-\gamma_i$, then we have that
$$F_r^\perp \fS_\alpha = \sum_{\substack{\beta \in \ZZ^{m}\\
\beta_i\leq\alpha_i\\|\beta|=|\alpha|-r}}
\fS_\beta~.$$
By Lemma \ref{cor:zeroelts} we may restrict our attention to the $\beta \in \ZZ^m$ such
that $\beta_i \geq i-m$.
\end{proof}

\subsection{The dual immaculate basis}

Every basis $X_\alpha$ of $\Nsym_n$ gives rise to a basis $Y_\beta$ of $\Qsym_n$ defined by duality; 
$Y_\beta$ is the unique basis satisfying $  \langle X_\alpha, Y_\beta \rangle = \delta_{\alpha,\beta}$. 
The dual basis to the immaculate basis of $\Nsym$, denoted $\fS_\alpha^*$, have positive expansions in 
the monomial and fundamental bases of $\Qsym$. 
Furthermore, Corollary \ref{thm:projection} allows us to give
an expansion of the usual Schur functions of $\sym$ (Theorem \ref{thm:decompose}) in terms of these elements. 
All of these results are dual statements to the statements earlier in this section and follow since for a quasi-symmetric
function $G \in \Qsym$,
\[ G = \sum_{\beta} \langle H_\beta, G \rangle M_\beta = \sum_\beta \langle R_\beta, G \rangle F_\beta~.\]

\begin{Proposition}\label{prop:MonomialPositive}
The dual immaculate functions $\fS_\alpha^*$ are monomial positive. Specifically they expand as 
\[ \fS_\alpha^* = \sum_{\beta\leq_\ell\alpha} K_{\alpha, \beta} M_\beta.\]
\end{Proposition}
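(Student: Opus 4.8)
The plan is to prove this by duality from the already-established expansion of the homogeneous basis in the immaculate basis (Proposition \ref{cor:Hexpansion}). Recall the general principle stated just above the proposition: for any $G \in \Qsym$ we have $G = \sum_\beta \langle \HH_\beta, G\rangle M_\beta$. Applying this to $G = \fS_\alpha^*$, we get $\fS_\alpha^* = \sum_\beta \langle \HH_\beta, \fS_\alpha^*\rangle M_\beta$, so the whole problem reduces to computing the pairing $\langle \HH_\beta, \fS_\alpha^*\rangle$.

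The key step is to evaluate $\langle \HH_\beta, \fS_\alpha^*\rangle$ using Proposition \ref{cor:Hexpansion}, which states $\HH_\beta = \sum_{\gamma \geq_\ell \beta} K_{\gamma,\beta}\,\fS_\gamma$. Since $\langle \fS_\gamma, \fS_\alpha^*\rangle = \delta_{\gamma,\alpha}$ by the definition of the dual basis, bilinearity of the pairing gives
\[
\langle \HH_\beta, \fS_\alpha^*\rangle = \sum_{\gamma \geq_\ell \beta} K_{\gamma,\beta}\,\langle \fS_\gamma, \fS_\alpha^*\rangle = K_{\alpha,\beta},
\]
where this term is present precisely when $\alpha \geq_\ell \beta$ (and otherwise $K_{\alpha,\beta}=0$, so the formula is consistent either way). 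Substituting back yields $\fS_\alpha^* = \sum_\beta K_{\alpha,\beta}\,M_\beta = \sum_{\beta \leq_\ell \alpha} K_{\alpha,\beta}\,M_\beta$, which is exactly the claimed identity; positivity is immediate since $K_{\alpha,\beta}$ counts immaculate tableaux and hence is a non-negative integer.

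Honestly, there is no real obstacle here — this is a routine duality transpose of Proposition \ref{cor:Hexpansion}, and the only thing to be careful about is the direction of the order relation: Proposition \ref{cor:Hexpansion} restricts to $\alpha \geq_\ell \beta$ in the sum $\HH_\beta = \sum K_{\alpha,\beta}\fS_\alpha$, and upon transposing, the sum for $\fS_\alpha^*$ naturally runs over $\beta \leq_\ell \alpha$. One could alternatively phrase this via the general fact that if a matrix $A = (a_{\alpha\beta})$ expresses one basis in terms of another, then the transpose $A^{T}$ expresses the dual bases in the opposite direction; here the ``Kostka-like'' matrix $(K_{\alpha,\beta})$ is upper-triangular with respect to $\geq_\ell$ with ones on the diagonal (Proposition \ref{prop:kostkamatrix}), so everything is consistent and the dual expansion inherits both the positivity and the (transposed) triangularity.
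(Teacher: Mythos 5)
Your proposal is correct and follows essentially the same route as the paper: both compute the coefficient of $M_\beta$ in $\fS_\alpha^*$ as the pairing $\langle \HH_\beta, \fS_\alpha^*\rangle$, expand $\HH_\beta$ via Proposition \ref{cor:Hexpansion}, and extract $K_{\alpha,\beta}$ by duality of the bases. The extra remarks on triangularity and the direction of the order are fine but not needed beyond what the paper already records in Proposition \ref{prop:kostkamatrix}.
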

\begin{proof}
This statement will follow from Proposition \ref{cor:Hexpansion} and duality. Specifically, the coefficient of $M_\beta$ in $\fS_\alpha^*$ is equal to \[\left\langle \HH_\beta ,\fS_\alpha^* \right\rangle = \left\langle \sum_\gamma K_{\gamma, \beta} \fS_\gamma, \fS_\alpha^* \right\rangle = K_{\alpha, \beta}\qedhere\]
\end{proof}

\begin{Proposition}\label{prop:FundamentalPositive}
The dual immaculate functions $\fS_\alpha^*$ are fundamental positive. Specifically they expand as 
\[ \fS_\alpha^* = \sum_{\beta\leq_\ell\alpha} L_{\alpha, \beta} F_\beta.\]
\end{Proposition}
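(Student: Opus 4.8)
The plan is to mirror exactly the argument used for Proposition \ref{prop:MonomialPositive}, replacing the role of the monomial--homogeneous duality with the fundamental--ribbon duality. Since $\{R_\beta\}$ and $\{F_\beta\}$ are dual bases, the coefficient of $F_\beta$ in $\fS_\alpha^*$ is the pairing $\langle R_\beta, \fS_\alpha^* \rangle$. So first I would write $R_\beta$ in the immaculate basis using Theorem \ref{thm:Rpositive}, namely $R_\beta = \sum_{\gamma \geq_\ell \beta} L_{\gamma,\beta}\, \fS_\gamma$, and then compute
\[
\langle R_\beta, \fS_\alpha^* \rangle
= \Big\langle \sum_{\gamma} L_{\gamma,\beta}\, \fS_\gamma,\ \fS_\alpha^* \Big\rangle
= \sum_{\gamma} L_{\gamma,\beta}\, \langle \fS_\gamma, \fS_\alpha^* \rangle
= L_{\alpha,\beta},
\]
using $\langle \fS_\gamma, \fS_\alpha^*\rangle = \delta_{\gamma,\alpha}$. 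This gives the coefficient of $F_\beta$ in $\fS_\alpha^*$ directly.

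Second, I would invoke the identity $G = \sum_\beta \langle R_\beta, G\rangle F_\beta$ recorded just before Proposition \ref{prop:MonomialPositive} (valid for any $G \in \Qsym$), applied to $G = \fS_\alpha^*$, to conclude $\fS_\alpha^* = \sum_\beta L_{\alpha,\beta}\, F_\beta$. Third, to pin down the index set, I would note that $L_{\alpha,\beta}$ counts standard immaculate tableaux of shape $\alpha$ and descent composition $\beta$; by the constraint coming from Lemma \ref{lemma:bijection} (which gives $K_{\alpha,\gamma} = \sum_{\beta \geq \gamma} L_{\alpha,\beta}$, forcing $L_{\alpha,\beta} \leq K_{\alpha,\beta}$) together with Proposition \ref{prop:kostkamatrix}(2), the term vanishes unless $\alpha \geq_\ell \beta$, i.e.\ $\beta \leq_\ell \alpha$. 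This justifies restricting the sum to $\beta \leq_\ell \alpha$ as stated.

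There is essentially no obstacle here: the proposition is the exact dual of Theorem \ref{thm:Rpositive} in the same way that Proposition \ref{prop:MonomialPositive} is the dual of Proposition \ref{cor:Hexpansion}, and all the needed ingredients (the expansion of $R_\beta$, the dual-basis pairing, the reconstruction formula $G = \sum_\beta \langle R_\beta, G\rangle F_\beta$, and the triangularity bound on $L_{\alpha,\beta}$) have already been established. The only point requiring a sentence of care is the index-set restriction, which I would handle by citing Proposition \ref{prop:kostkamatrix} and Lemma \ref{lemma:bijection} rather than re-deriving it. I would keep the write-up to two or three lines of displayed computation, parallel in form to the proof of Proposition \ref{prop:MonomialPositive}.
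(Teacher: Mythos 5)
Your proposal is correct and follows essentially the same route as the paper: the paper's proof also extracts $\langle R_\beta, \fS_\alpha^\ast\rangle = L_{\alpha,\beta}$ from Theorem \ref{thm:Rpositive} together with the expansion $G = \sum_\beta \langle R_\beta, G\rangle F_\beta$, and restricts the index set via Lemma \ref{lemma:bijection} (with the bound $L_{\alpha,\beta}\leq K_{\alpha,\beta}$ and Proposition \ref{prop:kostkamatrix}). Your write-up just spells out the pairing computation slightly more explicitly than the paper does.
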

\begin{proof}
From Theorem \ref{thm:Rpositive}, we have
$\left\langle R_\beta, \fS_\alpha^\ast \right\rangle = L_{\alpha\beta}~.$
By Lemma \ref{lemma:bijection}, $L_{\alpha\beta} = 0$ unless $\alpha \geq_\ell\beta$.
\end{proof}

Duality will also yield an explicit expansion of Schur functions into the dual immaculate basis.

\begin{Theorem}\label{thm:decompose}
The Schur function $s_\lambda$, with $\ell(\lambda) = k$ expands into the dual immaculate basis as follows:
\[ s_\lambda = \sum_{\sigma \in S_k} (-1)^\sigma 
\fS_{\lambda_{\sigma_1}+1-\sigma_1, \lambda_{\sigma_2}+2-\sigma_2, \cdots,\lambda_{\sigma_k}+k-\sigma_k}^*\]
where the sum is over permutations $\sigma$ such that $\lambda_{\sigma_i}+i-\sigma_i>0$
for all $i \in \{1,2,\ldots,k\}$.
\end{Theorem}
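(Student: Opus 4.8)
The plan is to deduce this from the Jacobi--Trudi rule for $\Nsym$ (Theorem~\ref{thm:JT}), the commutative Jacobi--Trudi identity of Definition~\ref{def:JTformula}, and the duality between the $\HH$-basis and the monomial basis. Concretely, I would compute the pairing $\langle \fS_\gamma, s_\lambda \rangle$ for each composition $\gamma$ and show it equals the multiplicity of $\fS_\gamma^*$ asserted by the formula; since $\{\fS_\gamma\}$ and $\{\fS_\gamma^*\}$ are dual bases, writing $s_\lambda = \sum_\gamma \langle \fS_\gamma, s_\lambda\rangle \fS_\gamma^*$ finishes the proof. The key point is that the pairing $\langle\,\cdot\,,\,\cdot\,\rangle$ between $\Nsym$ and $\Qsym$, when restricted to $\Nsym$ paired against $\sym \subseteq \Qsym$, is compatible with the Hall inner product on $\sym$ via the forgetful map: more precisely, for $f \in \Nsym$ and $g \in \sym$ we have $\langle f, g\rangle = \langle \chi(f), g\rangle_{\sym}$, because $\langle \HH_\alpha, m_\beta\rangle = \delta_{\alpha\beta}$ refines $\langle h_\lambda, m_\mu\rangle_{\sym} = \delta_{\lambda\mu}$ and $\chi(\HH_\alpha) = h_{\alpha_1}\cdots h_{\alpha_{\ell(\alpha)}}$.

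First I would record this compatibility explicitly, so that $\langle \fS_\gamma, s_\lambda\rangle = \langle \chi(\fS_\gamma), s_\lambda\rangle_{\sym} = \langle s_\gamma, s_\lambda\rangle_{\sym}$ by Corollary~\ref{thm:projection}. Next I would invoke Proposition~\ref{prop:schurcomposition}: for a composition $\gamma$ of length $k$, the symmetric function $s_\gamma$ is either $0$ (when $\gamma_i - i = \gamma_j - j$ for some $i \neq j$) or equals $(-1)^\tau s_\mu$ for the unique permutation $\tau$ straightening $\gamma$ to a partition $\mu$. Hence $\langle s_\gamma, s_\lambda\rangle_{\sym}$ is nonzero precisely when the straightening of $\gamma$ is $\lambda$, in which case it is $\pm 1$. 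So the expansion $s_\lambda = \sum_\gamma \langle s_\gamma, s_\lambda\rangle_{\sym}\, \fS_\gamma^*$ is supported exactly on those $\gamma$ whose straightening is $\lambda$.

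Then I would reparametrize that support. A composition $\gamma$ of length $k$ straightens to $\lambda$ exactly when $\gamma = [\lambda_{\sigma_1}+1-\sigma_1,\, \lambda_{\sigma_2}+2-\sigma_2,\, \ldots,\, \lambda_{\sigma_k}+k-\sigma_k]$ for some $\sigma \in S_k$ with all entries positive (the positivity is what makes $\gamma$ an honest composition rather than a tuple with a nonpositive part; the case of two equal shifted values is automatically excluded when the result is a genuine partition $\lambda$ with $\ell(\lambda)=k$, since a repeated shifted value would force a zero in the sorted tuple). For such $\gamma$, the straightening permutation is $\sigma^{-1}$, so $\langle s_\gamma, s_\lambda\rangle_{\sym} = (-1)^{\sigma^{-1}} = (-1)^\sigma$, matching the claimed sign. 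Substituting gives exactly the stated formula.

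The main obstacle is the bookkeeping in this last step: one must check that the map $\sigma \mapsto \gamma$ is a bijection from $\{\sigma \in S_k : \lambda_{\sigma_i}+i-\sigma_i > 0 \text{ for all } i\}$ onto the set of compositions of $|\lambda|$ of length $k$ that straighten to $\lambda$, and that the sign attached by Proposition~\ref{prop:schurcomposition} to $\gamma$ agrees with $(-1)^\sigma$. This is the dual mirror of the computation already done in the proof of Theorem~\ref{thm:JT} (Vandermonde expansion), so I would expect it to go through cleanly, but it is the one place where care with the direction of permutations and the content of Proposition~\ref{prop:schurcomposition} is essential. One subtlety worth addressing: one should confirm that distinct $\sigma$ in the indexing set yield distinct compositions $\gamma$, which follows because the partition $\lambda$ has $\ell(\lambda)=k$ and hence the shifted values $\lambda_j + (k-j)$ are all distinct, so recovering $\sigma$ from $\gamma$ is unambiguous.
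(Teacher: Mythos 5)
Your proposal is correct and follows essentially the same route as the paper's proof: extract the coefficient of $\fS_\gamma^*$ as the pairing $\langle \fS_\gamma, s_\lambda\rangle$, use compatibility of the pairing with the Hall inner product under the forgetful map together with $\chi(\fS_\gamma)=s_\gamma$ (Corollary~\ref{thm:projection}), and then apply Proposition~\ref{prop:schurcomposition}. The only difference is that you spell out the straightening bijection $\sigma\mapsto\gamma$ and the sign check $(-1)^{\sigma^{-1}}=(-1)^{\sigma}$, which the paper leaves implicit in its citation of Proposition~\ref{prop:schurcomposition}.
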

\begin{proof}
Suppose $s_\lambda = \sum_\alpha c_\alpha \fS_\alpha^*$. Then \[ c_\beta = \left\langle \fS_\beta, \sum_\alpha c_\alpha \fS_\alpha^* \right\rangle = \left\langle \fS_\beta, s_\lambda \right\rangle = \left\langle \chi(\fS_\beta), s_\lambda \right\rangle = \left\langle s_\beta, s_\lambda \right\rangle = \frak{x}_{\beta, \lambda},\] where $\frak{x}_{\beta, \lambda}$ is $(-1)^\sigma$ or $0$ according to the conditions in Proposition \ref{prop:schurcomposition}.
\end{proof}

\begin{Example}
Let $\lambda = [2,2,2,1]$. Then $s_\lambda \in \sym \subseteq \Qsym$ can be expanded in the basis $\{ \fS^*_\alpha \}_\alpha$:
\[s_{2221} =  \fS^*_{2221} - \fS^*_{1321} - \fS^*_{2131} + \fS^*_{1141},\]
since only the permutations $\sigma \in \{ 1234, 2134, 1324, 2314 \}$ contribute to the sum in the expansion of $s_{2221}$.
There are potentially 24 terms in this sum, but for the partition $[2,2,2,1]$ it is easy to reason that $\sigma_4=4$ and $\sigma_1<3$.

These combinatorics arise in the paper of Egge, Loehr and Warrington \cite{ELW} when they describe how to obtain a Schur expansion given a quasi-symmetric fundamental expansion. In their language, these are called ``special rim hook tableau''.

\[
\tikztableau{{ X, X},{X, X},{X,X},{X}}
\tikztableau{{ X, X},{X, X},{X,X},{X}}
\tikztableau{{ X, X},{X, X},{X,X},{X}}
\tikztableau{{ X, X},{X, X},{X,X},{X}}
\put(-165,30){\line(1,0){18}}
\put(-165,13){\line(1,0){18}}
\put(-165,-4){\line(1,0){18}}
\put(-166,-5){$\centerdot$}
\put(-149,-5){$\centerdot$}
\put(-149,12){$\centerdot$}
\put(-166,12){$\centerdot$}
\put(-149,29){$\centerdot$}
\put(-166,29){$\centerdot$}
\put(-166,-22){$\centerdot$}
\put(-120,13){\line(1,0){17.5}}
\put(-120,-4){\line(1,0){18}}
\put(-102.5,13){\line(0,1){18}}
\put(-121,-5){$\centerdot$}
\put(-121,12){$\centerdot$}
\put(-121,29){$\centerdot$}
\put(-121,-22){$\centerdot$}
\put(-104,29){$\centerdot$}
\put(-104,-5){$\centerdot$}
\put(-75,30){\line(1,0){17}}
\put(-75,-4){\line(1,0){16.5}}
\put(-58.5,-4){\line(0,1){18}}
\put(-77,-5){$\centerdot$}
\put(-77,12){$\centerdot$}
\put(-77,29){$\centerdot$}
\put(-77,-22){$\centerdot$}
\put(-60,29){$\centerdot$}
\put(-60,12){$\centerdot$}
\put(-31.5,-4){\line(1,0){17.5}}
\put(-14,-4){\line(0,1){35}}
\put(-32.5,-5){$\centerdot$}
\put(-32.5,12){$\centerdot$}
\put(-32.5,29){$\centerdot$}
\put(-32.5,-22){$\centerdot$}
\put(-15.5,29){$\centerdot$}
\]
\end{Example}

Recall that by Proposition \ref{prop:schurcomposition} we know that
precisely one term in the right hand side of the expansion of Theorem \ref{thm:decompose} is equal to a partition
and we have the following procedure for going from the expansion of a symmetric function $F$ in the $\fS^\ast_\alpha$
basis to the Schur expansion.

\begin{Corollary}
If $F$ is symmetric, and the $\fS^*$-expansion of $F$ is $\sum c_\alpha \fS^*_\alpha$, then the Schur expansion of $F$ is
\[
F = \sum_{\lambda \vdash n} c_{\lambda} s_{\lambda},
\]
where the second sum is taken over all partitions $\lambda$.
\end{Corollary}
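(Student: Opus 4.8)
The plan is to show that when $F$ is symmetric, its expansion $\sum_\alpha c_\alpha \fS^*_\alpha$ can be grouped so that only the partition-indexed terms survive after collapsing, and the collapse is exactly the identity of Proposition \ref{prop:schurcomposition}. First I would recall that by Theorem \ref{thm:decompose}, each Schur function $s_\lambda$ (for $\lambda$ a partition of $n$ with $\ell(\lambda) = k$) has the expansion
\[
s_\lambda = \sum_{\sigma} (-1)^\sigma \fS^*_{\lambda_{\sigma_1}+1-\sigma_1, \dots, \lambda_{\sigma_k}+k-\sigma_k},
\]
summed over $\sigma \in S_k$ with all entries positive. The key observation is that the identity element (i.e. the term $\sigma = \id$) contributes precisely $\fS^*_\lambda$, and this is the \emph{only} term in which the index is a partition: indeed, if $[\lambda_{\sigma_1}+1-\sigma_1, \dots, \lambda_{\sigma_k}+k-\sigma_k]$ is itself a partition, then by the uniqueness clause of Proposition \ref{prop:schurcomposition} (applied to $\alpha = \lambda$, which already \emph{is} a partition and hence has $\sigma = \id$ as its unique sorting permutation) we must have $\sigma = \id$.

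Next I would argue as follows. Since $\{s_\lambda : \lambda \vdash n\}$ is a basis of $\sym_n$ and $F$ is symmetric, write $F = \sum_{\lambda \vdash n} d_\lambda s_\lambda$ for scalars $d_\lambda$. Substituting the expansion of each $s_\lambda$ from Theorem \ref{thm:decompose} gives $F$ as a linear combination of the $\fS^*_\alpha$. Now I compare coefficients of $\fS^*_\lambda$ for a \emph{partition} $\lambda$ on both sides: on the one hand it is $c_\lambda$ by hypothesis; on the other hand, by the observation above, the only way a partition index $\lambda$ arises from the double sum is via the $\sigma = \id$ term in the expansion of $s_\lambda$ itself, so this coefficient equals $d_\lambda \cdot (-1)^{\id} = d_\lambda$. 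Hence $c_\lambda = d_\lambda$ for all partitions $\lambda$, which is exactly the claim $F = \sum_{\lambda \vdash n} c_\lambda s_\lambda$.

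Alternatively — and perhaps more cleanly — I would pair directly: for a partition $\lambda \vdash n$, compute $c_\lambda = \langle \fS_\lambda, F \rangle = \langle \chi(\fS_\lambda), F \rangle = \langle s_\lambda, F \rangle$, using Corollary \ref{thm:projection} and the fact that $\chi$ is adjoint to the inclusion $\sym \hookrightarrow \Qsym$ (which is implicit in the duality setup, since the pairing on $\sym$ is the restriction of the $\Nsym$--$\Qsym$ pairing through $\chi$ and the inclusion). Since $F \in \sym$ and the Schur functions are orthonormal, $\langle s_\lambda, F \rangle$ is precisely the coefficient of $s_\lambda$ in the Schur expansion of $F$; hence that coefficient is $c_\lambda$, proving the corollary. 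The main subtlety to nail down carefully is the self-adjointness of $\chi$ with respect to the two pairings, i.e. that $\langle \chi(g), f \rangle_{\sym}$ agrees with $\langle g, \iota(f) \rangle_{\Nsym,\Qsym}$ for $g \in \Nsym$, $f \in \sym \subseteq \Qsym$ — but this is already used in the proof of Theorem \ref{thm:decompose} (in the chain $\langle \fS_\beta, s_\lambda \rangle = \langle \chi(\fS_\beta), s_\lambda \rangle$), so no new input is needed. I expect the only real care to be bookkeeping: making sure the sums are over partitions of the correct size $n$ and that no non-partition index $\alpha$ accidentally coincides with a partition-shaped composition, which is handled by the uniqueness in Proposition \ref{prop:schurcomposition}.
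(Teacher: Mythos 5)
Your first argument is correct and is essentially the paper's own justification: the corollary follows from Theorem \ref{thm:decompose} together with the observation (via the uniqueness in Proposition \ref{prop:schurcomposition} applied to $\alpha=\lambda$) that the identity permutation gives the only partition-indexed term, so comparing coefficients of $\fS^*_\lambda$ in the basis expansion yields $c_\lambda = d_\lambda$. Your alternative pairing argument, $c_\lambda = \langle \fS_\lambda, F\rangle = \langle \chi(\fS_\lambda), F\rangle = \langle s_\lambda, F\rangle$, is also valid and is really the same duality computation the paper already performs inside the proof of Theorem \ref{thm:decompose}, applied directly to $F$ instead of to $s_\lambda$; it buys a slightly shorter route that never needs the ``only one partition term'' observation, at the cost of invoking the compatibility of $\chi$ with the inclusion $\sym\subseteq\Qsym$, which the paper uses without further comment.
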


\begin{Example}
By Proposition \ref{cor:Hexpansion}, we have that the expansion of
\[
h_{22} = \fS^*_{22} - \fS^*_{13} + \fS^*_{31} + \fS^*_4.
\]
To recover the Schur expansion of $h_{22}$ in terms of Schur functions we can throw away all terms not indexed by partitions
and then $h_{22} = s_{22} + s_{31} + s_4$.
\end{Example}

\subsubsection{Dual immaculate Pieri conjecture}

Similar to the left Pieri rule of the immaculate basis, we conjecture that the dual immaculate basis has a multiplicity free signed Pieri rule. Explicitly \[F_i \fS_\alpha^* = \sum_\beta (-1)^{sign(\alpha, \beta)}\fS_\beta^*,\] for some collection of $\beta$ and some statistic $sign$. 

\begin{Example} We let $\alpha = [2,1,2]$ and $i=2$. Then:
\[F_2 \fS^*_{212} = - \fS^*_{1312} -
 \fS^*_{142} +
 \fS^*_{2212}+
 \fS^*_{3112}+
 \fS^*_{322}+
 \fS^*_{412}.\]
\end{Example}

\subsection{The product of immaculate functions}

In general, the product of two immaculate functions does not expand positively in the immaculate basis. However, for certain products we have a positive expansion.

\begin{Conjecture}\label{conj:positiveproduct}\cite{BBSSZ2}
If $\lambda$ is a partition, then the coefficients $\ccc_{\alpha,\lambda}^\beta$ appearing in \[ \fS_\alpha \fS_\lambda = \sum_\beta \ccc_{\alpha, \lambda}^\beta \fS_\beta,\] are non-negative integers.
\end{Conjecture}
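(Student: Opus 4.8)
The plan is to adapt to $\Nsym$ the classical proof that a Schur function times a partition-indexed Schur function is Schur-positive; it combines the Jacobi--Trudi identity, the Pieri rule, and a sign-reversing involution. First I would expand the right factor with the non-commutative Jacobi--Trudi rule (Theorem~\ref{thm:JT}). Writing $k=\ell(\lambda)$, this gives
\[
\fS_\alpha \fS_\lambda \;=\; \sum_{\sigma \in S_k} (-1)^\sigma\, \fS_\alpha\, \HH_{\lambda_1+\sigma_1-1}\HH_{\lambda_2+\sigma_2-2}\cdots\HH_{\lambda_k+\sigma_k-k},
\]
where terms with a negative subscript vanish and a subscript $0$ is dropped ($\HH_0=1$). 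Each remaining summand is $\fS_\alpha$ times a word in the $\HH_i$'s, which by iterating the right-Pieri rule (Theorem~\ref{thm:Pieri}) expands as a \emph{non-negative} integer combination of immaculate functions.

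Next I would encode this iterated Pieri expansion combinatorially. A chain $\alpha = \gamma^{(0)} \subset_{c_1} \gamma^{(1)} \subset_{c_2} \cdots \subset_{c_k} \gamma^{(k)} = \beta$ is recorded by labelling with the letter $i$ each cell added at the $i$-th step; because $\subset_s$ only extends existing rows to the right and creates at most one new bottom row at each step, this produces a filling of the skew shape $\beta/\alpha$ whose rows are weakly increasing and whose first (ambient) column is strictly increasing --- a skew analogue of an immaculate tableau --- and conversely every such filling comes from a unique chain. Write $K_{\beta/\alpha,c}$ for the number of such fillings of content $c=(c_1,\dots,c_k)$, and set $\lambda+\sigma-\id:=(\lambda_1+\sigma_1-1,\dots,\lambda_k+\sigma_k-k)$. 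Then Theorem~\ref{thm:Pieri} gives $\fS_\alpha\HH_{c_1}\cdots\HH_{c_k} = \sum_\beta K_{\beta/\alpha,c}\,\fS_\beta$, and combining with the previous display,
\[
\ccc_{\alpha,\lambda}^\beta \;=\; \sum_{\sigma\in S_k}(-1)^\sigma\, K_{\beta/\alpha,\ \lambda+\sigma-\id}.
\]
The term $\sigma=\id$ contributes the non-negative number $K_{\beta/\alpha,\lambda}$; the claim is that everything else cancels.

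The crux is therefore to construct a shape-preserving, sign-reversing involution on $\bigsqcup_{\sigma\in S_k}\{\sigma\}\times\{T:\ \shape(T)=\beta/\alpha,\ \content(T)=\lambda+\sigma-\id\}$ whose fixed points are exactly the pairs with $\sigma=\id$. Given $(\sigma,T)$ with $\sigma\neq\id$, one would take the smallest index $i$ with $\sigma_i>\sigma_{i+1}$ and perform a Bender--Knuth-type reconfiguration of the cells of $T$ labelled $i$ and $i+1$, producing a filling $T'$ with content $\lambda+\sigma'-\id$, where $\sigma'$ is obtained from $\sigma$ by interchanging $\sigma_i$ and $\sigma_{i+1}$; the pair $(\sigma',T')$ then carries the opposite sign. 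Once the map is shown to be well-defined and involutive one obtains $\ccc_{\alpha,\lambda}^\beta=K_{\beta/\alpha,\lambda}\ge 0$, together with an explicit ``immaculate Littlewood--Richardson rule.'' I expect this last step to be the main obstacle: in the classical argument the analogous involution leans heavily on full column-strictness of semistandard tableaux, whereas immaculate tableaux are constrained only in the first column and the Pieri rule may append an entirely new bottom row, so both the skew shapes and the local swap are more delicate; plausibly the correct fixed-point set is cut out by an extra lattice-word-type condition that must be pinned down and shown stable under the involution. This is the content of the subsequent paper~\cite{BBSSZ2}.

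Finally, a word on what I would \emph{not} attempt: the tempting induction on $\ell(\lambda)$ via $\fS_\lambda=\BB_{\lambda_1}\fS_{[\lambda_2,\dots,\lambda_k]}$ does not go through directly, since expanding $\BB_{\lambda_1}$ and applying $F_{1^i}^\perp$ to $\fS_{[\lambda_2,\dots,\lambda_k]}$ (Proposition~\ref{prop:eperponfS}) yields immaculate functions indexed by tuples that are generally not partitions, so the hypothesis is lost; one would first have to guess and prove a stronger self-referential positivity statement, which seems no easier than the involution above. The base case $\ell(\lambda)=1$, namely $\fS_\alpha\HH_{\lambda_1}$, is of course precisely the Pieri rule.
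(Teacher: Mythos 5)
The statement you are trying to prove is stated in this paper only as a conjecture (attributed to the companion paper \cite{BBSSZ2}); the paper itself supplies no argument, so there is nothing to compare your route against except completeness. And your proposal is not complete: the reduction via Theorem~\ref{thm:JT} and iterated applications of Theorem~\ref{thm:Pieri} to the alternating sum $\ccc_{\alpha,\lambda}^\beta=\sum_{\sigma\in S_k}(-1)^\sigma K_{\beta/\alpha,\lambda+\sigma-\id}$ is fine, but the sign-reversing involution that is supposed to produce the cancellation is exactly the part you leave unconstructed, and it is the entire mathematical content of the statement. You cannot wave at a ``Bender--Knuth-type reconfiguration'': the classical swap relies on column-strictness of semistandard fillings, which immaculate tableaux lack away from the first column, and the $\subset_s$ moves can create a new bottom row, so there is no evident local rule exchanging the multiplicities of $i$ and $i+1$ while preserving the chain condition.

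Moreover, the tentative conclusion $\ccc_{\alpha,\lambda}^\beta=K_{\beta/\alpha,\lambda}$ that would follow from an involution whose only fixed points have $\sigma=\id$ is actually false, so no such involution exists. Take $\alpha=[1,2]$, $\lambda=[3,1]$, $\beta=[2,3,2]$: the paper's own example gives $\ccc_{\alpha,\lambda}^{\beta}=2$, whereas there are three chains $[1,2]\subset_3\gamma\subset_1[2,3,2]$ (through $\gamma=[2,3,1]$, $[2,2,2]$ and $[1,3,2]$), i.e.\ $K_{\beta/\alpha,\lambda}=3$. So the extra ``lattice-word-type'' restriction you mention in passing is not an optional refinement of the fixed-point set; identifying it and proving it is stable under a correctly defined involution is precisely the open work you have deferred to \cite{BBSSZ2}. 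As it stands, your text is a plausible plan of attack, not a proof of the conjecture.
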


\begin{Example} We give an example of Theorem \ref{conj:positiveproduct} with $\alpha = [1,2]$ and $\lambda = [3,1]$.
\[\fS_{12}\fS_{31}=
\fS_{1231} + \fS_{1321} + \fS_{133} + \fS_{1411} + \fS_{142} + \fS_{151} + \fS_{2221} + \fS_{223}\]\[ + \fS_{2311} + 2\fS_{232} + 2\fS_{241} + \fS_{25} + \fS_{3211} + \fS_{322} + 2\fS_{331} + \fS_{34} + \fS_{421} + \fS_{43}\]
\end{Example}

\begin{Example} We need not look very far to find mixed negative signs in a product of two immaculate functions when
the right one is not indexed by a partition.  For instance
\[ \fS_{1} \fS_{13} = \fS_{113} - \fS_{221} - \fS_{32}~.\]
But products of two immaculate functions indexed by two compositions can potentially be much more complicated. For instance,
\begin{align*} \fS_{11} \fS_{1313} &= \fS_{111313} - \fS_{122113} + \fS_{122221} + \fS_{122232}
-\fS_{13213} - \fS_{212113} + \fS_{212221}\\ 
&+ \fS_{21232} - \fS_{221113} + \fS_{221221} 
+ \fS_{22132}  + \fS_{222121}  - \fS_{22213} + \fS_{222211} \\
& + \fS_{22222}  + \fS_{22231}
- \fS_{23113}  + \fS_{23221}  - \fS_{31213}  - \fS_{32113} + \fS_{32221}  - \fS_{3313} ~.
\end{align*}
The image of the immaculate function indexed by $[1,3,2,1,3]$ under the forgetful map is $\chi(\fS_{13213}) = s_{22222}$. We do not have a rule to predict its coefficient of $-1$ in the above expansion.
This example should indicate how surprising
Theorem \ref{conj:positiveproduct} is given that there are 11 positive terms and 9 negative terms in an example of a
product $\fS_\alpha \fS_\beta$ when the immaculate function $\fS_\beta$ is not indexed by a partition.
\end{Example}

\subsection{The immaculate poset, paths and skew immaculate tableaux}

We create a labelled poset on the set of all compositions, which we call the immaculate poset $\mathfrak{P}$. We place an arrow from $\alpha$ to $\beta$ if $\beta \subset_{1} \alpha$ (equivalently, $\fS_\beta H_1$ expands in the immaculate basis as $\fS_\beta H_1 = \fS_\alpha + \cdots$). Such a cover implies that $\alpha$ and $\beta$ differ by a single box. We give a label of $m$ to this cover, where $m$ is the row containing said box and denote this by $\alpha \xrightarrow{m} \beta$.

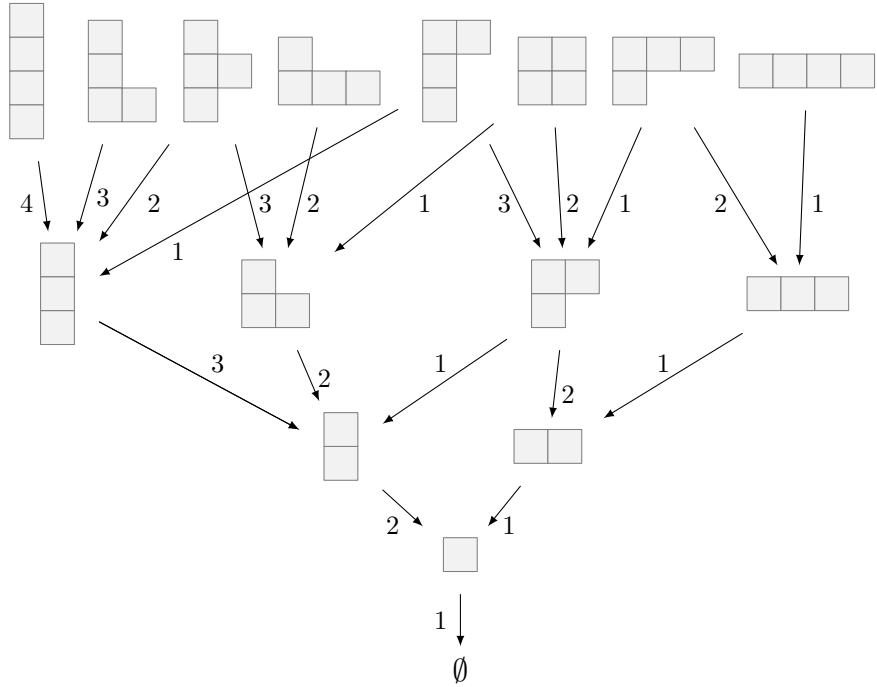
\begin{figure}[htb]
\begin{center}
\begin{tikzpicture}[scale=0.60,>=latex,line join=bevel,]
  \node (0) at (272.36bp,7bp) [draw,draw=none] {$\emptyset$};
  \node (1) at (272.36bp,76bp) [draw,draw=none] {$\tikztableausmall{{X}}$};
  \node (2) at (327.36bp,144bp) [draw,draw=none] {$\tikztableausmall{{X,X}}$};
  \node (11) at (197.36bp,144bp) [draw,draw=none] {$\tikztableausmall{{X}, {X}}$};
  \node (3) at (484.36bp,240bp) [draw,draw=none] {$\tikztableausmall{{X,X,X}}$};
  \node (21) at (338.36bp,240bp) [draw,draw=none] {$\tikztableausmall{{X,X}, {X}}$};
  \node (12) at (156.36bp,240bp) [draw,draw=none] {$\tikztableausmall{{X}, {X, X}}$};
  \node (111) at (19.359bp,240bp) [draw,draw=none] {$\tikztableausmall{{X}, {X}, {X}}$};
   \node (4) at (490bp,380bp) [draw,draw=none] {$\tikztableausmall{{X,X,X, X}}$};
      \node (31) at (400bp,380bp) [draw,draw=none] {$\tikztableausmall{{X,X,X}, {X}}$};
   \node (22) at (330bp,380bp) [draw,draw=none] {$\tikztableausmall{{X,X},{X, X}}$};
   \node (211) at (270bp,380bp) [draw,draw=none] {$\tikztableausmall{{X,X},{X}, {X}}$};
   \node (13) at (190bp,380bp) [draw,draw=none] {$\tikztableausmall{{X},{X,X, X}}$};
   \node (121) at (120bp,380bp) [draw,draw=none] {$\tikztableausmall{{X},{X,X}, {X}}$};
   \node (112) at (60bp,380bp) [draw,draw=none] {$\tikztableausmall{{X},{X},{X, X}}$};
   \node (1111) at (0bp,380bp) [draw,draw=none] {$\tikztableausmall{{X},{X},{X}, {X}}$};
  \definecolor{strokecol}{rgb}{0.0,0.0,0.0};
  \pgfsetstrokecolor{strokecol}
  \draw [<-] (11) -- (21);
  \draw [<-] (11) -- (12);
  \draw [<-] (11) -- (111);
  \draw [<-] (2) -- (21);
  \draw [<-] (1) -- (2);
  \draw [<-] (1) -- (11);
  \draw [<-] (0) -- (1);
  \draw [<-] (2) -- (3);
  \draw [<-] (11) -- (111);
  \draw [<-] (111) -- (1111);
    \draw [<-] (111) -- (112);
      \draw [<-] (111) -- (121);
        \draw [<-] (111) -- (211);
          \draw [<-] (12) -- (121);
            \draw [<-] (12) -- (13);
              \draw [<-] (12) -- (22);
                \draw [<-] (21) -- (211);
                  \draw [<-] (21) -- (22);
                    \draw [<-] (21) -- (31);
                      \draw [<-] (3) -- (31);
                        \draw [<-] (3) -- (4);
\draw (260bp,38bp) node {\footnotesize $1$};

\draw (303bp,98bp) node {\footnotesize $1$};
\draw (230bp,98bp) node {\footnotesize $2$};

\draw (260bp,200bp) node {\footnotesize $1$};
\draw (120bp,200bp) node {\footnotesize $3$};
\draw (187bp,190bp) node {\footnotesize $2$};
\draw (340bp,180bp) node {\footnotesize $2$};
\draw (400bp,200bp) node {\footnotesize $1$};

\draw (0bp,300bp) node {\footnotesize $4$};
\draw (48bp,304bp) node {\footnotesize $3$};
\draw (80bp,300bp) node {\footnotesize $2$};
\draw (95bp,270bp) node {\footnotesize $1$};
\draw (150bp,300bp) node {\footnotesize $3$};
\draw (180bp,300bp) node {\footnotesize $2$};
\draw (250bp,300bp) node {\footnotesize $1$};
\draw (300bp,300bp) node {\footnotesize $3$};
\draw (343bp,300bp) node {\footnotesize $2$};
\draw (376bp,300bp) node {\footnotesize $1$};
\draw (436bp,300bp) node {\footnotesize $2$};
\draw (497bp,300bp) node {\footnotesize $1$};
\end{tikzpicture}
\end{center}
\caption{The first few levels of $\mathfrak{P}$}
\label{fig:immaculateposet}
\end{figure}

Maximal chains on this poset from $\alpha$ to $\emptyset$ are equivalent to
immaculate standard tableaux and maximal chains on an interval from
$\alpha$ to $\beta$ are what we will call skew immaculate tableaux of shape $\alpha/\beta$.  
We can visualize a path 
$\{\alpha = \beta^{(0)} \xrightarrow{m_1} \beta^{(1)} \xrightarrow{m_2} \cdots \xrightarrow{m_k} \beta^{(k)} = \beta\}$ 
as a labelled composition diagram of outer shape $\alpha$ by
labeling a cell in row $m_i$ with a $k-i+1$.  The cells representing $\beta$ will not have a label and the labels must increase from
left to right in a row.

\begin{Example} Consider the example of a skew immaculate tableau of shape $[2,2,2]/[1,2]$ given by the diagram below.  
We show the labelled composition on the left and the corresponding representation as a path to the right. 
$$\tikztableausmall{{,2},{,}, {1,3}}~\longleftrightarrow~\{[2,2,2] \xrightarrow{3} [2,2,1] \xrightarrow{1} [1,2,1] \xrightarrow{3} [1,2]\}~.$$
\end{Example}

A path $P = \{\alpha = \beta^{(0)} \xrightarrow{m_1} \beta^{(1)} \xrightarrow{m_2} 
\cdots \xrightarrow{m_k} \beta^{(k)} = \beta\}$ in $\mathfrak{P}$ 
will be called a horizontal $k$-strip if $m_1 \leq m_2 \leq \cdots \leq m_k$.  
The Pieri rule for the immaculate functions from Theorem \ref{thm:Pieri} says that there is
a horizontal $k$-strip from $\alpha$ to $\beta$ if and only if
$\fS_{\alpha}$ appears in the expansion of $\fS_{\beta} H_k$.

The \emph{descent composition} of a word $\ell_1, \ell_2, \dots, \ell_n$ is
the composition
$[
i_1,
i_2 - i_{1},
\ldots,
i_j - i_{j-1},
n - i_j
]$,
where $i_1 < i_2 < \dots < i_j$ are the descents of the word; that is, the elements in $\{1, \dots, n-1\}$ such
that $\ell_{i_a} > \ell_{i_a+1}$. For example, the descent composition of the word $3,2,1,1,4,1$ is $[1,1,3,1]$.
For a path $P = \{\beta_0 \xrightarrow{m_1} \beta_1 \xrightarrow{m_2} \cdots \xrightarrow{m_n} \beta_n\}$, 
the descent composition $\alpha(D(P))$ associated to $P$ is the reverse of the descent composition of the word 
$m_1, m_2, \dots, m_n$.  When $P$ is a standard tableau of shape $[\emptyset, \alpha]$, this notion of descent
set is equivalent to the definition of descent from Definition \ref{def:descentSIT}.
Equivalently, the descent composition describes how $P$ can be decomposed into horizontal 
$k$-strips of maximal lengths.

\subsection{Skew dual immaculate quasi-symmetric functions}

Following notions in \cite{BMSW}, since there is a positive right Pieri rule on the
immaculate basis of $\Nsym$,  there is a natural way of constructing skew dual immaculate
elements of $\Qsym$. 
For an interval $\{ \gamma : \beta \subseteq \gamma \subseteq \alpha \} \subseteq \mathfrak{P}$, we then define the skew dual 
immaculate function as
\[\fS_{\alpha/\beta}^* = \sum_{\gamma} \langle \fS_\beta H_\gamma, \fS_\alpha^\ast \rangle M_\gamma~. \]
By Theorem \ref{thm:Pieri} and the notion of standardization from Definition \ref{def:standardization},
the coefficient $\langle \fS_\alpha H_\gamma, \fS_\alpha^\ast \rangle$ is equal
to the number of skew standard immaculate tableaux of shape $\alpha/\beta$
with descent composition coarser than $\gamma$.

Right multiplication by $H_r$ on an element $\fS_\beta$ is a {\it Pieri operator}
\cite{BMSW} on the poset of compositions
defined in the previous section.  By Theorem 2.3 of \cite{BMSW} we have an expansion of
these elements into other bases by the use of the dual pairing between $\Nsym$ and $\Qsym$.
\begin{Proposition} \label{prop:fundDimmexpan}  For $\{ \gamma : \beta \subseteq \gamma \subseteq \alpha \}$ 
an interval of $\mathfrak{P}$,
\[ \fS_{\alpha/\beta}^* =\sum_\gamma
\langle \fS_\beta R_\gamma, \fS_\alpha^\ast \rangle F_\gamma = 
\sum_\gamma \langle \fS_\beta \fS_\gamma, \fS_\alpha^\ast \rangle \fS_\gamma^\ast~. \]
\end{Proposition}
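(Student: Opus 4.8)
The plan is to read off both equalities from the single element $\fS_{\alpha/\beta}^\ast \in \Qsym_{|\alpha|-|\beta|}$, by expanding it in two different bases of $\Qsym$ and rewriting the coefficients via the duality between $\Nsym$ and $\Qsym$. The only inputs beyond the defining formula $\fS_{\alpha/\beta}^\ast = \sum_\gamma \langle \fS_\beta \HH_\gamma, \fS_\alpha^\ast\rangle M_\gamma$ are the orthogonality relations $\langle \HH_\gamma, M_\delta\rangle = \delta_{\gamma\delta}$, $\langle R_\gamma, F_\delta\rangle = \delta_{\gamma\delta}$, $\langle \fS_\gamma, \fS_\delta^\ast\rangle = \delta_{\gamma\delta}$, and the fact that $\{\HH_\gamma\}$ is a basis of $\Nsym$. (All sums are finite: $\fS_\beta\HH_\delta$ is homogeneous of degree $|\beta|+|\delta|$, so it pairs nontrivially against $\fS_\alpha^\ast$ only when $|\delta| = |\alpha|-|\beta|$.)

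The first step is to establish the key identity
\[ \langle h, \fS_{\alpha/\beta}^\ast \rangle = \langle \fS_\beta\, h, \fS_\alpha^\ast \rangle \quad\text{for every } h \in \Nsym_{|\alpha|-|\beta|}. \]
For $h = \HH_\gamma$ this is immediate from the definition: pairing $\HH_\gamma$ against $\sum_\delta \langle \fS_\beta \HH_\delta, \fS_\alpha^\ast\rangle M_\delta$ selects the $\delta=\gamma$ summand since $\langle \HH_\gamma, M_\delta\rangle = \delta_{\gamma\delta}$. Both sides of the identity are linear in $h$ and the $\HH_\gamma$ span $\Nsym$, so the identity holds for all $h$.

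The second step derives the two claimed expansions. For any $G \in \Qsym$ one has $G = \sum_\gamma \langle R_\gamma, G\rangle F_\gamma$ (the formula recorded just before Proposition \ref{prop:MonomialPositive}); applying it to $G = \fS_{\alpha/\beta}^\ast$ and invoking the key identity with $h = R_\gamma$ gives $\fS_{\alpha/\beta}^\ast = \sum_\gamma \langle R_\gamma, \fS_{\alpha/\beta}^\ast\rangle F_\gamma = \sum_\gamma \langle \fS_\beta R_\gamma, \fS_\alpha^\ast\rangle F_\gamma$, the middle expression. Since $\{\fS_\gamma\}$ and $\{\fS_\gamma^\ast\}$ are dual bases, one also has $\fS_{\alpha/\beta}^\ast = \sum_\gamma \langle \fS_\gamma, \fS_{\alpha/\beta}^\ast\rangle \fS_\gamma^\ast$, and the key identity with $h = \fS_\gamma$ rewrites this as $\sum_\gamma \langle \fS_\beta \fS_\gamma, \fS_\alpha^\ast\rangle \fS_\gamma^\ast$, the right-hand expression. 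To apply the key identity to $h = R_\gamma$ and $h = \fS_\gamma$ one only needs that these lie in $\Nsym$ and expand in the $\HH$-basis, which holds by Equation \eqref{def:Ribbon} and Theorem \ref{thm:JT} respectively.

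I do not expect a genuine obstacle here: the argument is bookkeeping with the three dual pairings, and the one point requiring care is tracking degrees so that the pairings are nondegenerate and the sums finite, and so that the reduction of the key identity from $h=\HH_\gamma$ to arbitrary $h$ is legitimate. Alternatively, the whole computation can be bypassed by citing Theorem 2.3 of \cite{BMSW}: the multiplicity-free Pieri rule of Theorem \ref{thm:Pieri} states precisely that right multiplication by $\HH_r$ is a Pieri operator on $\mathfrak{P}$, and the cited theorem then produces exactly these change-of-basis expansions. I would present the elementary duality argument above and note that it is the specialization of the \cite{BMSW} framework to the present situation.
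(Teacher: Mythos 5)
Your argument is correct, and it differs in character from what the paper does: the paper gives no direct computation at all, but instead observes that right multiplication by $\HH_r$ is a Pieri operator on the poset $\mathfrak{P}$ (via Theorem \ref{thm:Pieri}) and invokes Theorem 2.3 of \cite{BMSW}, which packages exactly these change-of-basis expansions for skew elements arising from Pieri operators. Your route is a self-contained duality calculation: the key identity $\langle h, \fS_{\alpha/\beta}^\ast\rangle = \langle \fS_\beta h, \fS_\alpha^\ast\rangle$, checked on the $\HH$-basis and extended by linearity, immediately converts the standard expansions $G = \sum_\gamma \langle R_\gamma, G\rangle F_\gamma$ and $G = \sum_\gamma \langle \fS_\gamma, G\rangle \fS_\gamma^\ast$ into the two claimed formulas, and it never actually uses the Pieri rule --- the statement is purely formal once $\fS_{\alpha/\beta}^\ast$ is defined by its monomial expansion. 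What the elementary argument buys is independence from the external reference and transparency (indeed, the paper's own proof of Proposition \ref{prop:fundamentalexpansion} silently uses your key identity in the form $\langle R_\tau, \fS_{\alpha/\beta}^\ast\rangle = \langle \fS_\beta R_\tau, \fS_\alpha^\ast\rangle$); what the paper's citation buys is the placement of the construction in the general Pieri-operator framework of \cite{BMSW}, where the combinatorial meaning of the coefficients (paths in $\mathfrak{P}$, horizontal strips) comes along for free. Your closing remark that the two are the same result specialized is accurate, so there is no gap; only note that the Pieri rule is needed not for the identity itself but for interpreting $\langle \fS_\beta \HH_\gamma, \fS_\alpha^\ast\rangle$ as a count of skew immaculate tableaux.
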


The coefficients $\ccc^{\alpha}_{\beta,\gamma} = \langle \fS_\beta \fS_\gamma, \fS_\alpha^\ast \rangle$ are
those that appear in the expansion
\[ \fS_\beta \fS_\gamma = \sum_\alpha \ccc_{\beta,\gamma}^\alpha \fS_\alpha~. \]
Theorem \ref{conj:positiveproduct} states that certain of these coefficients will be positive,
but they aren't positive in general.

The expansion of these elements in the fundamental and monomial bases are positive.
An argument similar to that given for Theorem \ref{thm:Rpositive} and 
Proposition \ref{prop:FundamentalPositive} shows that
the skew immaculate tableaux can be used to give a combinatorial expansion of
these elements in the fundamental basis.
\begin{Proposition} \label{prop:fundamentalexpansion}
Let $\{ \gamma : \beta \subseteq \gamma \subseteq \alpha \}$ be an interval of $\mathfrak{P}$, then
\[ \fS_{\alpha/\beta}^\ast = \sum_P F_{\alpha(D(P))}\]
where the sum is over all paths $P$ in $\mathfrak{P}$ from $\alpha$ to $\beta$ (alternatively, skew standard
immaculate tableaux of shape $\alpha/\beta$).
\end{Proposition}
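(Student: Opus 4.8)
The plan is to mirror the arguments already used for Theorem~\ref{thm:Rpositive} and Proposition~\ref{prop:FundamentalPositive}: start from the defining monomial expansion of $\fS_{\alpha/\beta}^*$, identify the monomial coefficients combinatorially via iterated Pieri, and then re-sum into the fundamental basis. First I would record, for a composition $\gamma$ with $|\gamma| = |\alpha|-|\beta|$, that $\langle \fS_\beta H_\gamma, \fS_\alpha^*\rangle$ is by definition the coefficient of $\fS_\alpha$ in the immaculate expansion of $\fS_\beta H_{\gamma_1}H_{\gamma_2}\cdots H_{\gamma_{\ell(\gamma)}}$. Applying the right-Pieri rule (Theorem~\ref{thm:Pieri}) $\ell(\gamma)$ times, this coefficient counts chains $\beta = \beta^{(0)} \subseteq \beta^{(1)} \subseteq \cdots \subseteq \beta^{(\ell(\gamma))} = \alpha$ with $\beta^{(i-1)} \subset_{\gamma_i}\beta^{(i)}$ for each $i$; equivalently, it counts paths $P$ in $\mathfrak{P}$ from $\alpha$ to $\beta$ that decompose into successive horizontal strips of lengths $\gamma_1,\dots,\gamma_{\ell(\gamma)}$.

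Next I would invoke the standardization/descent dictionary (the skew analogue of Lemma~\ref{lemma:bijection}, already asserted in the discussion following Proposition~\ref{prop:fundDimmexpan}): a path $P$ admits a decomposition into horizontal strips of lengths $\gamma$ exactly when $\gamma$ refines the descent composition $\alpha(D(P))$, since the maximal horizontal strips of $P$ are precisely its runs between consecutive descents of the label word and such a run may be split further but never merged across a descent. Writing $L_{\alpha/\beta,\delta}$ for the number of paths from $\alpha$ to $\beta$ with $\alpha(D(P)) = \delta$, this yields $\langle \fS_\beta H_\gamma, \fS_\alpha^*\rangle = \sum_{\delta\geq\gamma} L_{\alpha/\beta,\delta}$, the sum running over $\delta$ coarser than $\gamma$ in refinement order.

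Finally I would substitute into the definition of $\fS_{\alpha/\beta}^*$ and interchange summations, using $F_\delta = \sum_{\gamma\leq\delta} M_\gamma$:
\[
\fS_{\alpha/\beta}^*
= \sum_\gamma \langle \fS_\beta H_\gamma, \fS_\alpha^*\rangle M_\gamma
= \sum_\gamma \Bigl(\sum_{\delta\geq\gamma} L_{\alpha/\beta,\delta}\Bigr) M_\gamma
= \sum_\delta L_{\alpha/\beta,\delta} \sum_{\gamma\leq\delta} M_\gamma
= \sum_\delta L_{\alpha/\beta,\delta}\, F_\delta,
\]
and regrouping $\sum_\delta L_{\alpha/\beta,\delta}F_\delta$ as a sum over individual paths gives $\sum_P F_{\alpha(D(P))}$, as claimed. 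Alternatively the same conclusion falls out of Proposition~\ref{prop:fundDimmexpan} once one checks $\langle \fS_\beta R_\gamma, \fS_\alpha^*\rangle = L_{\alpha/\beta,\gamma}$, which follows by Möbius inversion on refinement order from $\langle \fS_\beta H_\gamma,\fS_\alpha^*\rangle = \sum_{\delta\geq\gamma}\langle \fS_\beta R_\delta,\fS_\alpha^*\rangle$ (itself a consequence of $H_\gamma = \sum_{\delta\geq\gamma} R_\delta$) together with the combinatorial identity of the previous paragraph.

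The main obstacle is the second step: making rigorous the passage from ``$P$ decomposes into horizontal strips of prescribed lengths'' to ``$\alpha(D(P))$ is coarser than $\gamma$''. This is the skew version of Lemma~\ref{lemma:bijection}, and it requires checking that standardization of a skew immaculate tableau is compatible with the horizontal-strip decomposition and that descents are exactly the positions where strips cannot be merged — a verification complicated slightly by the English/row conventions and by the reversal built into the definition of $\alpha(D(P))$. Everything else is formal manipulation with the $\Nsym$--$\Qsym$ pairing and the triangularity already in hand.
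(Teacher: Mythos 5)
Your proposal is correct and is in substance the paper's own argument: the key identity $\langle \fS_\beta H_\gamma,\fS_\alpha^\ast\rangle=\sum_{\delta\geq\gamma}L_{\alpha/\beta,\delta}$ (iterated Pieri rule plus the standardization/descent dictionary) is exactly what the paper relies on, and the ``alternative'' you sketch at the end --- pairing with $R_\tau$ and M\"obius inversion over refinement order --- is literally the proof given there. Your main route only repackages this computation without signs, regrouping the monomial expansion via $F_\delta=\sum_{\gamma\leq\delta}M_\gamma$ rather than extracting ribbon coefficients, which is the dual formulation of the same manipulation.
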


\begin{proof} Let $K_{\alpha/\beta,\gamma} = \langle \fS_\beta H_\gamma, \fS_\alpha^\ast \rangle$ which
we have already noted is equal to the number of skew immaculate tableaux of shape $\alpha/\beta$
whose descent composition is coarser than~$\gamma$.  Let $L_{\alpha/\beta, \tau}$ to be the number of skew immaculate tableaux of shape
$\alpha/\beta$ whose descent composition is equal to $\tau$.  Clearly we have 
$K_{\alpha/\beta,\gamma} = \sum_{\tau \geq \gamma} L_{\alpha/\beta, \tau}$ and by M\"obius inversion
this is equivalent to $\sum_{\gamma \geq \tau} (-1)^{\ell(\tau)-\ell(\gamma)} K_{\alpha/\beta,\gamma} 
= L_{\alpha/\beta,\tau}$~.
\begin{align*}
\langle R_\tau, \fS_{\alpha/\beta}^\ast \rangle &=
\langle \fS_\beta R_\tau, \fS_\alpha^\ast \rangle \\
&= \sum_{\gamma \geq \tau} (-1)^{\ell(\tau)-\ell(\gamma)} \langle \fS_\beta H_\gamma, \fS_\alpha^\ast \rangle\\
&= \sum_{\gamma \geq \tau} (-1)^{\ell(\tau)-\ell(\gamma)} K_{\alpha/\beta,\gamma} = L_{\alpha/\beta,\tau}~.\qedhere
\end{align*}
\end{proof}

\begin{Example}
There are $6$ paths from $[1,3,2]$ to $[1,1]$.  We give their representation as a tableau diagram in the table below.
Below each of the diagrams we also give the descent composition of the path.
\begin{center}
\begin{tabular}{cccccc}
$\tikztableausmall{{X},{X,1,2}, {3,4}}$&
$\tikztableausmall{{X},{X,1,3}, {2,4}}$&
$\tikztableausmall{{X},{X,1,4}, {2,3}}$&
$\tikztableausmall{{X},{X,2,3}, {1,4}}$&
$\tikztableausmall{{X},{X,2,4}, {1,3}}$&
$\tikztableausmall{{X},{X,3,4}, {1,2}}$\cr
$[4]$&$[2,2]$&$[3,1]$&$[1,3]$&$[1,2,1]$&$[2,2]$
\end{tabular}
\end{center}
The fundamental expansion of this element is positive and is the sum of $6$ terms corresponding to
these tableaux, but the expansion in the dual immaculate basis is not positive in this case.
\[\fS_{132/11}^\ast = F_{121} + F_{13} + 2 F_{22} + F_{31} + F_4 = -\fS_{13}^\ast + \fS_{22}^\ast + \fS_{31}^\ast + \fS_4^\ast~.\] 
\end{Example}

\section{Hall-Littlewood basis for $\Nsym$}

As an application and demonstration of the power of the immaculate basis, 
we use this section to build lifts of Hall-Littlewood functions inside 
$\Nsym$. We begin by reminding the reader of a definition of Hall-Littlewood 
symmetric functions inside $\sym$ defined by creation (or vertex) operators.

\subsection{Hall-Littlewood symmetric functions}

The Hall-Littlewood symmetric functions $P_\lambda$ were first studied by Hall. 
They are symmetric functions with a parameter $q$ (i.e. elements of 
$\sym[q] := \mathbb{Q}(q)[h_1, h_2, \dots]$). They generically ($q$ not 
specialized) form a basis for $\sym[q]$. When $q=0$, they specialize to 
monomial symmetric functions and at $q = 1$ they specialize to Schur functions. 
We are interested in the corresponding dual basis 
$Q'_\mu$ which have the property that
$\langle P_\lambda, Q'_\mu \rangle = \delta_{\lambda, \mu}$.
The elements $Q'_\mu$ have the expansion in the Schur basis
\[ Q'_\mu = \sum_\lambda K_{\lambda\mu}(q) s_\lambda\]
where $K_{\lambda\mu}(q)$ are the $q$-Kostka
polynomials.  At $q=0$ the $Q'_\lambda$ specialize to Schur functions $s_\lambda$
and at $q=1$ they specialize to homogeneous complete functions $h_\lambda$.

We define an operator $\widetilde{\B}_m: \sym[q]_n \to \sym[q]_{n+m}$ by: \[ \widetilde{\B}_m := \sum_{i\geq0} q^i \B_{m+i} h_i^\perp.\]

\begin{Theorem}\label{jing} (Jing \cite{Jing}, see also \cite{Mac} p. 237-8)
If $m \geq \lambda_1$, then \[ \widetilde{\B}_m Q'_\lambda = Q'_{(m, \lambda)}~.\]
\end{Theorem}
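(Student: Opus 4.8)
The plan is to realize $\widetilde{\B}_m$ through a generating-function factorization and then reduce the statement to the creation-operator description of the $q$-Kostka polynomials. Set $\B(z) = \sum_{r \in \ZZ} z^r \B_r$ and $\widetilde{\B}(z) = \sum_{m \in \ZZ} z^m \widetilde{\B}_m$, and let $D(u) = \sum_{i \geq 0} u^i h_i^\perp$ be the operator adjoint to multiplication by $\sum_{i \geq 0} u^i h_i = \prod_j (1 - u x_j)^{-1}$. Reindexing the sum defining $\widetilde{\B}_m$, exactly as in the derivation of \eqref{Bz-factorization}, gives $\widetilde{\B}(z) = \B(z)\, D(q/z)$. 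Since $Q'_{\emptyset} = 1$ and $D(u)(1) = 1$, a short induction shows that Theorem~\ref{jing} is equivalent to the product formula $\widetilde{\B}_{\lambda_1} \widetilde{\B}_{\lambda_2} \cdots \widetilde{\B}_{\lambda_\ell}(1) = Q'_\lambda$ for every partition $\lambda$: if that holds, then $\widetilde{\B}_m Q'_\lambda = \widetilde{\B}_m \widetilde{\B}_{\lambda_1} \cdots \widetilde{\B}_{\lambda_\ell}(1) = \widetilde{\B}_{(m, \lambda_1, \dots, \lambda_\ell)}(1) = Q'_{(m,\lambda)}$, the tuple $(m, \lambda)$ being a partition precisely because $m \geq \lambda_1$.

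The computational core is a single commutation relation. A reindexing of the same type shows $\B(z) = H(z)\, E^\perp(-1/z)$, where $H(z) = \sum_n z^n h_n$ acts by multiplication and $E^\perp(w) = \sum_i w^i e_i^\perp$. Now $D(u)$ commutes with $E^\perp(-1/z)$, because adjoints of multiplication operators commute; and $\sum_i u^i h_i$ is grouplike, so $D(u)$ is an algebra endomorphism of $\sym$ and therefore $D(u)\bigl(H(z) f\bigr) = \bigl(D(u) H(z)\bigr)\bigl(D(u) f\bigr)$. Since $h_k^\perp h_n = h_{n-k}$, one has $D(u) H(z) = (1 - uz)^{-1} H(z)$ as a symmetric function, whence, as operators, $D(u)\, \B(z) = (1 - uz)^{-1}\, \B(z)\, D(u)$. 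Iterating this to push every factor $D(q/z_r)$ rightward through $\B(z_{r+1}), \dots, \B(z_\ell)$ and then annihilate $1$, one obtains
\[
\widetilde{\B}(z_1) \widetilde{\B}(z_2) \cdots \widetilde{\B}(z_\ell)\,(1)
\;=\; \prod_{1 \leq i < j \leq \ell} \frac{1}{1 - q z_j / z_i} \;\cdot\; \B(z_1) \B(z_2) \cdots \B(z_\ell)\,(1).
\]

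Finally I would compare coefficients of $z_1^{\lambda_1} z_2^{\lambda_2} \cdots z_\ell^{\lambda_\ell}$ on the two sides. On the left this coefficient is $\widetilde{\B}_{\lambda_1} \cdots \widetilde{\B}_{\lambda_\ell}(1)$; on the right, Bernstein's Theorem~\ref{th:bern} gives $\B(z_1) \cdots \B(z_\ell)(1) = \sum_{\mu \in \ZZ^\ell} z_1^{\mu_1} \cdots z_\ell^{\mu_\ell}\, s_\mu$, so the coefficient is $\sum_\mu \bigl(\text{coeff. of } z^{\lambda - \mu} \text{ in } \textstyle\prod_{i<j}(1 - q z_j/z_i)^{-1}\bigr) s_\mu$. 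The coefficients in parentheses are, by definition, the $q$-Kostka polynomials $K_{\mu\lambda}(q)$ in the vertex-operator description of the Hall--Littlewood functions, so the right side equals $\sum_\mu K_{\mu\lambda}(q)\, s_\mu = Q'_\lambda$, completing the product formula and hence the theorem. The main obstacle is precisely this identification: showing that the generating-function coefficients above really are the $K_{\mu\lambda}(q)$ occurring in $Q'_\mu = \sum_\lambda K_{\lambda\mu}(q)\, s_\lambda$ and $\langle P_\lambda, Q'_\mu \rangle = \delta_{\lambda\mu}$. If one prefers not to import this as a known fact (Jing \cite{Jing}; Macdonald \cite{Mac}, pp.~237--8), one must instead verify $\langle P_\nu, \widetilde{\B}_{\lambda_1} \cdots \widetilde{\B}_{\lambda_\ell}(1) \rangle = \delta_{\nu\lambda}$ directly, which requires computing the adjoint $\widetilde{\B}_m^\perp$ and its action on the $P$-basis --- a separate argument of comparable length. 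Throughout, the hypothesis $m \geq \lambda_1$ is what keeps $(m, \lambda)$ a partition, so that the clean product formula applies without the Schur-straightening corrections of Proposition~\ref{prop:schurcomposition}.
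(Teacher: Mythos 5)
First, note that the paper does not prove this statement at all: Theorem \ref{jing} is quoted with attribution to Jing \cite{Jing} and Macdonald \cite{Mac}, and is used as an imported fact on which Definition \ref{def:Qpsym} and the later non-commutative constructions rest. So there is no internal proof to compare against; the question is whether your argument would stand on its own.

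Your generating-function manipulations are correct and standard: the reindexing $\widetilde{\B}(z)=\B(z)D(q/z)$, the factorization $\B(z)=H(z)E^\perp(-1/z)$, the fact that $D(u)$ is an algebra endomorphism (skewing by a grouplike element) commuting with $E^\perp(-1/z)$, and the resulting commutation $D(u)\,\B(z)=(1-uz)^{-1}\B(z)\,D(u)$, which yields
\[
\widetilde{\B}(z_1)\cdots\widetilde{\B}(z_\ell)(1)=\prod_{1\le i<j\le\ell}\bigl(1-qz_j/z_i\bigr)^{-1}\,\B(z_1)\cdots\B(z_\ell)(1).
\]
The gap is in the last step. You assert that the coefficients extracted from $\prod_{i<j}(1-qz_j/z_i)^{-1}$ are ``by definition'' the $q$-Kostka polynomials $K_{\mu\lambda}(q)$. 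In this paper (and classically) $K_{\lambda\mu}(q)$ is defined through $Q'_\mu=\sum_\lambda K_{\lambda\mu}(q)s_\lambda$ where $Q'_\mu$ is the basis dual to the Hall--Littlewood $P_\lambda$ under the Hall pairing; the statement that the raising-operator/vertex-operator expression $\prod_{i<j}(1-qR_{ij})^{-1}s_\lambda$ equals that dual-basis element $Q'_\lambda$ is not a definition but is precisely the substance of Jing's theorem (equivalently, of the identity you are trying to prove). So as written the argument is circular: your computation shows that $\widetilde{\B}_{\lambda_1}\cdots\widetilde{\B}_{\lambda_\ell}(1)$ equals the raising-operator expression, but the identification of that expression with the classical $Q'_\lambda$ is deferred to the very references being proved --- as you yourself concede. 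A self-contained proof would have to supply the missing piece, e.g.\ by computing the adjoint of $\widetilde{\B}_m$ and verifying $\langle P_\nu,\widetilde{\B}_{\lambda_1}\cdots\widetilde{\B}_{\lambda_\ell}(1)\rangle=\delta_{\nu\lambda}$, or by deriving the commutation of $\widetilde{\B}_m$ with multiplication by the $P$'s; that is where the real work lies (and it is what Jing and Macdonald actually do). If, on the other hand, the intent is simply to cite the classical identification, then your write-up reduces to the same citation the paper already makes, with the (correct, but then redundant) commutation computation layered on top.
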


\begin{Definition} \label{def:Qpsym}
One may define, for any integer tuple $\alpha \in {\mathbb Z}^m$, a symmetric function 
\[Q'_\alpha := \widetilde{\B}_{\alpha_1} \cdots \widetilde{\B}_{\alpha_m}(1)~.\]
\end{Definition}

\begin{Remark}  The $\widetilde{\B}_m$ operators generalize the $\B_m$ operators of Bernstein \cite{Zel}
and are due to Jing \cite{Jing} who studied them as a `vertex operator' definition
of the algebra of Hall-Littlewood polynomials $Q_\lambda$.
Garsia \cite{G} used a modified version as creation operators for the symmetric
functions $Q'_\lambda$.  Specializations of these operators can be used to
create $P$ and $Q$-Schur functions (see for instance \cite{HH,J2,Mac}). The commutation relations of these
operators make it natural to consider the symmetric functions for all compositions or integer tuples,
but only the $Q'_\lambda$ for $\lambda$ a partition are known to be positive when expanded in
the Schur basis. Generalizations were considered for Macdonald symmetric functions \cite{LV, KN} as well
as a technique for proving the polynomiality of the Macdonald-Kostka coefficients.
  Shimozono and Zabrocki \cite{SZ} considered compositions of these operators
which were indexed by tuples of integers.  These functions were recently studied
further in \cite{HMZ} where it was shown that 
\begin{equation}\label{eq:composum}
\sum_{\alpha \models n} (-q)^{n-\ell(\alpha)} Q'_\alpha = e_n[X]~.
\end{equation}
In that case, Hall-Littlewood functions indexed by compositions were used to
understand the action of the operator $\nabla$ introduced in \cite{BGHT} on a spanning set of the symmetric functions.
\end{Remark}

\subsection{A new Hall-Littlewood basis for $\Nsym$}

We start by building operators $\widetilde{\BB}_m:\Nsym[q] \rightarrow \Nsym[q]$, defined 
for $m \in \ZZ$ by 
\begin{equation}
\label{HLopdef} \widetilde{\BB}_{m} (f) = \sum_{i \geq 0} q^i \BB_{m+i} F_i^\perp (f)~.
\end{equation}

We may now define our new basis of $\Nsym$; they are the result of applying successive $\widetilde{\BB}_m$ operators.

\begin{Definition} \label{def:NCHL}
If $\alpha = [\alpha_1, \alpha_2, \dots, \alpha_m] \in \ZZ^m$, then we define 
\[\QQ'_\alpha = \widetilde{\BB}_{\alpha_1}\widetilde{\BB}_{\alpha_2} \cdots \widetilde{\BB}_{\alpha_m}(1)~.\]
\end{Definition}

\begin{Example}
We will calculate $\QQ'_{4,2}$ and expand this in the $\fS$-basis. 
First note that $\QQ'_{2} = \widetilde{\BB}_2 (1) = \BB_2(1) = \fS_{2}$. 
Next we apply $\widetilde{\BB}_4$. 
\[ \QQ'_{42} = \widetilde{\BB}_4 \fS_{2} 
= \BB_4 \fS_{2} + q \BB_5 F_1^\perp (\fS_{2}) + q^2 \BB_6 F_2^\perp(\fS_{2}) 
= \fS_{42} + q\fS_{51} + q^2\fS_{6}~.\]
\end{Example}

It is worth presenting a further example which seems a convincing reason why the immaculate
basis and the basis $\QQ_\alpha'$
merit further study.  By definition, we have the property that
$\chi( \QQ_\alpha' ) = Q_\alpha'$.  We know that for a partition $\lambda$, the $Q_\lambda'$ form a
basis of $\sym$ and are Schur positive.  It seems (see Conjecture \ref{conj:positivity}) that the elements $\QQ_\lambda'$
are immaculate positive.  What is surprising is that
there are more terms in the expansion of $\QQ_\lambda' \in \Nsym$ than in the expansion of $Q_\lambda' \in \sym$
and so through surjection under $\chi$ many of the positive terms of $\QQ_\lambda'$ cancel.

\begin{Example}  The basis element $\QQ_{1111}' \in \Nsym$ has the following expansion in the immaculate
basis
\begin{align*} \QQ_{1111}' = \fS_{1111} &+ q \fS_{112} + (q+q^2)\fS_{121} + q^3\fS_{13}+ (q+q^2+q^3) \fS_{211}\\ &+
(q^2+q^3+q^4) \fS_{22} + (q^3+q^4+q^5) \fS_{31} + q^6 \fS_4~.
 \end{align*}
The image of $\chi$ on this element is the Hall-Littlewood symmetric function
\[ Q_{1111}' = s_{1111} + (q+q^2+q^3) s_{211} + (q^2+q^4) s_{22} + (q^3+q^4+q^5) s_{31} + q^6 s_4~.\]
For any partition $\lambda$, $\chi(\fS_\lambda) = s_\lambda$.  For the other immaculate basis elements of $\Nsym$, 
$\chi(\fS_{112}) = \chi(\fS_{121}) = 0$
and $\chi(\fS_{13}) = -s_{22}$.
\end{Example}

\subsection{A right-Pieri type rule on the non-commutative Hall-Littlewood basis}

We start this section by developing a Pieri rule for the $\QQ'$-basis of $\Nsym[q]$. 

In the same way that we derived Lemma \ref{lemma:Bexpansion} from Lemma \ref{lemma:Frule}, we can also derive the following result.

\begin{Lemma}\label{lemma:Btilda} For $f \in \Nsym$ and $s, m \geq 0$, 
\[ \widetilde{\BB}_m (f \HH_s) = \sum_{k=0}^s q^k \widetilde{\BB}_{m+k} (f) \HH_{s-k} 
- \sum_{k=0}^{s-1} q^k \widetilde{\BB}_{m+k+1} (f) \HH_{s-k-1}~.
 \]
\end{Lemma}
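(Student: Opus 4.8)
The plan is to unwind the definition of $\widetilde{\BB}_m$ in terms of $\BB_{m+i}$ and $F_i^\perp$, and then apply Lemma~\ref{lemma:Frule} and Lemma~\ref{lemma:Bexpansion} to move the $\HH_s$ past the creation operators one box at a time. First I would write
\[ \widetilde{\BB}_m(f\HH_s) = \sum_{i\ge 0} q^i \BB_{m+i} F_i^\perp(f\HH_s), \]
and then expand $F_i^\perp(f\HH_s)$ using the identity \eqref{eq:hperpexpr} of Lemma~\ref{lemma:Frule}, namely $F_i^\perp(f\HH_s) = \sum_{j=0}^{\min(i,s)} F_{i-j}^\perp(f)\HH_{s-j}$. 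Substituting and reindexing (setting $i' = i-j$) gives
\[ \widetilde{\BB}_m(f\HH_s) = \sum_{j=0}^{s} q^j \sum_{i'\ge 0} q^{i'} \BB_{m+i'+j} F_{i'}^\perp(f)\, \HH_{s-j}. \]
The inner sum is \emph{almost} $\widetilde{\BB}_{m+j}(f)\HH_{s-j}$, except that the Bernstein index is $m+i'+j$ rather than $m+j+i'$ — wait, those agree — so in fact the inner sum is exactly $\widetilde{\BB}_{m+j}(f)$. That would give $\widetilde{\BB}_m(f\HH_s) = \sum_{j=0}^s q^j \widetilde{\BB}_{m+j}(f)\HH_{s-j}$, which is only the first of the two sums in the statement. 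The discrepancy means the bookkeeping in \eqref{eq:hperpexpr} must be handled more carefully: the correct route is to not fully expand $F_i^\perp(f\HH_s)$, but rather to use Lemma~\ref{lemma:Bexpansion} (which reads $\BB_n(f)\HH_s = \BB_{n+1}(f)\HH_{s-1} + \BB_n(f\HH_s)$) iteratively to peel off the $\HH_s$ box by box, keeping track of the telescoping correction terms.

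More precisely, the cleaner approach is to mimic exactly how Lemma~\ref{lemma:Bexpansion} was derived from Lemma~\ref{lemma:Frule}, as the sentence preceding the statement suggests. I would start from the definition $\widetilde{\BB}_m(f\HH_s) = \sum_{i\ge0} q^i \BB_{m+i}(F_i^\perp(f\HH_s))$ and apply Lemma~\ref{lemma:Bexpansion} to each term $\BB_{m+i}(F_i^\perp(f\HH_s))$ — no; rather I would apply Lemma~\ref{lemma:Frule} to $F_i^\perp(f\HH_s)$ \emph{only one step at a time} together with Lemma~\ref{lemma:Bexpansion}, so the telescoping that produced the single negative term in Lemma~\ref{lemma:Bexpansion} now produces, after $s$ iterations, the sum $\sum_{k=0}^{s-1} q^k \widetilde{\BB}_{m+k+1}(f)\HH_{s-k-1}$. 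Concretely: prove the claim by induction on $s$. The base case $s=0$ is trivial. For the inductive step, write $f\HH_s = (f\HH_{s-1})\HH_1$ — no, that changes $f$; instead use that $F_i^\perp(f\HH_s)$ relates to $F_i^\perp(f\HH_{s-1})$ is not clean either. The right inductive handle is: treat $\widetilde{\BB}_m(f)\HH_s$ and use an analogue of Lemma~\ref{lemma:Bexpansion} for $\widetilde{\BB}$. Deriving that analogue,
\[ \widetilde{\BB}_m(f)\HH_s = \widetilde{\BB}_m(f\HH_s) + q\,\widetilde{\BB}_{m+1}(f)\HH_{s-1} - \widetilde{\BB}_{m+1}(f\HH_{s-1}) + \cdots, \]
is where the real content lies; once I have the one-step relation
\[ \widetilde{\BB}_m(f\HH_s) = \widetilde{\BB}_m(f)\HH_s + q\,\widetilde{\BB}_{m+1}(f)\HH_{s-1} - q\,\widetilde{\BB}_{m+1}(f\HH_{s-1}), \]
the stated closed form follows by a routine induction on $s$, unwinding the last term.

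The main obstacle I anticipate is getting the one-step recursion correct: the $F_i^\perp(f\HH_s)$ expansion in \eqref{eq:hperpexpr} has \emph{all} $\min(i,s)+1$ terms, not just two, so unlike the $F_{1^i}^\perp$ case used for Lemma~\ref{lemma:Bexpansion}, a naive single application does not telescope. I expect one must instead use \eqref{eq:hperpexpr} in the form $F_i^\perp(f\HH_s) = F_i^\perp(f)\HH_s + F_{i-1}^\perp(f\HH_{s-1})$ — which follows from \eqref{eq:hperpexpr} by splitting off the $j=0$ term and recognizing the rest as $F_{i-1}^\perp(f\HH_{s-1})$ — and this recursive form is exactly what feeds a clean induction. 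Plugging this into $\sum_i q^i \BB_{m+i}(\cdot)$ and recognizing the two pieces as $\widetilde{\BB}_m(f)\HH_s$ and $q\sum_i q^{i-1}\BB_{m+i}F_{i-1}^\perp(f\HH_{s-1})$, the latter being $q\,\widetilde{\BB}_{m+1}(f\HH_{s-1})$ after reindexing, gives the one-step relation, after which the induction on $s$ closes the proof. I would double-check the powers of $q$ and the index shifts carefully, since an off-by-one there is the most likely source of error.
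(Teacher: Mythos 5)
There is a genuine gap: your pivotal ``one-step relation'' is false, and the error comes from implicitly letting the creation operators commute past right multiplication by $\HH$. In your final derivation you identify $\sum_{i\ge 0} q^i \BB_{m+i}\bigl(F_i^\perp(f)\HH_s\bigr)$ with $\widetilde{\BB}_m(f)\HH_s$, but $\BB_{m+i}$ is applied to the whole product $F_i^\perp(f)\HH_s$, and Lemma \ref{lemma:Bexpansion} says exactly that this costs a correction term: $\BB_{m+i}(g\HH_s)=\BB_{m+i}(g)\HH_s-\BB_{m+i+1}(g)\HH_{s-1}$. Carrying this out correctly, your splitting $F_i^\perp(f\HH_s)=F_i^\perp(f)\HH_s+F_{i-1}^\perp(f\HH_{s-1})$ (which is fine) leads to
\[
\widetilde{\BB}_m(f\HH_s)=\widetilde{\BB}_m(f)\HH_s-\widetilde{\BB}_{m+1}(f)\HH_{s-1}+q\,\widetilde{\BB}_{m+1}(f\HH_{s-1}),
\]
and iterating this recursion does yield the stated formula. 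Your version, $\widetilde{\BB}_m(f\HH_s)=\widetilde{\BB}_m(f)\HH_s+q\,\widetilde{\BB}_{m+1}(f)\HH_{s-1}-q\,\widetilde{\BB}_{m+1}(f\HH_{s-1})$, has the wrong signs and $q$-powers: already at $s=1$ it collapses to $\widetilde{\BB}_m(f\HH_1)=\widetilde{\BB}_m(f)\HH_1$, which contradicts the lemma (e.g.\ $\widetilde{\BB}_2(\HH_1)=\fS_{21}+q\fS_3=\HH_2\HH_1+(q-1)\HH_3\neq\HH_2\HH_1$). Note also that the computation in your last paragraph, even granting your (incorrect) identification, produces no $q\,\widetilde{\BB}_{m+1}(f)\HH_{s-1}$ term at all, so it does not establish the relation you then propose to iterate; the argument is internally inconsistent as written.

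The same misstep is what derailed your first attempt, which you abandoned for the wrong reason. That attempt was essentially the paper's proof: expand $F_i^\perp(f\HH_s)$ fully by \eqref{eq:hperpexpr}, and then apply Lemma \ref{lemma:Bexpansion} to each term $\BB_{m+i}\bigl(F_{i-k}^\perp(f)\HH_{s-k}\bigr)$ before reindexing $j=i-k$. Doing so produces both the positive sum $\sum_{k=0}^s q^k\widetilde{\BB}_{m+k}(f)\HH_{s-k}$ and the negative sum $\sum_{k=0}^{s-1}q^k\widetilde{\BB}_{m+k+1}(f)\HH_{s-k-1}$ (the $k=s$ term dying because $\HH_{-1}=0$), with no induction needed. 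So the fix is not to avoid the full expansion but to remember that $\BB$ does not commute with right multiplication by $\HH_{s-k}$; alternatively, correct the one-step recursion as displayed above and your induction on $s$ goes through.
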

\begin{proof} We explicitly calculate the left hand side and apply Lemma \ref{lemma:Frule} (where $F_j=0$ for $j<0$.
\begin{align*}
\widetilde{\BB}_m (f \HH_s) &=  \sum_{i \geq 0} q^i \BB_{m+i} F_i^\perp (f \HH_s)
= \sum_{i \geq 0} q^i \BB_{m+i}\left(\sum_{k=0}^s F_{i-k}^\perp(f) \HH_{s-k}\right)\\
&= \sum_{k=0}^s \sum_{i \geq 0} q^i \BB_{m+i} \left( (F_{i-k}^\perp(f)) \HH_{s-k} \right)~.
\end{align*}
Now by Lemma \ref{lemma:Bexpansion} this last expression is equal to 
\[= \sum_{k=0}^s \sum_{i \geq 0} q^i \left( \BB_{m+i} F_{i-k}^\perp(f) \HH_{s-k} - \BB_{m+i+1} F_{i-k}^\perp(f) \HH_{s-k-1} \right)~.\]
If we let $j=i-k$ and shift the indices of the sum, we have
\begin{align*}
&= \sum_{k=0}^s q^k \sum_{j \geq 0} q^{j} \left( \BB_{m+k+j} F_{j}^\perp(f) \HH_{s-k} - \BB_{m+k+1+j} F_{j}^\perp(f) \HH_{s-k-1} \right)\\
&= \sum_{k=0}^s q^k \widetilde{\BB}_{m+k} (f) \HH_{s-k} - \sum_{k=0}^{s-1} q^k \widetilde{\BB}_{m+k+1} (f) \HH_{s-k-1}~.
\end{align*}
The second sum in the last equation terminates at $k=s-1$ since for $k=s$, 
$\HH_{s-k-1} = \HH_{-1} = 0$.
\end{proof}

For
 two compositions $\alpha, \beta$, with $\alpha_i \leq \beta_i$ for all $i$, we let $n(\alpha, \beta)$ denote the number of rows of $\alpha$ which are strictly shorter than the same row of $\beta$ (i.e. $n(\alpha,\beta) =| \{ i : \alpha_i < \beta_i \}|$).

\begin{Theorem}\label{thm:HLpieri} The $\QQ_\alpha'$ have a right-Pieri type rule given by:
\[ \QQ_\alpha' \HH_s = \sum_{\alpha \subset_{s} \beta} (1-q)^{n(\alpha, \beta)} \QQ_\beta'.\]
\end{Theorem}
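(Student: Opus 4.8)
The plan is to run the same induction as in the proof of Theorem~\ref{thm:Pieri}, with Lemma~\ref{lemma:Btilda} playing the role that Lemma~\ref{lemma:Bexpansion} played there. The first step is to rewrite Lemma~\ref{lemma:Btilda} into a ``move $\HH_s$ past $\widetilde{\BB}_m$'' form: peeling off the $k=0$ term of the first sum in Lemma~\ref{lemma:Btilda} and reindexing the second sum shows $\widetilde{\BB}_m(f\HH_s) = \widetilde{\BB}_m(f)\HH_s + \sum_{k\ge1}(q^{k}-q^{k-1})\widetilde{\BB}_{m+k}(f)\HH_{s-k}$, and solving for $\widetilde{\BB}_m(f)\HH_s$ gives
\[ \widetilde{\BB}_m(f)\,\HH_s \;=\; \widetilde{\BB}_m(f\HH_s) \;+\; (1-q)\sum_{k=1}^{s} q^{k-1}\,\widetilde{\BB}_{m+k}(f)\,\HH_{s-k}, \]
which at $q=0$ recovers Lemma~\ref{lemma:Bexpansion} exactly.

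Next I would induct on $s+\ell(\alpha)$. The base cases $s=0$ (both sides equal $\QQ'_\alpha$) and $\ell(\alpha)=0$ (here $\QQ'_\emptyset\HH_s=\HH_s=\QQ'_{[s]}$, and $\emptyset\subset_s[s]$ with $n(\emptyset,[s])=0$) are immediate. For the inductive step, write $m=\alpha_1$ and $\overline{\alpha}=[\alpha_2,\dots,\alpha_{\ell(\alpha)}]$, so that $\QQ'_\alpha=\widetilde{\BB}_m(\QQ'_{\overline{\alpha}})$ and, by Definition~\ref{def:NCHL}, $\widetilde{\BB}_{m+k}(\QQ'_{\overline{\alpha}})=\QQ'_{[m+k,\overline{\alpha}]}$ and $\widetilde{\BB}_m(\QQ'_\eta)=\QQ'_{[m,\eta]}$. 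Applying the displayed identity with $f=\QQ'_{\overline{\alpha}}$, then invoking the inductive hypothesis on $\QQ'_{\overline{\alpha}}\HH_s$ (statistic $s+\ell(\alpha)-1$) and on each $\QQ'_{[m+k,\overline{\alpha}]}\HH_{s-k}$ for $1\le k\le s$ (statistic $s-k+\ell(\alpha)<s+\ell(\alpha)$), and pushing $\widetilde{\BB}_m$ through the first sum, I get
\begin{align*}
\QQ'_\alpha\HH_s &= \sum_{\overline{\alpha}\subset_s\eta}(1-q)^{n(\overline{\alpha},\eta)}\QQ'_{[m,\eta]}\\
&\quad + (1-q)\sum_{k=1}^{s} q^{k-1}\sum_{[m+k,\overline{\alpha}]\subset_{s-k}\gamma}(1-q)^{n([m+k,\overline{\alpha}],\gamma)}\QQ'_{\gamma}.
\end{align*}

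It then remains to check that the right-hand side above equals $\sum_{\alpha\subset_s\beta}(1-q)^{n(\alpha,\beta)}\QQ'_\beta$, which I would do by splitting the compositions $\beta$ with $\alpha\subset_s\beta$ according to whether $\beta_1=\alpha_1$ or $\beta_1>\alpha_1$. If $\beta_1=\alpha_1=m$, writing $\beta=[m,\eta]$ one has $\alpha\subset_s\beta\iff\overline{\alpha}\subset_s\eta$ and $n(\alpha,[m,\eta])=n(\overline{\alpha},\eta)$ (the unchanged first row is not short, and a possibly newly created last row is, by convention, not counted in $n$), so these $\beta$ account for exactly the first sum. If $\beta_1=m+j$ with $j\ge1$, then $[m+k,\overline{\alpha}]\subset_{s-k}\beta$ holds precisely for $1\le k\le j$, and $n([m+k,\overline{\alpha}],\beta)=[k<j]+\bigl(n(\alpha,\beta)-1\bigr)$; hence the coefficient of $\QQ'_\beta$ coming from the second sum is
\[ (1-q)\sum_{k=1}^{j} q^{k-1}(1-q)^{[k<j]+n(\alpha,\beta)-1} = (1-q)^{n(\alpha,\beta)}\Bigl((1-q)\tfrac{1-q^{j-1}}{1-q}+q^{j-1}\Bigr) = (1-q)^{n(\alpha,\beta)}, \]
as needed, where $[\,\cdot\,]$ is the Iverson bracket.

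The routine portions (the reindexing in the first step, the degree bookkeeping for the induction, and the $\subset_s$-containment verifications) are harmless. The one point requiring genuine care — and the main obstacle I anticipate — is the last count: for a composition $\beta$ with $\beta_1-\alpha_1=j\ge1$ one must correctly see that it arises from exactly the values $k=1,\dots,j$, keep the $[k<j]$ term straight (it is the contribution of the first row to $n$, which vanishes only in the top case $k=j$), and recognize that the resulting finite geometric sum telescopes to $1$ so that the power of $(1-q)$ is exactly $n(\alpha,\beta)$. Everything else parallels the proof of Theorem~\ref{thm:Pieri}.
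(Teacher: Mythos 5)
Your proposal is correct and follows essentially the same route as the paper: it rests on Lemma~\ref{lemma:Btilda}, an induction modelled on Theorem~\ref{thm:Pieri} (your single induction on $s+\ell(\alpha)$ versus the paper's double induction on $s$ and then $\ell(\alpha)$ is only a repackaging), and the same final coefficient extraction, where your telescoping geometric sum is just the paper's two separate sums combined before, rather than after, applying the inductive hypothesis. The convention you flag for $n(\alpha,\beta)$ (a newly created row is not counted) is indeed the one the paper uses, so the count $(1-q)^{n(\alpha,\beta)}$ comes out exactly as in the paper's proof.
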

\begin{proof}
    We will prove this using a double induction.
First we use induction on $s$. The case $s = 0$ is trivially true as $\HH_0 = 1$. 
We now assume it true for all $j < s$ and prove it for $s$. For a given $s$ we make a second inductive 
hypothesis on the length of the composition $\alpha$. Again, for compositions of length $0$ it is 
trivially true so we assume it to hold for length $\ell \leq \ell(\alpha)$ and prove it for 
$\ell(\alpha)+1$. We apply Lemma \ref{lemma:Btilda} to yield:
\begin{align}
\QQ_{[m, \alpha]}'\HH_s &= [\widetilde{\BB}_m (\QQ_\alpha')] \HH_s\nonumber\\ 
&= \widetilde{\BB}_m(\QQ_\alpha' \HH_s) - \sum_{k = 1}^s q^k \widetilde{\BB}_{m+k}(\QQ_\alpha') \HH_{s-k} + \sum_{k=0}^{s-1} q^k \widetilde{\BB}_{m+k+1}(\QQ_\alpha') \HH_{s-k-1}~.\label{kitten}
\end{align}
When we apply the inductive hypotheses, (\ref{kitten}) becomes:
\begin{equation}\label{catfish} 
=\sum_{\alpha \subset_{s} \beta} (1-q)^{n(\alpha, \beta)} \QQ_{[m, \beta]}' 
 - \sum_{k=1}^s \sum_{[m+k, \alpha] \subset_{s-k} \gamma} q^k (1-q)^{n([m+k, \alpha], \gamma)} \QQ_{\gamma}'
 \end{equation}
 \begin{equation*}
 + \sum_{k=0}^{s-1} \sum_{[m+k+1, \alpha] \subset_{s-k-1} \delta} q^k (1-q)^{n([m+k+1, \alpha], \delta)} \QQ_\delta'.
 \end{equation*}

We need to show that the coefficient of a term $\QQ_\eta'$ is $(1-q)^{n([m,\alpha],\eta)}$. If $\eta = [m, \zeta]$ then the only coefficient in  (\ref{catfish}) will come from the first sum; forcing $\beta = \zeta$ yields $(1-q)^{n(\alpha, \zeta)} = (1-q)^{n([m,\alpha], \eta)}$.

If $\eta = [m+a, \zeta]$ with $s \geq a \geq 1$ then the contribution to $Q_\eta'$ will come from the second and third summands in (\ref{catfish}). The coefficient coming from the second summand is: \[ - \sum_{k=1}^{a} q^k (1-q)^{n( [m+k,\alpha], \eta)} = - \sum_{k=1}^{a-1} q^k (1-q)^{1+n(\alpha, \zeta)} - q^a (1-q)^{n(\alpha, \zeta)} \] \[ = -(1-q)^{n(\alpha, \zeta)} \left( \left( \sum_{k=1}^{a-1} q^k (1-q)\right) + q^a \right)= -q (1-q)^{n(\alpha, \zeta)}.\]
The coefficient coming from the third summand is: 
\[ \sum_{k=0}^{a-1} q^k (1-q)^{n( [m+k+1,\alpha], \eta)} = \sum_{k=0}^{a-2} q^k (1-q)^{1+n(\alpha, \zeta)} - q^{a-1} (1-q)^{n(\alpha, \zeta)} \] \[ = (1-q)^{n(\alpha, \zeta)} \left( \left( \sum_{k=0}^{a-2} q^k (1-q)\right) + q^{a-1} \right)= (1-q)^{n(\alpha, \zeta)}.\]
Combining these coefficients gives $(1-q)^{n(\alpha, \zeta) + 1} = (1-q)^{n([m,\alpha], \eta)}$.
\end{proof}

\begin{Example}
Similar to Example \ref{pigeon}, we compute,
\[
\QQ'_{23} \HH_3 = 
 \QQ'_{233} + 
(1-q) \QQ'_{242} +
(1-q) \QQ'_{251} +
(1-q) \QQ'_{26} +
(1-q) \QQ'_{332} \] \[ +
(1-q)^2 \QQ'_{341} +
(1-q)^2 \QQ'_{35}+
(1-q) \QQ'_{431}+
(1-q)^2 \QQ'_{44}+
(1-q) \QQ'_{53}.
\]
\end{Example}

As an immediate corollary, repeated application of our Pieri rule gives an explicit formula for the expansion of the $\HH$ basis.
For an immaculate tableau $T$, we let $n(T) = \sum_{j = 0}^{\ell(sh(T))} d_j(T)-1,$ where $d_j(T)$ is the number of distinct entries in row $j$. Equivalently, $n(T)$ counts the number of times that a distinct letter $l$ exists in a row which does not start with $l$.

\begin{Theorem}\label{HtoHL} For a composition $\beta$,
\[ \HH_\beta = \sum_T (1-q)^{n(T)} \QQ_{shape(T)}',\] the sum over all immaculate tableaux of content $\beta$.
\end{Theorem}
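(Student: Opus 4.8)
The plan is to mimic the proof of Proposition \ref{cor:Hexpansion}: induct on $\ell(\beta)$, peeling off the last part and applying the right-Pieri rule for the $\QQ'$-basis (Theorem \ref{thm:HLpieri}). For the base case $\beta = [\beta_1]$, note that $\widetilde{\BB}_{\beta_1}(1) = \BB_{\beta_1}(1) = \fS_{\beta_1} = \HH_{\beta_1}$, since $F_i^\perp(1) = 0$ for $i > 0$; there is a single immaculate tableau of content $[\beta_1]$, the one-row tableau filled with $\beta_1$ copies of $1$, it has shape $[\beta_1]$ and $n(T) = 0$, so both sides agree. (The empty case $\beta = [\,]$ is also immediate.)

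For the inductive step, write $\beta = [\beta', \beta_m]$ with $\beta' = [\beta_1, \dots, \beta_{m-1}]$, so that $\HH_\beta = \HH_{\beta'} \HH_{\beta_m}$. By the inductive hypothesis $\HH_{\beta'} = \sum_{T'} (1-q)^{n(T')} \QQ_{shape(T')}'$, the sum over immaculate tableaux $T'$ of content $\beta'$, and then Theorem \ref{thm:HLpieri} yields
\[ \HH_\beta = \sum_{T'} (1-q)^{n(T')} \sum_{shape(T') \subset_{\beta_m} \delta} (1-q)^{n(shape(T'), \delta)} \QQ_\delta'. \]
It remains to identify the pairs $(T', \delta)$ appearing here -- with $T'$ an immaculate tableau of content $\beta'$ and $shape(T') \subset_{\beta_m} \delta$ -- with immaculate tableaux $T$ of content $\beta$, in such a way that $shape(T) = \delta$ and $n(T) = n(T') + n(shape(T'), \delta)$.

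The bijection sends $T$ to the pair $(T', \delta)$ where $\delta = shape(T)$ and $T'$ is obtained by deleting every box of $T$ labelled $m$. Since the content of $T'$ uses only the letters $1, \dots, m-1$, the letter $m$ occupies a suffix of each row of $T$, and, because the first column of $T$ is strictly increasing and $m$ is its maximal entry, at most one row of $T$ can consist entirely of $m$'s (and such a row must be the bottom one); hence $T'$ is a genuine immaculate tableau and the three defining conditions of $shape(T') \subset_{\beta_m} \delta$ hold, the length bound $\ell(\delta) \le \ell(shape(T')) + 1$ being exactly the "at most one all-$m$ row" statement. Conversely, given $(T',\delta)$ with $shape(T') \subset_{\beta_m} \delta$, one fills the cells of $\delta$ not in $shape(T')$ with $m$'s; these sit at the right ends of the rows of $shape(T')$ and, if $\ell(\delta) = \ell(shape(T'))+1$, in a new bottom row, so all immaculate-tableau conditions are preserved (here one uses that $m$ exceeds every entry of $T'$). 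This is clearly inverse to the deletion map. For the weight, passing from $T'$ to $T$, a row of $shape(T')$ that receives at least one box labelled $m$ gains exactly one new distinct letter (namely $m$), raising its contribution $d_j(T)-1$ by one, while a possible new all-$m$ bottom row has $d_j(T) = 1$ and contributes $0$; matched against the convention that $n(shape(T'),\delta)$ counts only the original rows of $shape(T')$ that strictly grow, this gives $n(T) = n(T') + n(shape(T'),\delta)$. Substituting and re-indexing the double sum over all such $T$ produces $\HH_\beta = \sum_T (1-q)^{n(T)} \QQ_{shape(T)}'$, completing the induction.

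The main obstacle is this last step: checking that the Pieri summation over $\subset_{\beta_m}$ matches exactly the addition of a fresh horizontal strip of $m$'s to an immaculate tableau, and that the powers of $(1-q)$ bookkeep correctly -- in particular reconciling the definition of $n(\alpha,\beta)$, which ignores a newly created row, with the definition of $n(T)$, for which an all-$m$ row is "free." Everything else is a routine induction paralleling Proposition \ref{cor:Hexpansion}.
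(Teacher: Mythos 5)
Your proof is correct and follows the same route as the paper, whose entire argument is ``repeated application of Theorem \ref{thm:HLpieri}'': you simply make explicit the induction on $\ell(\beta)$, the bijection between pairs $(T',\delta)$ and immaculate tableaux of content $\beta$, and the weight identity $n(T)=n(T')+n(\shape(T'),\delta)$, all of which the paper leaves implicit. In particular your reading of $n(\alpha,\beta)$ (a newly created row contributes no factor of $(1-q)$) matches the paper's convention, as the example following Theorem \ref{thm:HLpieri} confirms.
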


\begin{proof}
This follows from repeated application of Theorem \ref{thm:HLpieri}.
\end{proof}

\begin{Example}
The five immaculate tableaux of shape $[4,2,3]$ and content $[3,1,2,3]$ from Example \ref{ex:immaculatetableau} have $n(T) = 2, 3, 3, 3, 1$ respectively: 
\[ \tikztableausmall{{1,1, 1, 3},{2, 3}, {4,4,4}} 
\tikztableausmall{{1,1, 1, 3},{2, 4}, {3,4,4}} 
\tikztableausmall{{1,1, 1, 4},{2,3}, {3,4,4}} 
\tikztableausmall{{1,1, 1, 4},{2, 4}, {3,3,4}} 
\tikztableausmall{{1,1, 1, 2},{3, 3}, {4,4,4}} 
\]
Therefore, \[\HH_{3123} = \cdots + \left((1-q) + (1-q)^2 + 3(1-q)^3\right) \QQ'_{423} + \cdots.\]
\end{Example}

\begin{Theorem} For generic $q$,
 the $\QQ_\alpha'$ form a basis of $\Nsym[q]$. When $\alpha$ is a partition, $\chi(\QQ_\alpha') = Q_\alpha'$. Moreover, the $\QQ_\alpha'$ interpolate between the complete homogeneous basis (at specialization $q = 1$) and the immaculate basis (at specialization $q = 0$).
\end{Theorem}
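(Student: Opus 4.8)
The plan is to obtain all three assertions from results already in hand.

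\textbf{The basis statement.} I would read it off Theorem~\ref{HtoHL}: collecting the immaculate tableaux of content $\beta$ by shape gives
$\HH_\beta = \sum_\alpha \Bigl(\sum_{\shape(T)=\alpha,\ \content(T)=\beta} (1-q)^{n(T)}\Bigr)\QQ'_\alpha$.
By Proposition~\ref{prop:kostkamatrix} the inner coefficient is $0$ unless $\alpha \geq_\ell \beta$, and when $\alpha=\beta$ there is a single immaculate tableau of that shape and content, namely the one with row $i$ filled by $i$'s, and it has $n(T)=0$, so the coefficient is $1$. Hence the matrix expressing the $\HH_\beta$ ($\beta\models n$) in the $\QQ'_\alpha$ ($\alpha\models n$) is triangular for a linear extension of $\geq_\ell$ with $1$'s on the diagonal; it is therefore invertible over $\mathbb{Q}[q]$, and since $\{\QQ'_\alpha : \alpha\models n\}$ has the right size it is a basis (in fact for every specialization of $q$, in particular generically).

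\textbf{The two endpoints.} These fall straight out of the definition of $\widetilde{\BB}_m=\sum_{i\ge0}q^i\BB_{m+i}F_i^\perp$. Setting $q=0$ kills every $i\ge1$ term and leaves $\widetilde{\BB}_m|_{q=0}=\BB_m F_0^\perp=\BB_m$, so $\QQ'_\alpha|_{q=0}=\BB_{\alpha_1}\cdots\BB_{\alpha_m}(1)=\fS_\alpha$, the immaculate basis. Setting $q=1$ and invoking the identity $\HH_m=\sum_{i\ge0}\BB_{m+i}F_i^\perp$ recorded in the Remark after Theorem~\ref{thm:Pieri}, we get that $\widetilde{\BB}_m|_{q=1}$ is left multiplication by $\HH_m$; thus $\QQ'_\alpha|_{q=1}=\HH_{\alpha_1}\HH_{\alpha_2}\cdots\HH_{\alpha_m}=\HH_\alpha$, the complete homogeneous basis. (One could also argue this from Theorem~\ref{HtoHL}, since at $q=1$ only the $n(T)=0$ summand, which forces $\shape(T)=\content(T)=\beta$, survives.)

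\textbf{The projection.} Extend $\chi$ $q$-linearly to $\Nsym[q]\to\sym[q]$. I would prove the intertwining $\chi\circ\widetilde{\BB}_m=\widetilde{\B}_m\circ\chi$. Since $\chi$ is an algebra morphism with $\chi(\HH_i)=h_i$, and the subalgebra inclusion $\sym\hookrightarrow\Qsym$ is the algebra map adjoint to $\chi$ that sends $e_i\mapsto F_{1^i}$ and $h_i\mapsto F_i$, the adjointness of $\perp$-operators to multiplication gives $\chi\circ F_{1^i}^\perp=e_i^\perp\circ\chi$ and $\chi\circ F_i^\perp=h_i^\perp\circ\chi$. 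Substituting the first into $\BB_m=\sum_{i\ge0}(-1)^i\HH_{m+i}F_{1^i}^\perp$ yields $\chi\circ\BB_m=\B_m\circ\chi$, and then substituting both into $\widetilde{\BB}_m$ yields $\chi\circ\widetilde{\BB}_m=\widetilde{\B}_m\circ\chi$. Applying this $\ell(\alpha)$ times to $1$ gives $\chi(\QQ'_\alpha)=\widetilde{\B}_{\alpha_1}\cdots\widetilde{\B}_{\alpha_m}(1)=Q'_\alpha$ in the sense of Definition~\ref{def:Qpsym}; when $\alpha=\lambda$ is a partition, repeated use of Jing's Theorem~\ref{jing} (valid since $\lambda$ is weakly decreasing) identifies this with the genuine Hall--Littlewood function $Q'_\lambda$.

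The only point that needs care — the \emph{main obstacle}, such as it is — is the adjointness lemma $\chi\circ F_i^\perp=h_i^\perp\circ\chi$ (and its $F_{1^i}$ counterpart): one must verify that $\chi:\Nsym\to\sym$ really is the map adjoint to the inclusion $\sym\hookrightarrow\Qsym$, i.e. that $\langle \chi(H),g\rangle_\sym=\langle H,g\rangle$ for all $H\in\Nsym$ and $g\in\sym\subseteq\Qsym$, which is routine (check it on $H=\HH_\alpha$, $g=m_\lambda$) but is the one place the argument touches the Hopf structure rather than simply quoting an earlier theorem. The rest is bookkeeping on top of Theorem~\ref{HtoHL}, Theorem~\ref{jing}, and the operator definitions.
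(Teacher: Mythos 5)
Your proposal is correct, and its overall skeleton (triangularity for the basis claim, the intertwining $\chi\circ\widetilde{\BB}_m=\widetilde{\B}_m\circ\chi$ plus Jing's theorem for the projection, and specialization of the operators for the two endpoints) matches the paper's proof; the differences are in which supporting lemmas carry the weight. For the basis statement the paper asserts, without detail, that the $\QQ'_\alpha$ expand upper unitriangularly in the immaculate basis, whereas you invert the direction and deduce unitriangularity of the $\HH$-to-$\QQ'$ matrix from Theorem \ref{HtoHL} together with Proposition \ref{prop:kostkamatrix}; this is a legitimate and arguably more self-contained route, since it only quotes results already proved. For the projection the paper says $\chi\circ\BB_m=\B_m\circ\chi$ is ``easy to check on the immaculate basis'' (i.e.\ via $\chi(\fS_\alpha)=s_\alpha$ and Bernstein's theorem), while you derive it from the adjointness of $\chi$ with the inclusion $\sym\hookrightarrow\Qsym$, making explicit the compatibilities $\chi\circ F_{1^i}^\perp=e_i^\perp\circ\chi$ and $\chi\circ F_i^\perp=h_i^\perp\circ\chi$ that the paper glosses over but genuinely needs for the tilde operators; your version is more careful at exactly the point where the paper is terse. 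For the $q=1$ endpoint the paper specializes Theorem \ref{HtoHL}, using that the unique immaculate tableau with $n(T)=0$ and content $\beta$ has shape $\beta$; your primary argument instead specializes the operator identity $\HH_m=\sum_{i\ge 0}\BB_{m+i}F_i^\perp$ from the Remark after Theorem \ref{thm:Pieri} (stated there without proof, though it follows by the same cancellation $\sum_{i}(-1)^iF_{n-i}F_{1^i}=0$ used in Proposition \ref{prop:btobtilda}), and you note the paper's route as an alternative. The $q=0$ endpoint is identical in both. All steps check out; no gaps.
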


\begin{proof}
It is easy to see that the $\QQ_\alpha'$ expand into the immaculate basis upper unitriangularly with respect to lexicographic ordering. The second statement follows from Theorem \ref{jing} and the fact that $\chi \circ \widetilde{\BB}_m = \widetilde{\B}_m \circ \chi$, which follows from the fact that $\chi \circ \BB_m  = \B_m \circ \chi$, which is easy to check on the immaculate basis. The only immaculate tableau $T$ which has $n(T) = 0$ and content $\beta$ is the tableau with $\beta_i$ many $i$'s in row $i$.
 Substituting $q=1$ in Theorem \ref{HtoHL} then proves that $\QQ'_{\beta}$ is $\HH_\beta$. Finally, substituting $q=0$ into the definition of $\widetilde{\BB}_m$ shows that $\widetilde{\BB}_m = \BB_m$ in this case.
\end{proof}

Equation \ref{HLopdef} defines $\widetilde{\BB}_m$ in terms of the $\BB_n$ operators.  We can also give a similar expression
for the $\BB_n$ operators in terms of $\widetilde{\BB}_m$.

\begin{Proposition}\label{prop:btobtilda}  For $m \in \ZZ$,
\begin{equation*}
\BB_m = \sum_{i \geq 0} (-q)^i \widetilde{\BB}_{m+i} F_{1^i}^\perp~.
\end{equation*}
\end{Proposition}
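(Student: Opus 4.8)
The plan is to argue with generating functions, in the style of the proof of Theorem~\ref{thm:JT}, by mechanically inverting the definition of $\widetilde{\BB}_m$. Throughout I use the two-sided formal series $\BB(z) = \sum_{r\in\ZZ} z^r\BB_r$; it acts in a well-defined way on each graded component of $\Nsym[q]$, since $F_i^\perp$ annihilates everything of degree below $i$, so only finitely many terms contribute to any given output.

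First I would record the generating-function form of \eqref{HLopdef}. Setting $\widetilde{\BB}(z) := \sum_{m\in\ZZ} z^m\widetilde{\BB}_m$, substituting the definition of $\widetilde{\BB}_m$, and reindexing the double sum by $d = m+i$, one obtains
\[
\widetilde{\BB}(z) \;=\; \sum_{d\in\ZZ}\sum_{i\geq0} z^{d-i}q^i\,\BB_d\, F_i^\perp \;=\; \BB(z)\,\HHH^\perp_{q/z},
\]
where $\HHH^\perp_{w} = \sum_{i\geq0} w^i F_i^\perp$ and $\EEE^\perp_{w} = \sum_{i\geq0} w^i F_{1^i}^\perp$ are the operators defined at the end of Section~\ref{sec:nsymqsymcalc}.

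The one point that needs a genuine (if short) argument is the operator identity $\HHH^\perp_{w}\,\EEE^\perp_{-w} = \id$. From the defining adjunction $\langle H, FGK\rangle = \langle F^\perp H, GK\rangle = \langle G^\perp F^\perp H, K\rangle$ and non-degeneracy of the pairing, the map $F\mapsto F^\perp$ is linear and satisfies $(FG)^\perp = G^\perp F^\perp$; since $F_i = h_i$ and $F_{1^i} = e_i$ inside $\Qsym$ (as in the proof of Lemma~\ref{lemma:Frule}), this gives
\[
\HHH^\perp_{w}\,\EEE^\perp_{-w} \;=\; \sum_{i,j\geq0} w^i(-w)^j\,(F_{1^j}F_i)^\perp \;=\; \Bigl(\,\sum_{n\geq0} w^n\sum_{k=0}^{n}(-1)^k e_k h_{n-k}\Bigr)^{\perp} \;=\; 1^\perp \;=\; \id,
\]
where the inner sum collapses by the classical relation $\sum_{k=0}^{n}(-1)^k e_k h_{n-k} = \delta_{n,0}$. (Alternatively one can use the formulas $\EEE^\perp_{w}\OM_Y = \OM_Y\prod_{y\in Y}(1+wy)$ and $\HHH^\perp_{w}\OM_Y = \OM_Y/\prod_{y\in Y}(1-wy)$ from Section~\ref{sec:nsymqsymcalc} to see that $\HHH^\perp_{w}\EEE^\perp_{-w}$ fixes every $\OM_Y$, hence every $\HH_\alpha$, hence is the identity.)

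Finally I would combine the two displays with $w = q/z$:
\[
\widetilde{\BB}(z)\,\EEE^\perp_{-q/z} \;=\; \BB(z)\,\HHH^\perp_{q/z}\,\EEE^\perp_{-q/z} \;=\; \BB(z).
\]
Expanding the left-hand side as $\sum_{d\in\ZZ}\sum_{i\geq0} (-q)^i z^{d-i}\,\widetilde{\BB}_d\, F_{1^i}^\perp$ and extracting the coefficient of $z^m$ (which forces $d = m+i$) produces $\sum_{i\geq0}(-q)^i\widetilde{\BB}_{m+i}F_{1^i}^\perp$, while the coefficient of $z^m$ on the right is $\BB_m$, giving the asserted identity. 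I expect the only real obstacle to be the careful bookkeeping with the two-sided series in $z$ — handled exactly as in the earlier proofs — together with the identity $\HHH^\perp_w\EEE^\perp_{-w}=\id$, which is simply the relation $H(t)E(-t)=1$ carried through the $\perp$ operation.
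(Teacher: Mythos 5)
Your argument is correct and is, at its core, the same as the paper's: the paper substitutes the definition of $\widetilde{\BB}_{m+i}$ into the right-hand side, regroups the double sum along $i+j=n$, and collapses it with the identity $\sum_{i=0}^n(-1)^iF_{n-i}F_{1^i}=0$ --- which is exactly the computation your generating-function identity $\widetilde{\BB}(z)\,\EEE^\perp_{-q/z}=\BB(z)\,\HHH^\perp_{q/z}\,\EEE^\perp_{-q/z}=\BB(z)$ encodes coefficient by coefficient. The series packaging, together with your explicit verification that $\HHH^\perp_w\EEE^\perp_{-w}=\id$, is sound but introduces no ingredient beyond the paper's short direct calculation.
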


\begin{proof}
Our calculations below will use the identity that 
$\sum_{i=0}^n (-1)^i F_{n-i} F_{1^i} = 0$ for $n>0$ (see for instance Stanley 
\cite{Sta}, where $F_{n-i} = h_{n-i}$ and $F_{1^i} = e_i$).
We begin with the left hand side of the expression and then group terms together with $i+j=n$
\begin{align*}
\sum_{i \geq 0} (-q)^i \widetilde{\BB}_{m+i} F_{1^i}^\perp 
&=\sum_{i,j \geq 0} (-q)^i q^j \BB_{m+i+j} F_{j}^\perp F_{1^i}^\perp\\
&=\sum_{n \geq 0} \sum_{i=0}^n (-1)^{i} q^{n} \BB_{m+n} F_{n-i}^\perp F_{1^i}^\perp\\
&=\BB_{m}~.\qedhere
\end{align*}
\end{proof}

This formula potentially gives us a recursive means of computing an element of the immaculate basis in
terms of the $\QQ'$-basis.  Although it isn't obvious how this should be done for all compositions, 
for certain compositions there are remarkably simple expressions.
In particular,
the analogue of Equation \eqref{eq:composum} also holds for our lifted Hall-Littlewood 
functions and they are proven here using roughly the analogous proof to that provided in
\cite{HMZ}.

\begin{Proposition} For $n\geq1$,
\[\fS_{1^n} = \sum_{\alpha \models n} (-q)^{n-\ell(\alpha)} \QQ'_\alpha.\]
More generally, for $1 \leq k \leq n$,
\[\fS_{k,1^{n-k}}= \sum_{\substack{\alpha \models n\\\alpha_1\geq k}} 
(-q)^{n-k+1-\ell(\alpha)} \QQ'_\alpha~.\]
\end{Proposition}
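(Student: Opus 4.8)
The plan is to deduce both identities from the conversion formula $\BB_m = \sum_{i \geq 0} (-q)^i \widetilde{\BB}_{m+i} F_{1^i}^\perp$ of Proposition \ref{prop:btobtilda}, combined with three elementary facts: that $\fS_{k,1^{n-k}} = \BB_k\,\fS_{1^{n-k}}$ by Definition \ref{def:immaculate}; that $F_{1^i}^\perp \fS_{1^{j}} = \fS_{1^{j-i}}$ for $0 \le i \le j$ and $F_{1^i}^\perp \fS_{1^j} = 0$ for $i > j$, from Corollary \ref{cor:equaltoEn} together with a degree count; and that $\widetilde{\BB}_m \QQ'_\beta = \QQ'_{[m,\beta]}$ by Definition \ref{def:NCHL}. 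Although the first identity is literally the $k=1$ instance of the second, I would first prove the first identity on its own by induction on $n$, because the general case then drops out of one more application of the same expansion with the first identity substituted in.

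For the inductive step of $\fS_{1^n} = \sum_{\alpha \models n}(-q)^{n-\ell(\alpha)}\QQ'_\alpha$ (base case $n=1$ being immediate, with the convention $\QQ'_{\emptyset}=1$), I would write $\fS_{1^n} = \BB_1 \fS_{1^{n-1}}$ and expand $\BB_1$ via Proposition \ref{prop:btobtilda}. Since $F_{1^i}^\perp\fS_{1^{n-1}} = \fS_{1^{n-1-i}}$ for $0 \le i \le n-1$ and vanishes for larger $i$, this gives $\fS_{1^n} = \sum_{i=0}^{n-1}(-q)^i \widetilde{\BB}_{1+i}\fS_{1^{n-1-i}}$. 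Applying the inductive hypothesis to each $\fS_{1^{n-1-i}}$ and then $\widetilde{\BB}_{1+i}\QQ'_\beta = \QQ'_{[1+i,\beta]}$, the powers of $-q$ combine as $i + (n-1-i-\ell(\beta)) = n-1-\ell(\beta) = n - \ell([1+i,\beta])$. As $(i,\beta)$ ranges over pairs with $0 \le i \le n-1$ and $\beta \models n-1-i$, the composition $[1+i,\beta]$ runs bijectively over all compositions of $n$ (one recovers $i = \alpha_1 - 1$ and $\beta = [\alpha_2,\dots,\alpha_m]$), which gives the claimed formula.

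For the general identity I would run the same expansion once: $\fS_{k,1^{n-k}} = \BB_k \fS_{1^{n-k}} = \sum_{i=0}^{n-k}(-q)^i\widetilde{\BB}_{k+i}\fS_{1^{n-k-i}}$, now substituting the already-proved first identity for $\fS_{1^{n-k-i}}$ and using $\widetilde{\BB}_{k+i}\QQ'_\beta = \QQ'_{[k+i,\beta]}$. The exponent of $-q$ collapses to $i + (n-k-i-\ell(\beta)) = n-k-\ell(\beta) = n-k+1-\ell([k+i,\beta])$, and as $(i,\beta)$ ranges over $0 \le i \le n-k$, $\beta \models n-k-i$, the composition $\alpha = [k+i,\beta]$ runs bijectively over all $\alpha \models n$ with $\alpha_1 \ge k$ (recover $i = \alpha_1 - k$), yielding $\fS_{k,1^{n-k}} = \sum_{\alpha\models n,\ \alpha_1 \ge k}(-q)^{n-k+1-\ell(\alpha)}\QQ'_\alpha$.

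I do not expect a genuine obstacle here; the argument is essentially bookkeeping. The only points that need care are verifying that the powers of $-q$ telescope exactly as stated, confirming that the two re-indexings are honest bijections onto the claimed index sets (in particular the length-one case $\alpha=[n]$, corresponding to $\beta=\emptyset$), and double-checking the boundary behaviour $F_{1^i}^\perp\fS_{1^j}=0$ for $i>j$ so that the upper summation limits come out right.
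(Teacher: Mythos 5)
Your proposal is correct and follows essentially the same route as the paper's proof: both expand $\BB_1$ (resp.\ $\BB_k$) via Proposition \ref{prop:btobtilda}, use Corollary \ref{cor:equaltoEn} to evaluate $F_{1^i}^\perp\fS_{1^{j}}$, apply the inductive hypothesis (resp.\ the already-proved first identity), and reindex over $[k+i,\beta]$. Your extra care with the base case convention and the bijectivity of the reindexing is fine but does not change the argument.
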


\begin{proof} For the first statement we proceed by induction on $n$.
 The base case is that $\fS_n = \HH_n=\QQ'_n$, which has already been established.
 \[ \fS_{1^n} = \BB_1 \fS_{1^{n-1}} = \sum_{i \geq 0} (-q)^i \widetilde{\BB}_{1+i} F_{1^i}^\perp \fS_{1^{n-1}} = \sum_{i \geq 0} (-q)^i \widetilde{\BB}_{1+i} \fS_{1^{n-i-1}}, 
 \]
 since $F_{1^i}^\perp \fS_{1^{n-1}} = \fS_{1^{n-1-i}}$ by Corollary \ref{cor:equaltoEn}. Next we apply the inductive hypothesis:
 \[ 
\sum_{i \geq 0} (-q)^i \widetilde{\BB}_{1+i} \fS_{1^{n-i-1}}
= \sum_{i\geq 0} \sum_{\beta \models n-i-1} (-q)^i (-q)^{n-i-1 - \ell(\beta)} \widetilde{\BB}_{1+i}\QQ'_\beta = \sum_{\alpha \models n} (-q)^{n-\ell(\alpha)} \QQ'_\alpha.\]
 
The second expression follows by applying $\BB_k$ to the expression just derived
and again applying Corollary \ref{cor:equaltoEn}.
 \[ \fS_{k, 1^{n-k}} = \BB_k \fS_{1^{n-k}} = \sum_{i \geq 0} 
 (-q)^i \widetilde{\BB}_{k+i} F_{1^i}^\perp \fS_{1^{n-k}} = 
 \sum_{i \geq 0} (-q)^i \widetilde{\BB}_{k+i} \fS_{1^{n-k-i}} 
 \]
\[ = \sum_{i\geq 0} \sum_{\beta \models n-k-i} 
(-q)^i (-q)^{n-k-i - \ell(\beta)} \widetilde{\BB}_{k+i}\QQ'_\beta 
= \sum_{\substack{\alpha \models n\\\alpha_1\geq k}} 
(-q)^{n-k+1-\ell(\alpha)} \QQ'_\alpha~.\qedhere\]
 
\end{proof}

We let $\{ \mathcal{P}_\alpha\}$ denote the basis of $\Qsym$ which is dual to the $\{ \QQ_\alpha'\}$ basis of $\Nsym$. Then by duality, the $\mathcal{P}_\alpha$ are monomial positive, in the variable $(1-q)$.

\begin{Theorem} For a composition $\alpha$, 
\[\mathcal{P}_\alpha = \sum_T (1-q)^{n(T)} M_{\content(T)} ,\] the sum over all immaculate tableaux of shape $\alpha$. Moreover, $\mathcal{P}_\alpha$ interpolates between the monomial basis of $\Qsym$ (at $q = 1$) and the dual immaculate basis (at $q=0$).
\end{Theorem}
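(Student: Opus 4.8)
The plan is to run a duality argument entirely parallel to the one used to pass from Theorem~\ref{cor:Hexpansion} to Proposition~\ref{prop:MonomialPositive}, but now feeding in Theorem~\ref{HtoHL} instead. The basis $\{\mathcal{P}_\alpha\}$ is by definition the dual basis to $\{\QQ'_\alpha\}$ under the pairing between $\Nsym[q]$ and $\Qsym[q]$, so for any $\beta$ the coefficient of $M_\beta$ in $\mathcal{P}_\alpha$ is $\langle \HH_\beta, \mathcal{P}_\alpha\rangle$, using the identity $G = \sum_\beta \langle \HH_\beta, G\rangle M_\beta$ recalled in Section~\ref{sec:nsymqsymcalc}.

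First I would substitute the expansion from Theorem~\ref{HtoHL}, namely $\HH_\beta = \sum_T (1-q)^{n(T)} \QQ'_{\shape(T)}$ with the sum over immaculate tableaux of content $\beta$, into $\langle \HH_\beta, \mathcal{P}_\alpha\rangle$. By bilinearity and $\langle \QQ'_\gamma, \mathcal{P}_\alpha\rangle = \delta_{\gamma,\alpha}$, only the tableaux $T$ with $\shape(T) = \alpha$ survive, giving
\[
\langle \HH_\beta, \mathcal{P}_\alpha\rangle = \sum_{\substack{T:\ \shape(T)=\alpha\\ \content(T)=\beta}} (1-q)^{n(T)}.
\]
Summing $\langle \HH_\beta, \mathcal{P}_\alpha\rangle M_\beta$ over all $\beta$ and reorganizing the double sum by the tableau $T$ rather than by its content then yields $\mathcal{P}_\alpha = \sum_{T:\ \shape(T)=\alpha} (1-q)^{n(T)} M_{\content(T)}$, which is the claimed formula.

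For the interpolation statement I would specialize the formula just proved. At $q=1$ every term with $n(T)>0$ vanishes, and the unique immaculate tableau of shape $\alpha$ with $n(T)=0$ is the one placing $\alpha_i$ copies of $i$ in row $i$ (as noted just before Theorem~\ref{HtoHL}); its content is $\alpha$, so $\mathcal{P}_\alpha|_{q=1} = M_\alpha$. At $q=0$ every factor $(1-q)^{n(T)}$ equals $1$, so $\mathcal{P}_\alpha|_{q=0} = \sum_{T:\ \shape(T)=\alpha} M_{\content(T)} = \sum_{\beta} K_{\alpha,\beta} M_\beta$, which is exactly $\fS_\alpha^*$ by Proposition~\ref{prop:MonomialPositive}. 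I do not anticipate a genuine obstacle here: the only thing to be careful about is that $\{\QQ'_\alpha\}$ is indeed a basis of $\Nsym[q]$ (so that $\mathcal{P}_\alpha$ is well defined), which was established in the preceding theorem, and that the reindexing of the double sum over pairs $(\beta, T)$ is legitimate, which it is since for fixed $\alpha$ there are finitely many immaculate tableaux of shape $\alpha$.
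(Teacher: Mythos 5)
Your proposal is correct and is essentially the paper's own (implicit) argument: the theorem is stated there as a direct duality consequence of Theorem~\ref{HtoHL}, and your computation of $\langle \HH_\beta, \mathcal{P}_\alpha\rangle$ via $\langle \QQ'_\gamma,\mathcal{P}_\alpha\rangle=\delta_{\gamma,\alpha}$, followed by the $q=1$ and $q=0$ specializations of the resulting formula, just spells out that duality. No gaps to report.
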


It is a fundamental combinatorial result of Lascoux and Sch\"utzenberger \cite{LS} 
that the function $Q'_\lambda$ expands positively in the Schur basis:
\[ Q'_\lambda = \sum_T q^{charge(T)} s_{shape(T)},\] the sum over all standard 
Young tableaux of content $\lambda$. We end our paper with a similar 
conjectured expansion of our new lifted Hall-Littlewood basis into the immaculate basis.

\begin{Conjecture} \label{conj:positivity}
If $\lambda$ is a partition then $\QQ_\lambda'$ expands in the immaculate basis $\fS_\beta$ with coefficients which are positive polynomials in $q$. More explicitly, \[\QQ_\lambda' = \sum_{T} q^{st(T)} \fS_{shape(T)},\] for some statistic $st$, over all immaculate tableaux of content $\lambda$.
\end{Conjecture}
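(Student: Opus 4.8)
The plan is to induct on $\ell(\lambda)$ via the creation-operator definition $\QQ'_{[\lambda_1,\widehat\lambda]} = \widetilde{\BB}_{\lambda_1}(\QQ'_{\widehat\lambda})$ of Definition~\ref{def:NCHL}, where $\widehat\lambda = [\lambda_2,\dots,\lambda_m]$ is again a partition; the base case $\QQ'_{[n]} = \fS_n$ is immediate. A preliminary I would establish first is the straightening relation $\fS_{\dots,a,b,\dots} = -\fS_{\dots,b-1,a+1,\dots}$ for immaculate functions: it follows from Theorem~\ref{thm:JT} exactly as in the symmetric-function case, by pairing each $\sigma \in S_m$ with $\sigma$ composed with the adjacent transposition $(i\ i{+}1)$, which flips the sign while preserving the $\HH$-monomial. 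Combining Proposition~\ref{prop:hperponfS} (for $F_r^\perp\fS_\beta$) with $\BB_{m+r}(\fS_\gamma) = \fS_{[m+r,\gamma]}$ then gives
\[
\widetilde{\BB}_m(\fS_\beta) = \sum_{r\geq 0} q^r \sum_{\substack{\gamma\in\ZZ^{\ell(\beta)}\\ j-\ell(\beta)\le \gamma_j\le \beta_j\\ |\gamma|=|\beta|-r}} \fS_{[m+r,\gamma]},
\]
a sum with manifestly nonnegative powers of $q$ and all coefficients $+1$, but indexed by integer tuples $[m+r,\gamma]$ that need not be compositions; re-expressing each term in the immaculate basis (or as $0$, via Lemma~\ref{cor:zeroelts}) through the straightening relation is what introduces signs, and controlling their cancellation is the entire difficulty. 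Note that even though $\lambda_1\geq\widehat\lambda_1$ (since $\lambda$ is a partition), the inner $\fS_\beta$ can have $\beta_1$ arbitrarily large, so straightening genuinely fires.

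Next I would pin down the statistic $st$, which is officially part of the problem. It is heavily constrained: $\QQ'_\lambda|_{q=1} = \HH_\lambda = \sum_\alpha K_{\alpha,\lambda}\fS_\alpha$ forces $\#\{T:\operatorname{content}(T)=\lambda,\ \operatorname{shape}(T)=\alpha\} = K_{\alpha,\lambda}$ with $st$ distributing these; $\QQ'_\lambda|_{q=0} = \fS_\lambda$ forces $st$ to vanish exactly on the superstandard content-$\lambda$ tableau; and $\chi(\QQ'_\lambda) = Q'_\lambda$ together with the Lascoux--Sch\"utzenberger charge formula and $\chi(\fS_\alpha) = s_\alpha$ predicts the generating function $\sum_{\operatorname{shape}(T)=\alpha} q^{st(T)}$ modulo the (understood) relations among the $s_\alpha$ — in particular which positive $\Nsym$-terms must cancel under $\chi$, as already visible in the $\QQ'_{1111}$ example where a $q^3\fS_{13}$ term is needed so that $\chi$ produces the coefficient $q^2+q^4$ of $s_{22}$ in $Q'_{1111}$. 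These constraints (augmented by direct computation in small degree) should determine a charge-type $st$. With $st$ in hand, the proof reduces to verifying the recursion: applying $\widetilde{\BB}_{\lambda_1}$ to $\sum_{T'} q^{st(T')}\fS_{\operatorname{shape}(T')}$ summed over immaculate tableaux of content $\widehat\lambda$, then straightening, should yield $\sum_{T} q^{st(T)}\fS_{\operatorname{shape}(T)}$ summed over immaculate tableaux of content $\lambda$.

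The main obstacle is precisely this last verification: one must exhibit an explicit sign-reversing, $st$-preserving involution that annihilates every negative contribution coming from straightening, and a weight-preserving bijection identifying the survivors with content-$\lambda$ immaculate tableaux. This is delicate because the naive "delete the $1$'s and relabel" restriction from a content-$\lambda$ tableau to a content-$\widehat\lambda$ tableau is not well defined — deleting the prefix $1^{\lambda_1}$ of the first row can leave an entry there that is smaller than the first entry of a lower row, violating the first-column condition — so the tableau set does not decompose cleanly along the recursion. The operator side instead produces a new top row of size $\lambda_1 + r$ whose $r$ extra cells are filled by labels migrated up out of the inner shape (this is the combinatorial shadow of the $F_r^\perp$ factor), so the matching must track this box migration cell by cell, which is where a charge-like statistic naturally enters. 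An alternative route — invert the unitriangular transition of Theorem~\ref{HtoHL} to write $\QQ'_\lambda$ in the $\HH$-basis and then substitute $\HH_\beta = \sum_\alpha K_{\alpha,\beta}\fS_\alpha$ from Proposition~\ref{cor:Hexpansion} — hits the same wall, since inverting a unitriangular matrix introduces signs and positivity again emerges only after cancellation. In every approach, producing the combinatorial mechanism behind that cancellation is the crux, and is why the statement is stated here only as a conjecture.
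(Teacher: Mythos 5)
Before anything else, note what you are comparing against: the paper does not prove this statement. It appears only as Conjecture~\ref{conj:positivity}, supported by computer verification for partitions of size $n\leq 11$, together with the explicit example $\QQ'_{113}$ showing that positivity fails for non-partition compositions. So your plan, which honestly ends by identifying the missing cancellation mechanism, is not in conflict with the paper --- there is no proof to match it against. However, one step you assert as \emph{established} is false, and it would sink any attempt built on it: the straightening relation $\fS_{\dots,a,b,\dots} = -\fS_{\dots,b-1,a+1,\dots}$ does not hold in $\Nsym$. The pairing $\sigma \mapsto \sigma\cdot(i\ i{+}1)$ in Theorem~\ref{thm:JT} does flip the sign, but it does not preserve the $\HH$-monomial: the two affected subscript values reappear in \emph{swapped positions}, and the generators $\HH_i$ do not commute. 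Concretely, with $(a,b)=(1,2)$ your relation forces $\fS_{12}=-\fS_{12}$, i.e.\ $\fS_{12}=0$, whereas $\fS_{12}=\HH_1\HH_2-\HH_2\HH_1\neq 0$; likewise $\fS_{13}$ and $\fS_{22}$ are distinct basis elements, so $\fS_{13}=-\fS_{22}$ is impossible even though $\chi(\fS_{13})=-s_{22}$ after applying the forgetful map.

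This also changes the bookkeeping you describe. In $\Nsym$ any tuple of \emph{positive} integers is already a composition, hence $\fS_{[m+r,\gamma]}$ with all $\gamma_j>0$ is a basis element and needs no straightening, no matter how $\gamma_1$ compares with $m+r$; your remark that straightening ``genuinely fires'' because the inner first part can be large is commutative intuition that does not transfer. Signs can only enter through tuples $\gamma$ with entries $\leq 0$, which Proposition~\ref{prop:hperponfS} does produce, and the rewriting of such terms is governed by subtler relations than an adjacent swap (for instance $\fS_{0,2}=-\fS_{11}$, while $\fS_{0,1,2}=\fS_{12}$ with a plus sign and a drop in length), so any sign-reversing involution would have to be built on those rules, not on the Schur-type exchange. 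Your remaining outline --- pinning down $st$ from the specializations $q=0,1$ and from $\chi(\QQ'_\lambda)=Q'_\lambda$, then verifying the creation-operator recursion tableau-by-tableau --- is a reasonable description of what a proof would require, but it is exactly the part that is open; the paper supplies no such statistic or recursion, only the numerical evidence and the warning example $\QQ'_{113}$, so the crux you flag is indeed unresolved both in your proposal and in the paper.
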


\begin{Example} For $\lambda = [3,3,1]$, the expansion of $\QQ_\lambda'$ corresponds to the number of immaculate tableaux of content $\lambda$.
$$
\begin{array}{cccccc}
&
\tikztableausmall{{1,1,  1},{2,  2,2},  {3}}  &
\tikztableausmall{{1,1,  1},{2,  2,2  ,  3}}  &
\tikztableausmall{{1,1  ,  1,  2},{2,2},  {3}}  &
\tikztableausmall{{1,1,  1,  2},{2,  2,3}}
\tikztableausmall{{1,1,  1,  3},{2,  2,2}}  \\  \QQ'_{331}  =  &
\fS_{331} &+ q\fS_{34} & + q\fS_{421} &+ (q^2+q)\fS_{43}
\end{array}
$$
$$
\begin{array}{ccc}
&
\tikztableausmall{{1,1,  1,2,2},{2},  {3}}&
\tikztableausmall{{1,1,  1,2  ,2},{2  ,  3}}
\tikztableausmall{{1,1  ,  1,  2,  3},{2,2}}\\
&+q^2\fS_{511} &+ (q^3+q^2)\fS_{52} 
\end{array}
$$
$$
\begin{array}{ccc}
&
\tikztableausmall{{1,1,  1,  2,2,2},{3}}
\tikztableausmall{{1,1,  1,  2,2,3},{2}}  &
\tikztableausmall{{1,1,1,2,2,2,3}}\\
&+ (q^4+q^3)\fS_{61} &+
q^5\fS_{7}
\end{array}
$$
\end{Example}

\begin{Remark}
The conjecture has been checked for partitions of size $n \leq 11$. The statement is not true for compositions. The first such example is $\alpha = [1,1,3]$:
\[ \QQ'_{113} = \fS_{113} + q\fS_{122} + q^2\fS_{131} + q^2\fS_{14} +
q^2\fS_{212} + (q^3+q^2-q)\fS_{221}\] \[+ (q^4+q^3)\fS_{23} +
q^2\fS_{311} + (q^5+q^4+q^3-q^2)\fS_{32} + (q^6+q^5)\fS_{41} +
q^7\fS_{5} .\]

This example should be compared to the image $\chi(\QQ'_{113}) = Q'_{113}$ that may be calculated either by
applying the forgetful map to the right hand side or from Definition \ref{def:Qpsym}.
\[ Q'_{113} = \left(q^{3} - q\right)s_{221} + q^{4}s_{311} + \left(q^{5} + q^{4} - q^{2}\right)s_{32} 
+ \left(q^{6} + q^{5}\right)s_{41} + q^{7}s_{5}~. \]
\end{Remark}

\begin{Remark}
The immaculate and the dual immaculate bases is now available in the latest version of Sage. The first, third and fifth authors, with the help of Florent Hivert and Nicolas Thi\'ery, have put the non-commutative and quasi-symmetric functions into Sage.
\end{Remark}

\section*{Appendix: Matrices for $n=4$}

In this appendix we show some examples of the transition matrices.  $M(A,B)$ denotes the transition matrix between bases $A$ and $B$. The rows and columns are indexed by compositions of $4$ in lexicographic order (i.e., $[1,1,1,1]$ is the top row and the leftmost column). In the matrix $M(s,\fS^\ast)$, the rows are indexed by partitions of $4$ again in lexicographic order).

\[
M(H,\fS)=
\left(\begin{array}{cccccccc}
1 & 1 & 2 & 1 & 3 & 3 & 3 & 1 \\
0 & 1 & 1 & 1 & 1 & 2 & 2 & 1 \\
0 & 0 & 1 & 1 & 1 & 2 & 2 & 1 \\
0 & 0 & 0 & 1 & 0 & 1 & 1 & 1 \\
0 & 0 & 0 & 0 & 1 & 1 & 2 & 1 \\
0 & 0 & 0 & 0 & 0 & 1 & 1 & 1 \\
0 & 0 & 0 & 0 & 0 & 0 & 1 & 1 \\
0 & 0 & 0 & 0 & 0 & 0 & 0 & 1
\end{array}\right)
\]

\[
M(\fS,H)=
\left(\begin{array}{cccccccc}
1 & -1 & -1 & 1 & -1 & 1 & 1 & -1 \\
0 & 1 & -1 & 0 & 0 & 0 & 0 & 0 \\
0 & 0 & 1 & -1 & -1 & 0 & 1 & 0 \\
0 & 0 & 0 & 1 & 0 & -1 & 0 & 0 \\
0 & 0 & 0 & 0 & 1 & -1 & -1 & 1 \\
0 & 0 & 0 & 0 & 0 & 1 & -1 & 0 \\
0 & 0 & 0 & 0 & 0 & 0 & 1 & -1 \\
0 & 0 & 0 & 0 & 0 & 0 & 0 & 1
\end{array}\right)
\]

\[
M(R,\fS)=
\left(\begin{array}{cccccccc}
1 & 0 & 0 & 0 & 0 & 0 & 0 & 0 \\
0 & 1 & 1 & 0 & 1 & 0 & 0 & 0 \\
0 & 0 & 1 & 0 & 1 & 1 & 0 & 0 \\
0 & 0 & 0 & 1 & 0 & 1 & 1 & 0 \\
0 & 0 & 0 & 0 & 1 & 0 & 0 & 0 \\
0 & 0 & 0 & 0 & 0 & 1 & 1 & 0 \\
0 & 0 & 0 & 0 & 0 & 0 & 1 & 0 \\
0 & 0 & 0 & 0 & 0 & 0 & 0 & 1
\end{array}\right)
\]

\[
M(\fS,R)=
\left(\begin{array}{cccccccc}
1 & 0 & 0 & 0 & 0 & 0 & 0 & 0 \\
0 & 1 & -1 & 0 & 0 & 1 & -1 & 0 \\
0 & 0 & 1 & 0 & -1 & -1 & 1 & 0 \\
0 & 0 & 0 & 1 & 0 & -1 & 0 & 0 \\
0 & 0 & 0 & 0 & 1 & 0 & 0 & 0 \\
0 & 0 & 0 & 0 & 0 & 1 & -1 & 0 \\
0 & 0 & 0 & 0 & 0 & 0 & 1 & 0 \\
0 & 0 & 0 & 0 & 0 & 0 & 0 & 1
\end{array}\right)
\]

\[
M(\QQ',\fS)=
\left(\begin{array}{cccccccc}
1 & q & q^{2} + q & q^{3} & q^{3} + q^{2} + q &
q^{4} + q^{3} + q^{2} & q^{5} + q^{4} + q^{3} & q^{6} \\
0 & 1 & q & q^{2} & q^{2} & q^{3} + q^{2} &
q^{4} + q^{3} & q^{5} \\
0 & 0 & 1 & q & q & q^{2} + q & q^{3} + q^{2}
& q^{4} \\
0 & 0 & 0 & 1 & 0 & q & q^{2} & q^{3} \\
0 & 0 & 0 & 0 & 1 & q & q^{2} + q & q^{3} \\
0 & 0 & 0 & 0 & 0 & 1 & q & q^{2} \\
0 & 0 & 0 & 0 & 0 & 0 & 1 & q \\
0 & 0 & 0 & 0 & 0 & 0 & 0 & 1
\end{array}\right)
\]

\[
M(\fS,\QQ')=
\left(\begin{array}{cccccccc}
1 & - q & - q & q^{2} & - q & q^{2} & q^{2}
& - q^{3} \\
0 & 1 & - q & 0 & 0 & 0 & 0 & 0 \\
0 & 0 & 1 & - q & - q & q^{2} -  q & q^{2} &
0 \\
0 & 0 & 0 & 1 & 0 & - q & 0 & 0 \\
0 & 0 & 0 & 0 & 1 & - q & - q & q^{2} \\
0 & 0 & 0 & 0 & 0 & 1 & - q & 0 \\
0 & 0 & 0 & 0 & 0 & 0 & 1 & - q \\
0 & 0 & 0 & 0 & 0 & 0 & 0 & 1
\end{array}\right)
\]

\[
M(\QQ',H)=
\]

\noindent
\hspace{-1in}
$
\begin{small}\left(\begin{array}{cccccccc}
1 & q - 1 & q^{2} - 1 & q^{3} -  q^{2} -  q + 1 & q^{3}
- 1 & q^{4} -  q^{3} -  q + 1 & q^{5} -  q^{3} -  q^{2} + 1
& q^{6} -  q^{5} -  q^{4} + q^{2} + q - 1 \\
0 & 1 & q - 1 & q^{2} -  q & q^{2} -  q & q^{3} -
q^{2} & q^{4} - 2 q^{2} + q & q^{5} -  q^{4} -  q^{3} + q^{2} \\
0 & 0 & 1 & q - 1 & q - 1 & q^{2} -  q & q^{3} -
2 q + 1 & q^{4} -  q^{3} -  q^{2} + q \\
0 & 0 & 0 & 1 & 0 & q - 1 & q^{2} -  q &
q^{3} -  q^{2} \\
0 & 0 & 0 & 0 & 1 & q - 1 & q^{2} - 1 &
q^{3} -  q^{2} -  q + 1 \\
0 & 0 & 0 & 0 & 0 & 1 & q - 1 & q^{2} -  q
\\
0 & 0 & 0 & 0 & 0 & 0 & 1 & q - 1 \\
0 & 0 & 0 & 0 & 0 & 0 & 0 & 1
\end{array}\right)\end{small}
$

\[
M(H,\QQ')=
\]

\noindent
$
\begin{small}
\left(\begin{array}{cccccccc}
1 & - q + 1 & -2 q + 2 & q^{2} - 2 q + 1 & -3 q + 3
& 3 q^{2} - 6 q + 3 & 3 q^{2} - 6 q + 3 & - q^{3} + 3 q^{2}
- 3 q + 1 \\
0 & 1 & - q + 1 & - q + 1 & - q + 1 & q^{2} - 3 q +
2 & q^{2} - 3 q + 2 & q^{2} - 2 q + 1 \\
0 & 0 & 1 & - q + 1 & - q + 1 & q^{2} - 3 q + 2
& q^{2} - 3 q + 2 & q^{2} - 2 q + 1 \\
0 & 0 & 0 & 1 & 0 & - q + 1 & - q + 1 & - q
+ 1 \\
0 & 0 & 0 & 0 & 1 & - q + 1 & -2 q + 2 &
q^{2} - 2 q + 1 \\
0 & 0 & 0 & 0 & 0 & 1 & - q + 1 & - q + 1 \\
0 & 0 & 0 & 0 & 0 & 0 & 1 & - q + 1 \\
0 & 0 & 0 & 0 & 0 & 0 & 0 & 1
\end{array}\right)\end{small}
$

\[
M(\QQ',R)= \linebreak
\left(\begin{array}{cccccccc}
1 & q & q^{2} & q^{3} & q^{3} & q^{4} & q^{5}
& q^{6} \\
0 & 1 & q - 1 & q^{2} & q^{2} -  q & q^{3} -  q + 1
& q^{4} -  q^{2} + q - 1 & q^{5} \\
0 & 0 & 1 & q & q - 1 & q^{2} - 1 & q^{3} -  q +
1 & q^{4} \\
0 & 0 & 0 & 1 & 0 & q - 1 & q^{2} -  q &
q^{3} \\
0 & 0 & 0 & 0 & 1 & q & q^{2} & q^{3} \\
0 & 0 & 0 & 0 & 0 & 1 & q - 1 & q^{2} \\
0 & 0 & 0 & 0 & 0 & 0 & 1 & q \\
0 & 0 & 0 & 0 & 0 & 0 & 0 & 1
\end{array}\right)
\]

\[
M(R,\QQ')= \linebreak
\left(\begin{array}{cccccccc}
1 & - q & - q & q^{2} & - q & q^{2} & q^{2}
& - q^{3} \\
0 & 1 & - q + 1 & - q & - q + 1 & q^{2} - 2 q &
q^{2} -  q & q^{2} \\
0 & 0 & 1 & - q & - q + 1 & q^{2} - 2 q + 1 &
q^{2} - 2 q & q^{2} \\
0 & 0 & 0 & 1 & 0 & - q + 1 & - q + 1 & - q
\\
0 & 0 & 0 & 0 & 1 & - q & - q & q^{2} \\
0 & 0 & 0 & 0 & 0 & 1 & - q + 1 & - q \\
0 & 0 & 0 & 0 & 0 & 0 & 1 & - q \\
0 & 0 & 0 & 0 & 0 & 0 & 0 & 1
\end{array}\right)
\]

\[
M(s,\fS^*)=
\left(\begin{array}{cccccccc}
1 & 0 & 0 & 0 & 0 & 0 & 0 & 0 \\
0 & 0 & 0 & 0 & 1 & 0 & 0 & 0 \\
0 & 0 & 0 & -1 & 0 & 1 & 0 & 0 \\
0 & 0 & 0 & 0 & 0 & 0 & 1 & 0 \\
0 & 0 & 0 & 0 & 0 & 0 & 0 & 1
\end{array}\right)
\]

\end{document}